\newtheorem{thm}{Theorem}[section]
\newtheorem{lem}[thm]{Lemma}
\newtheorem{cor}[thm]{Corollary}
\theoremstyle{definition}
\newtheorem{defn}[thm]{Definition}
\newtheorem{rem}[thm]{Remark}
\title{Invariant of Binary Forms}
\author{Vishwanath Krishnamoorthy}
\address{1300, Escorial Place, \# 207 Palm Beach Gardens, FL, 33410}
\email{vish\_w\_a@yahoo.com}
\author{Tanush Shaska}
\address{Department of Mathematics, University of Idaho, Moscow, ID, 83843.}
\email{tshaska@uidaho.edu}
\author{Helmut V\"olklein}
\address{Department of Mathematics, University of Florida, Gainesville, FL, 32611.}
\email{helmut@math.ufl.edu}
\begin{document}

\def\Z{\mathbb Z}
\def\cB{\mathcal B}
\def\cA{\mathcal A}
\def\C{\mathbb C}
\def\w{\widetilde}

\maketitle

\begin{abstract}
Basic invariants of binary forms over $\mathbb C$ up to degree 6 (and lower degrees) were constructed by Clebsch
and Bolza in the 19-th century using complicated symbolic calculations. Igusa extended this to algebraically
closed fields of any characteristic using difficult techniques of algebraic geometry. In this paper a simple proof
is supplied that works in characteristic $p > 5$ and uses some concepts of invariant theory developed by Hilbert
(in characteristic 0) and Mumford, Haboush et al. in positive characteristic. Further the analogue for  pairs of
binary cubics is also treated.
\end{abstract}

\section{Introduction}

Let $k$ be  an algebraically closed field of  characteristic not equal to  2.   A binary  form  of degree  $d$  is
a homogeneous  polynomial $f(X,Y)$ of  degree $d$ in two  variables over $k$.  Let  $V_d$ be the $k$- vector space
of binary  forms of degree $d$.  The group $GL_2(k)$ of  invertible  $2  \times 2$  matrices  over  $k$  acts on
$V_d$  by coordinate  change. Many  problems  in algebra  involve properties  of binary forms  which are invariant
under these  coordinate changes.  In particular, any  genus 2 curve over  $k$ has a  projective equation of the
form  $Z^2Y^4 = f(X,Y)$,  where $f$ is  a binary sextic  (= binary form  of degree  6)  of  non-zero discriminant.
Two  such curves  are isomorphic  if and  only if  the corresponding  sextics  are conjugate under $GL_2(k)$.
Therefore the moduli  space $\mathcal M_2$ of genus 2 curves  is the affine  variety whose  coordinate ring  is
the  ring of $GL_2(k)$-invariants in the coordinate ring  of the set of elements of $V_6$ with non-zero
discriminant.

Generators for this  and similar invariant rings in  lower degree were constructed by  Clebsch, Bolza  and others
in  the last  century using complicated calculations.  For the case of sextics,  Igusa \cite{Ig} extended  this to
algebraically closed  fields of  any characteristic using  difficult techniques  of algebraic  geometry. Igusa's
paper is very difficult to read and has some proofs only sketched. It is mostly the case of characteristic 2 which
complicates his paper.

Hilbert  \cite{Hi}  developed some  general, purely  algebraic tools (see Theorem 1 and Theorem 2 below) in
invariant theory. Combined with the linear reductivity of  $GL_2(k)$ in characteristic 0, this permits a  more
conceptual proof  of the  results of  Clebsch [${\bf  2}$] and Bolza  \cite{Bo}. After  Igusa's  paper appeared,
the concept  of geometric reductivity  was developed by Mumford \cite{Mu1}, Haboush \cite{Ha}  and others. In
particular it was proved  that reductive algebraic   groups    in   any   characteristic    are geometrically
reductive. This allows  application  of  Hilbert's  methods  in  any characteristic. For example, Hilbert's
finiteness theorem (see Theorem 1  below)  was  extended   to  any  characteristic  by  Nagata  \cite{Na}. Here we
give a proof of the Clebsch-Bolza-Igusa  result along those lines. The proof is  elementary in characteristic 0,
and extends to characteristic  $p >  5$  by  quoting  the respective  results  on geometric reductivity. This is
contained in sections 2 and 3.

In section 4  we treat the analogue for invariants  of pairs of binary cubics. To  our knowledge this has  not
been worked out  before.

\section{Invariants of Binary Forms}

In this chapter we define the action of $ GL_2(k)$ on binary forms and discuss the basic notions of their
invariants. Throughout this chapter $k$ denotes an algebraically closed field.

\subsection{Action of $GL_2(k)$ on binary forms.}

Let $k\, [X, Y]$  be the  polynomial ring in  two variables and  let $V_d$ denote  the  $d+1$-dimensional subspace
of $k\, [X, Y]$  consisting  of homogeneous polynomials.
\begin{equation}\label{eq1}
f(X,Y) = a_0X^d + a_1X^{d-1}Y +  \dots  + a_dY^d
\end{equation}
of  degree $d$. Elements  in $V_d$  are called  {\it binary  forms} of degree $d$.

We let $GL_2(k)$ act as a group of automorphisms on $ k\, [X,Y] $ as follows: if
$$ g = \begin{pmatrix} a & b \\ c & d \end{pmatrix}
\in GL_2(k) $$ then
\begin{equation}\label{eq2}
\begin{split}
g (X) = aX + bY \\
g (Y) = cX + dY
\end{split}
\end{equation}
This action of $GL_2(k)$  leaves $V_d$ invariant and acts irreducibly on $V_d$.

\begin{rem}\label{rem1}
It is well  known that $SL_2(k)$ leaves a bilinear  form (unique up to scalar multiples) on $V_d$ invariant. This
form is symmetric if $d$ is even and skew symmetric if $d$ is odd.
\end{rem}

Let $A_0$, $A_1$,  \dots ,   $A_d$ be coordinate  functions on $V_d$. Then the coordinate  ring of $V_d$ can be
identified with $ k\, [A_0  ,    \dots  ,   A_d] $. For $I \in k\, [A_0,  \dots  ,   A_d]$ and $g \in GL_2(k)$,
define $I^g \in k\, [A_0,  \dots  ,   A_d]$ as follows
\begin{equation}\label{eq3}
{I^g}\, (f) = I \, ( g ( f ) )
\end{equation}
for all $f \in V_d$. Then  $I^{gh} = (I^{g})^{h}$ and Eq.~\eqref{eq3} defines an action of $GL_2(k)$ on $k\, [A_0,
\dots  ,   A_d ]$.

\begin{defn}
Let  $\mathcal R_d$  be the  ring of  $SL_2(k)$ invariants  in $k\, [A_0,  \dots  ,   A_d]$, i.e., the ring of all
$I \in k \, [A_0,  \dots  ,   _d]$ with $I^g = I$ for all $g \in SL_2(k)$.
\end{defn}

Note that if $I$ is an invariant, so are all its homogeneous components. So $\mathcal  R_d$ is  graded by  the
usual degree  function on  $k\, [A_0,  \dots  ,   A_d]$.

Since $k$ is algebraically closed, the binary form $f(X,Y)$ in  Eq.~\eqref{eq1} can be factored as
\begin{equation} \label{eq4}
f(X,Y)  = (y_1  X  -  x_1 Y) \cdots (y_d  X  - x_d  Y) = \displaystyle \prod_{1 \leq  i \leq  d} \det
\left(\begin{pmatrix} X & x_{i} \\ Y & y_i
\end{pmatrix} \right)
\end{equation}
The points  with homogeneous coordinates $(x_i, y_i)  \in \mathbb P^1$ are  called the  roots  of the  binary form
\eqref{eq1}.  Thus  for $g  \in GL_2(k)$ we have
$$g\left (f(X,Y) \right )  = ( \det(g))^{d}  (y_1^{'}  X -  x_1^{'}  Y) \cdots (y_d^{'} X  - x_d^{'} Y),$$
where
\begin{equation}
\begin{pmatrix}   x_i^{'}  \\  y_i^{'}  \end{pmatrix} = g^{-1}  \begin{pmatrix} x_i\\ y_i \end{pmatrix}.
\end{equation}

\subsection{The Null Cone of \, $V_d$}

\begin{defn}
The  null cone  $N_d$ of  $V_d$  is the  zero set  of all  homogeneous elements in $\mathcal R_d$ of positive
degree
\end{defn}

\begin{lem}\label{lem1}
Let $char(k)  = 0$  and $\Omega_s$  be the subspace  of $k\, [A_0,   \dots  ,   A_d]$ consisting of homogeneous
elements of degree $s$. Then there is a $k$-linear  map $R :  k\, [A_0,  \dots  ,    A_d] \to \mathcal R_d$  with
the following properties:

\smallskip

(a) $R(\Omega_s) \subseteq  \Omega_s$ for all $s$

(b) $R(I) = I$ for all $I \in \mathcal R_d$

(c) $R(g(f)) = R(f)$ for all $f \in k\, [A_0,  \dots  ,   A_d]$
\end{lem}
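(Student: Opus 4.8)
The plan is to construct $R$ as the Reynolds operator associated to the linearly reductive group $SL_2(k)$, assembling it degree by degree from projections onto invariants. Since the $GL_2(k)$-action on $k[A_0,\dots,A_d]$ preserves each homogeneous component, each $\Omega_s$ is a finite-dimensional rational $SL_2(k)$-module, and it suffices to produce on every $\Omega_s$ a projection $R_s$ onto the invariant subspace $\Omega_s \cap \mathcal R_d$ and then set $R = \bigoplus_s R_s$. Property (a) then holds by construction, since $k[A_0,\dots,A_d] = \bigoplus_s \Omega_s$ as graded pieces.

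The key input is that in characteristic $0$ the group $SL_2(k)$ is linearly reductive, so every finite-dimensional rational representation is completely reducible. Applying this to $\Omega_s$ and grouping the irreducible constituents by isomorphism type yields the canonical isotypic decomposition $\Omega_s = (\Omega_s \cap \mathcal R_d) \oplus W_s$, where $\Omega_s \cap \mathcal R_d$ is the trivial isotypic component (the $SL_2(k)$-invariants of degree $s$) and $W_s$ is the sum of all non-trivial isotypic components. Because the isotypic decomposition is intrinsic, both summands are $SL_2(k)$-stable and the splitting is independent of any choice of bases; I define $R_s \colon \Omega_s \to \Omega_s \cap \mathcal R_d$ to be the projection with kernel $W_s$.

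It remains to check (b) and (c). Property (b) is immediate: if $I \in \mathcal R_d$, its homogeneous components lie in the spaces $\Omega_s \cap \mathcal R_d$, on each of which $R_s$ restricts to the identity. For (c), fix $g \in SL_2(k)$ and write $f \in \Omega_s$ as $f = f_0 + w$ with $f_0 \in \Omega_s \cap \mathcal R_d$ and $w \in W_s$. Since $W_s$ is $SL_2(k)$-stable we have $g(w) \in W_s$, while $g(f_0) = f_0$; hence $g(f) = f_0 + g(w)$ and $R_s(g(f)) = f_0 = R_s(f)$. Summing over $s$ gives (c) for arbitrary $f \in k[A_0,\dots,A_d]$.

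The main obstacle is the appeal to complete reducibility: this is precisely where the hypothesis $char(k)=0$ is essential, as it guarantees both the splitting $\Omega_s = (\Omega_s \cap \mathcal R_d) \oplus W_s$ and, through the canonicity of the isotypic decomposition, the $SL_2(k)$-equivariance that forces $R$ to be well defined and to satisfy (c). In positive characteristic this argument breaks down, since one has only geometric reductivity, which does not furnish a linear Reynolds projection; this is exactly the difficulty the later sections must circumvent by other means.
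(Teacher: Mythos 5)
Your proposal is correct and follows essentially the same route as the paper: both invoke linear reductivity of $SL_2(k)$ in characteristic $0$ to split each $\Omega_s$ as $(\Omega_s \cap \mathcal R_d) \oplus W_s$ with $W_s$ an $SL_2(k)$-stable complement, and define $R$ as the resulting projection onto the invariants. Your only additions are cosmetic refinements the paper leaves implicit --- taking $W_s$ canonically as the sum of nontrivial isotypic components, and writing out the verification of (a), (b), (c) that the paper dismisses as ``clear from the definition of $R$.''
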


\begin{proof} $\Omega_s$ is a  polynomial module of degree $s$  for $SL_2(k)$. Since $SL_2(k)$  is linearly  reductive
in $char(k)  = 0$,  there exists  a $SL_2(k)$-invariant  subspace  $\Lambda_s$  of  $\Omega_s$  such  that
$\Omega_s = (\Omega_s \cap  \mathcal R_d) \bigoplus \Lambda_s$. Define $R : k\, [A_0,   \dots  ,   A_d] \to
\mathcal R_d $ as $R(\Lambda_s)  = 0$ and $R_{|\Omega_s \cap \mathcal R_d} = id$. Then $R$ is $k$-linear and the
rest of the proof is clear from the definition of $R$.

\end{proof}

\noindent The map $R$ is called the {\bf Reynold's operator}.

\begin{lem}\label{lem2}
Suppose $char(k) = 0$.  Then every maximal ideal in $ \mathcal R_d$ is contained in a maximal ideal of $k\, [A_0,
\dots  ,   A_d]$.
\end{lem}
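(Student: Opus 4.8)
The plan is to reduce the statement to showing that the extension of $\mathfrak m$ to $S := k[A_0,\dots,A_d]$ stays proper. Write $\mathfrak m S$ for the ideal of $S$ generated by a maximal ideal $\mathfrak m$ of $\mathcal R_d$. If $\mathfrak m S \ne S$, then $\mathfrak m S$ is contained in some maximal ideal $\mathfrak M$ of $S$ (every proper ideal of a ring with $1$ lies in a maximal ideal, by Zorn's lemma), and since $\mathfrak m \subseteq \mathfrak m S \subseteq \mathfrak M$ this $\mathfrak M$ is the desired maximal ideal. Conversely, $\mathfrak m \subseteq \mathfrak M$ forces $\mathfrak m S \subseteq \mathfrak M \ne S$. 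So it suffices to prove $\mathfrak m S \ne S$ for every maximal ideal $\mathfrak m$ of $\mathcal R_d$.

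To prove $\mathfrak m S \ne S$, I would argue by contradiction using the Reynolds operator $R$ of Lemma \ref{lem1}. Suppose $\mathfrak m S = S$. Then $1 = \sum_i a_i f_i$ for some $a_i \in \mathfrak m \subseteq \mathcal R_d$ and $f_i \in S$. Applying $R$ and using that $R$ fixes $\mathcal R_d$ (so $R(1)=1$) together with the module identity $R(a f) = a\, R(f)$, valid for every invariant $a \in \mathcal R_d$ and every $f \in S$, we obtain $1 = \sum_i a_i\, R(f_i)$. But each $R(f_i)$ lies in $\mathcal R_d$ and each $a_i$ lies in $\mathfrak m$, so the right-hand side lies in $\mathfrak m$, forcing $1 \in \mathfrak m$. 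This contradicts the properness of the maximal ideal $\mathfrak m$, and hence $\mathfrak m S \ne S$.

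The one point that needs care -- and the main obstacle -- is the Reynolds identity $R(a f) = a\, R(f)$ for $a \in \mathcal R_d$, which is not immediate from the bare properties (a)--(c). To secure it I would refine the decomposition $\Omega_s = (\Omega_s \cap \mathcal R_d) \oplus \Lambda_s$ of Lemma \ref{lem1} by taking $\Lambda_s$ to be the sum of all \emph{nontrivial} $SL_2(k)$-isotypic components of $\Omega_s$, so that $\Omega_s \cap \mathcal R_d$ is precisely the trivial isotypic component. For homogeneous $a \in \mathcal R_d$ of degree $t$, multiplication by $a$ is an $SL_2(k)$-equivariant map $\Omega_s \to \Omega_{s+t}$ (since $g(af)=a\,g(f)$), and an equivariant map carries each isotypic component into the component of the same type; thus it sends $\Lambda_s$ into $\Lambda_{s+t}$ and $\Omega_s\cap\mathcal R_d$ into $\Omega_{s+t}\cap\mathcal R_d$. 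Writing $f = f_0 + f_1$ with $f_0 \in \Omega_s\cap\mathcal R_d$ and $f_1\in\Lambda_s$ then gives $R(af)=af_0=a\,R(f)$, and the general case follows by decomposing $a$ and $f$ into homogeneous components and using the $k$-linearity of $R$. Once this identity is in hand, the argument above closes the proof.
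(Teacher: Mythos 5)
Your proof is correct and follows essentially the same route as the paper: both reduce to showing the extended ideal stays proper, write $1 = \sum_i m_i f_i$, apply the Reynolds operator, and conclude $1 \in \mathfrak m$, a contradiction. The only difference is that you explicitly verify the module identity $R(af) = a\,R(f)$ for invariant $a$ (via the isotypic decomposition), a property the paper's Lemma~\ref{lem1} does not state and its proof of Lemma~\ref{lem2} uses tacitly; this fills in a detail rather than changing the argument.
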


\begin{proof}
If $\mathcal I$ is a maximal ideal in $ \mathcal R_d $ which generates the unit ideal of
 $ k\, [A_0, $ $\dots  ,   A_d]$, then there exist $m_1,   \dots ,   m_t \in \mathcal I$ and $f_1$, $f_2$,  \dots
, $f_t \in k\, [A_0,  \dots  ,   A_d]$ such that
$$1 = m_1 f_1 +  \dots  + m_t f_t$$
Applying the Reynold's operator to the above equation we get
$$1 = m_1 \, R(f_1) +  \dots  + m_t \, R(f_t)$$
But  $R(f_i)  \in \mathcal  R_d$  for all  $i$.  This  implies $1  \in \mathcal I$, a contradiction.

\end{proof}

\begin{thm}\label{thm1} (\textbf{Hilbert's Finiteness  Theorem})  Suppose $char(k)  = 0$.  Then $\mathcal R_d$ is finitely generated over $k$.
\end{thm}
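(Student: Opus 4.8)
The plan is to deduce finite generation from the Hilbert basis theorem together with the Reynolds operator $R$ of Lemma~\ref{lem1}. Write $S = k[A_0, \dots, A_d]$, and let $\mathcal R_d^+$ denote the set of invariants of strictly positive degree. First I would form the \emph{Hilbert ideal} $\mathcal H = \mathcal R_d^+ \cdot S$, the ideal of $S$ generated by all homogeneous invariants of positive degree. Since $S$ is a polynomial ring over a field it is Noetherian, so $\mathcal H$ is finitely generated; moreover, because $\mathcal H$ is generated by homogeneous invariants, I may choose generators $I_1, \dots, I_n$ that are themselves homogeneous elements of $\mathcal R_d^+$, say of degrees $d_1, \dots, d_n \geq 1$.

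The claim is then that $\mathcal R_d = k[I_1, \dots, I_n]$. Since every invariant is a sum of its homogeneous components, which are again invariant (the grading of $\mathcal R_d$ noted after its definition), it suffices to show that every homogeneous invariant $I$ of degree $s$ lies in $k[I_1, \dots, I_n]$, and I would argue this by induction on $s$. For $s = 0$ we have $I \in k$ and there is nothing to prove. For $s > 0$ we have $I \in \mathcal R_d^+ \subseteq \mathcal H$, so I can write
\begin{equation*}
I = \sum_{j=1}^n h_j I_j, \qquad h_j \in S,
\end{equation*}
and after discarding terms of the wrong degree I may assume each $h_j$ is homogeneous of degree $s - d_j < s$.

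The decisive step is to apply the Reynolds operator to this identity. Using $R(I) = I$ from part (b) of Lemma~\ref{lem1} and the $k$-linearity of $R$, I obtain $I = \sum_j R(h_j I_j)$. Here I need the \emph{Reynolds identity} $R(h_j I_j) = R(h_j)\, I_j$, valid because $I_j$ is invariant: multiplication by $I_j$ is an $SL_2(k)$-module homomorphism, so it carries the nontrivial isotypic complement $\Lambda_{s-d_j}$ into $\Lambda_s$, which is exactly what makes $R$ a homomorphism of $\mathcal R_d$-modules. Granting this, $I = \sum_j R(h_j)\, I_j$, and by part (a) each $R(h_j)$ is a homogeneous invariant of degree $s - d_j < s$. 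By the induction hypothesis every $R(h_j) \in k[I_1, \dots, I_n]$, hence so is $I$, completing the induction and the proof.

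I would flag that the only point requiring care beyond Lemma~\ref{lem1} is this Reynolds identity $R(hI) = R(h)\, I$ for invariant $I$: it does not formally follow from (a)--(c) alone, but rather from the construction of $R$ via the decomposition $\Omega_s = (\Omega_s \cap \mathcal R_d) \oplus \Lambda_s$, once $\Lambda_s$ is taken to be the sum of the nontrivial irreducible submodules. This is the main conceptual obstacle; everything else is the standard Noetherian-plus-induction packaging, and the hypothesis $char(k) = 0$ enters only through the existence of $R$, i.e. the linear reductivity of $SL_2(k)$, which is precisely what must later be replaced by geometric reductivity in characteristic $p > 5$.
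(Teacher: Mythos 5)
Your proof is correct and takes essentially the same route as the paper's: form the ideal of $k[A_0,\dots,A_d]$ generated by all homogeneous invariants of positive degree, extract finitely many homogeneous invariant generators by Noetherianity, and induct on degree using the Reynolds operator of Lemma~\ref{lem1}. The only difference is that you make explicit the module identity $R(h_j I_j) = R(h_j)\, I_j$, which the paper uses silently in passing from $J = \sum f_i J_i$ to $J = \sum R(f_i) J_i$; your observation that this needs the canonical (isotypic) choice of complement $\Lambda_s$, rather than following formally from properties (a)--(c), is a legitimate and worthwhile refinement.
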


\begin{proof}
Let $\mathcal I_0$ be the ideal in $k\, [A_0,  \dots  ,   A_d]$ generated by all homogeneous invariants of
positive degree. Because $k\, [A_0,  \dots  ,  A_d]$ is Noetherian,  there exist finitely many  homogeneous
elements $J_1,
 \dots  ,   J_r$  in $\mathcal  R_d$   such  that  $\mathcal  I_0  =  (J_1,  \dots  ,  J_r)$.
We prove $\mathcal R_d =  k\, [J_1,  \dots  ,   J_r]$. Let $J \in \mathcal R_d$  be homogeneous  of degree $d$. We
prove $J  \in k\, [J_1, \dots ,   J_r]$ using induction on $d$.  If $d = 0$, then $J \in k \subset k\, [J_1, \dots
,   J_r]$. If $d > 0$, then
\begin{equation}\label{eq_6}
J = f_1 \, J_1 +  \dots  + f_r \, J_r
\end{equation}
with $f_i  \in k\, [A_0,  \dots  ,    A_d]$ homogeneous and $deg(f_i)  < d$ for all $i$. Applying the Reynold's
operator to Eq.~\eqref{eq_6} we have
$$J = R(f_1) J_1 +  \dots  + R(f_r) J_r$$
then by  Lemma 1 $R(f_i)$ is  a homogeneous element  in $\mathcal R_d$ with $deg(R(f_i))  < d $  for all $i$  and
hence by induction  we have $R(f_i) \in k\, [J_1,  \dots   ,   J_r]$ for all $i$. Thus $J  \in k\, [J_1,  \dots  ,
J_r]$.

\end{proof}

If $k$ is of arbitrary characteristic, then $SL_2(k)$ is geometrically reductive,  which is a  weakening of linear
reductivity; see Haboush \cite{Ha}. It suffices to prove Hilbert's finiteness theorem in any characteristic; see
Nagata \cite{Na}. The following theorem is also due to Hilbert.

\begin{thm}\label{thm2}
Let $I_1$,  $I_2$,  \dots ,    $I_s$ be homogeneous  elements in  $ \mathcal R_d$ whose common zero set equals the
null cone $\mathcal N_d$. Then $ \mathcal R_d$  is finitely  generated as a  module over $k\, [I_1,   \dots  ,
I_s]$.
\end{thm}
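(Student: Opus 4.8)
The plan is to write $A=k[I_1,\dots,I_s]$ for the graded subalgebra of $\mathcal R_d$ generated by the $I_j$, and to reduce the module-finiteness statement to a single radical-ideal inclusion. Concretely, let $\mathcal R_d^+$ denote the irrelevant ideal of all positive-degree invariants and let $\mathfrak a=(I_1,\dots,I_s)\,\mathcal R_d$ be the ideal they generate in $\mathcal R_d$. I would first establish
$$\mathcal R_d^+\subseteq\sqrt{\mathfrak a}\,,$$
and then deduce finiteness by a graded Nakayama argument. Since the $I_j$ have positive degree the reverse inclusion $\mathfrak a\subseteq\mathcal R_d^+$ is automatic, so in fact $\sqrt{\mathfrak a}=\mathcal R_d^+$.

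For the inclusion, fix a homogeneous $J\in\mathcal R_d^+$. By hypothesis the common zero set of $I_1,\dots,I_s$ in $V_d$ is $\mathcal N_d$, and by definition of the null cone $J$ vanishes on $\mathcal N_d$; hence $J$ vanishes on the common zero set of $I_1,\dots,I_s$ in $V_d=\mathrm{Spec}\,k[A_0,\dots,A_d]$. Hilbert's Nullstellensatz then yields $J^N=\sum_{j} g_j\,I_j$ with $g_j\in k[A_0,\dots,A_d]$ for some $N$. The point is to replace the $g_j$ by invariants. In characteristic $0$ I would apply the Reynolds operator $R$ of Lemma~\ref{lem1}: using $R(J^N)=J^N$ together with the identity $R(a\,f)=a\,R(f)$ for invariant $a$ (valid because $R$ is the $SL_2(k)$-equivariant projection onto invariants), one gets $J^N=\sum_j R(g_j)\,I_j$ with $R(g_j)\in\mathcal R_d$, so $J^N\in\mathfrak a$ and $J\in\sqrt{\mathfrak a}$. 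In characteristic $p>5$ no linear Reynolds operator is available; instead I would invoke geometric reductivity of $SL_2(k)$ (Haboush \cite{Ha}), which guarantees that the invariant $J^N$, lying in the extension of $\mathfrak a$ to $k[A_0,\dots,A_d]$, has some further power $J^{Nq}$ lying in $\mathfrak a$ itself. Either way $\mathcal R_d^+\subseteq\sqrt{\mathfrak a}$.

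With this in hand the finiteness is formal. By Theorem~\ref{thm1} (extended to characteristic $p$ as noted after its proof) $\mathcal R_d$ is a finitely generated, non-negatively graded $k$-algebra with $(\mathcal R_d)_0=k$, hence Noetherian. Consider the graded quotient $\overline{\mathcal R_d}=\mathcal R_d/\mathfrak a$: its irrelevant ideal is the image of $\mathcal R_d^+$, which by the previous step is a nil ideal, hence nilpotent in the Noetherian ring $\overline{\mathcal R_d}$. A finitely generated graded $k$-algebra with nilpotent irrelevant ideal is finite dimensional over $k$, so I may choose homogeneous $w_1,\dots,w_t\in\mathcal R_d$ whose images form a $k$-basis of $\mathcal R_d/\mathfrak a=\mathcal R_d\otimes_A k$. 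The graded Nakayama lemma applied to the bounded-below graded $A$-module $\mathcal R_d$ then shows that $w_1,\dots,w_t$ generate $\mathcal R_d$ as an $A$-module, which is exactly the assertion.

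The graded Nakayama step is routine; the real content sits in the inclusion $\mathcal R_d^+\subseteq\sqrt{\mathfrak a}$, and more precisely in the passage from $J^N\in\mathfrak a\,k[A_0,\dots,A_d]$ to a power of $J$ lying in $\mathfrak a$ itself. In characteristic $0$ this is a one-line consequence of the Reynolds operator, but in characteristic $p$ it is exactly where the deep input---Haboush's theorem that $SL_2(k)$ is geometrically reductive---must be quoted, and I expect this to be the main obstacle to a self-contained argument.
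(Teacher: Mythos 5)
Your proposal is correct, and it shares the paper's overall skeleton: both proofs reduce everything to showing that the ideal $\mathfrak{a}=(I_1,\dots,I_s)\mathcal R_d$ has radical equal to the irrelevant ideal $\mathcal I_0=\mathcal R_d^{+}$, and then conclude module-finiteness by an induction on degree (both steps relying on Theorem \ref{thm1}, extended to characteristic $p$ via Nagata). But each half is carried out differently. For the radical identity, the paper never invokes the Nullstellensatz in $k[A_0,\dots,A_d]$; it applies the Nullstellensatz to the finitely generated algebra $\mathcal R_d$ itself, by showing $\mathcal I_0$ is the \emph{only} maximal ideal of $\mathcal R_d$ containing $I_1,\dots,I_s$, and this is where Lemma \ref{lem2} enters (lifting a maximal ideal of $\mathcal R_d$ to one of $k[A_0,\dots,A_d]$ --- Reynolds operator in characteristic $0$, surjectivity of $V_d \to V_d$ // $SL_2(k)$, i.e.\ geometric reductivity, in characteristic $p$). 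You instead apply the Nullstellensatz upstairs and descend the certificate $J^N=\sum_j g_jI_j$: by the Reynolds identity $R(aF)=aR(F)$ in characteristic $0$ (a property not stated in Lemma \ref{lem1}, but your use of it is on exactly the same footing as the paper's own implicit use in Lemma \ref{lem2} and Theorem \ref{thm1}), and by Nagata's power lemma from geometric reductivity in characteristic $p$. For the conclusion, the paper takes algebra generators $J_1,\dots,J_r$ of $\mathcal R_d$, notes $J_i^q\in\mathfrak{a}$, and shows the monomials $J_1^{i_1}\cdots J_r^{i_r}$ with all $i_\nu<q$ generate $\mathcal R_d$ over $k[I_1,\dots,I_s]$ by induction on degree; your route --- nil irrelevant ideal in the Noetherian quotient $\mathcal R_d/\mathfrak{a}$ is nilpotent, so the quotient is finite-dimensional, then graded Nakayama --- is the same induction in more standard commutative-algebra packaging. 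What your arrangement buys is modularity: the characteristic-$p$ input is isolated in a single quotable lemma about powers of invariants. What the paper's buys is an explicit module generating set and a one-line characteristic-$p$ patch (``the same proof works if Lemma \ref{lem2} holds''), keeping all group-theoretic input confined to that lemma.
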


\begin{proof}
(i) $char(k)  = 0$: By Theorem \ref{thm1}  we have $\mathcal R_d  = k\, [J_1, J_2,  \dots  ,    J_r]$ for  some
homogeneous invariants  $J_1$,  \dots   ,  $J_r$. Let $\mathcal I_0$  be the maximal ideal  in $ \mathcal  R_d$
generated by all homogeneous elements  in $ \mathcal R_d$ of  positive degree. Then the theorem follows if $I_1$,
\dots , $I_s$ generate an ideal $\mathcal I$ in $ \mathcal R_d$ with $rad(\mathcal I)=\mathcal I_0$. For if this
is the case, we have an integer $q$ such that
\begin{equation}\label{eq5}
 J_i ^{q} \in \mathcal I, \quad  \textit{    for all   } i
\end{equation}
Set $S:= \{ J_1 ^{i_1} J_2^{i_2} \dots J_r^{i_r} \,  | \, 0 \leq i_1,  \dots  ,   i_r <  q \}$.  Let $\mathcal  M$
be the $k\, [I_1,  \dots   I_s]$-submodule in $\mathcal R_d$  generated by $S$.  We prove $\mathcal R_d  =
\mathcal M$. Let $J \in \mathcal  R_d$  be homogeneous.  Then $J  = J^{'}  + J^{''}$  where  $J^{'} \in  \mathcal
M$, $\, \, J^{''}$ is a $k$-linear combination of  $J_1^{i_1} J_2 ^{i_2}   \dots  J_r ^{i_r}$ with  at least one
$i_{\nu} \geq q$ and $deg(J) = deg(J^{'})  = deg(J^{''})$. Hence Eq.~\eqref{eq5} implies $J^{''} \in \mathcal I$
and so we have
$$J^{''} = f_1 \, I_1 + \cdots  + f_s \, I_s$$
where $f_i \in \mathcal R_d$ for all $i$. Then $deg(f_i) < deg(J^{''}) = deg(J)$ for all $i$. Now by induction on
degree of $J$ we may assume $f_i \in \mathcal M$ for all $i$. This implies $J^{''} \in \mathcal M$ and  hence  $J
\in  \mathcal  M$.  Therefore  $\mathcal M  =  \mathcal R_d$.  So  it  only  remains  to prove  $rad(\mathcal  I)
=  \mathcal I_0$. This  follows from  Hilbert's Nullstellensatz and  the following claim.

\medskip

\noindent  \textbf{Claim:}  $\mathcal I_0$ is the only maximal ideal containing $I_1,  \dots  ,   I_s$.

\medskip

Suppose $\mathcal I_1 $ is  a maximal ideal  in $ \mathcal  R_d$ with $I_1,  \dots   ,   I_s \in  \mathcal I_1$.
Then from Lemma  2 we know there exists a  maximal ideal  $\mathcal J$  of $k\, [A_0,  \dots   ,   A_d]$  with $
\mathcal I_1 \subset \mathcal J$.  The point in $V_d$ corresponding to $\mathcal  J$ lies  on the  null  cone
$\mathcal N_d$ because  $I_1,  \dots   ,   I_s$ vanish  on  this point.  Therefore  $\mathcal I_0  \subset
\mathcal J$,  by definition of  $\mathcal N_d$. Therefore  $\mathcal J \cap \mathcal R_d$ contains both the
maximal ideals $\mathcal I_1$ and $\mathcal I_0$. Hence,   $\mathcal I_1 = \mathcal J  \cap \mathcal R_d =
\mathcal I_0$.

(ii)  $char(k) = p$: The same proof works if Lemma 2 holds. Geometrically  this means the morphism  $\pi : V_d \to
V_d$ // $SL_2(k)$ corresponding to the  inclusion $\mathcal R_d \subset k\, [A_0,  \dots  ,   A_d]$ is surjective.
Here $ V_d$ // $SL_2(k)$ denotes the affine variety corresponding  to the  ring $\mathcal R_d$  and is  called the
{\it categorical quotient}. $\pi$ is surjective  because $SL_2(k)$ is geometrically  reductive. The proof  is by
reduction modulo $p$, see Geyer \cite{Ge}.

\end{proof}

\section{Projective Invariance of Binary Sextics.}
Throughout this section $char(k) \neq 2,3,5$
%
\subsection{Construction of invariants and characterization of multiplicities of the roots.}
We let
\begin{equation}
\begin{split}
f(X,Y)       &=      a_0X^6+a_1X^5Y+ \dots +a_6Y^6       \\
&      =  (y_1X-x_1Y)(y_2X-x_2Y) \dots (y_6X-x_6Y)
\end{split}
\end{equation}

\noindent be an element in $ V_6$.  Set
$$D_{ij}:=   \begin{pmatrix} x_i & x_j \\  y_i & y_j \end{pmatrix}.$$  For $g  \in SL_2(k)$,  we have
$$g(f) =  (y_1^{'} X  - x_1^{'} Y)   \dots  (y_6^{'}  X - x_6^{'}  Y), \quad  \textit{ with } \quad
 \begin{pmatrix}   x_i^{'}  \\ y_i^{'}  \end{pmatrix}  = g^{-1} \, \begin{pmatrix} x_i\\ y_i \end{pmatrix}.$$ Clearly
$D_{ij}$ is  invariant under this action of $SL_2(k)$ on $\mathbb  P^{1}$. Let $\{i, j,  k, l, m, n  \} =\{ 1, 2,
3,  4, 5, 6 \}$. Treating $a_i$ as  variables, we construct the following elements in $\mathcal R_6$ (proof
follows).

\begin{equation}
\begin{split}
I_{10} & =   \prod_{ i <  j } D_{ij}^2 \\
I_2  & =  \displaystyle \sum_{\substack {i<j,k<l,m<n}}  D_{ij}^2D_{kl}^2D_{mn}^2 \\
I_4 &= (4I_{2}^2-B)\\
I_6 &= (8I_{2}^3-160I_2I_4-C) \\
\end{split}
\end{equation}
where
\begin{equation}
\begin{split}
B &=  \sum_{\substack {i<j,j<k,l<m,m<n}}
D_{ij}^2D_{jk}^2D_{ki}^2D_{lm}^2D_{mn}^2D_{nl}^2 \\
C  & = \sum_{
\substack {i<j,j<k,l<m,m<n \\
i<l^{'},j<m^{'},k<n^{'} \\
l^{'},m^{'},n^{'} \in \{l,m,n\}}} D_{ij}^2 D_{jk}^2 D_{ki}^2 D_{lm}^2 D_{mn}^2 D_{nl}^2
D_{{il}^{'}}^2 D_{{jm}^{'}}^2 D_{{kn}^{'}}^2 \\
\end{split}
\end{equation}
The  number of  summands  in  $B$ \ (resp.  $C$)  equals $\frac{  \begin{pmatrix}  6 \\ 3
\end{pmatrix} }{ 2 !} =  10 $ (resp. 60).

\begin{lem}\label{lem3}
$I_{2i}$ are homogeneous elements in $\mathcal R_6$ of degree $2i$,  for $i$ = 1,2,3,5.
\end{lem}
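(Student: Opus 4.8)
The plan is to treat each $I_{2i}$ first as a function of the roots $(x_i,y_i)$ and only afterwards identify it with an element of $k\,[A_0,\dots,A_6]$. The starting point is that every bracket $D_{ij}$ is an $SL_2(k)$-invariant: for $g\in SL_2(k)$ the roots transform by $g^{-1}$, and since $\det(g^{-1})=1$ one gets $D_{ij}'=\det(g^{-1})\,D_{ij}=D_{ij}$. Hence every polynomial in the $D_{ij}$ is invariant under the induced action on the roots, so once we know such a polynomial lies in $k\,[A_0,\dots,A_6]$ it automatically lies in $\mathcal R_6$. Thus the whole statement reduces to three checks for each of $I_2$, $I_4$, $I_6$, $I_{10}$: that it is symmetric under the $S_6$ relabelling of the roots, that it is homogeneous of one and the same degree $2i$ in each root pair $(x_j,y_j)$, and that a symmetric function of this ``balanced'' kind is a homogeneous polynomial of degree $2i$ in the coefficients $a_0,\dots,a_6$.

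Symmetry is immediate from the way the defining sums are indexed: $I_{10}$ runs over all pairs, $I_2$ over all partitions of $\{1,\dots,6\}$ into three unordered pairs, and $B,C$ over all partitions into two unordered triples (with, for $C$, the additional sum over the matchings between the two triples), so each is $S_6$-invariant; $I_4$ and $I_6$ inherit symmetry as polynomial combinations. For the homogeneity I would simply count, for a fixed index $j$, the number of bracket factors containing $j$: in $I_2$ each index sits in exactly one pair, giving degree $2$; in $B$ each index sits in two pairs inside its triple, giving degree $4$, so $I_4=4I_2^2-B$ is of degree $4$; in $C$ each index sits in two internal pairs plus one cross bracket, giving degree $6$, so $I_6=8I_2^3-160I_2I_4-C$ is of degree $6$; and in $I_{10}$ each of the six indices occurs in five squared brackets, giving degree $10$. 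In every case the degree is the same in each root pair, which is exactly the balancing condition.

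The substantive point, and the step I expect to be the main obstacle, is the passage from a symmetric, equi-homogeneous function of the roots to a polynomial in the $a_j$ of the predicted degree. I would prove the following descent principle: if $P(x_1,y_1,\dots,x_6,y_6)$ is symmetric under the simultaneous permutation of the pairs and homogeneous of degree $g$ in each pair, then $P$ is a homogeneous polynomial of degree $g$ in $a_0,\dots,a_6$. On the open set where all $y_i\neq 0$ set $t_i=x_i/y_i$; homogeneity gives $P=\bigl(\prod_i y_i^{\,g}\bigr)Q(t_1,\dots,t_6)=a_0^{\,g}\,Q(t)$, where $Q$ is symmetric in the $t_i$ and of partial degree at most $g$ in each. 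By the fundamental theorem of symmetric functions $Q=\widetilde Q(e_1,\dots,e_6)$, and using $e_j=(-1)^j a_j/a_0$ each monomial $e_1^{m_1}\cdots e_6^{m_6}$ contributes $\pm\,a_0^{\,g-\sum m_j}a_1^{m_1}\cdots a_6^{m_6}$ after multiplication by $a_0^{\,g}$. The crux is to check that $\sum_j m_j\le g$ for every monomial occurring, so that no negative power of $a_0$ survives; this follows from the bound on the partial degrees of $Q$, since writing $e_j=t_1 e_{j-1}'+e_j'$ in terms of the elementary symmetric polynomials $e_\ell'$ of $t_2,\dots,t_6$ shows that $\deg_{t_1}\bigl(e_1^{m_1}\cdots e_6^{m_6}\bigr)=\sum_j m_j$ with leading coefficient a product of algebraically independent $e_\ell'$, precluding cancellation at the top degree. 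Granting this, every surviving monomial has total degree exactly $g$ in the $a_j$, the polynomial identity holds on a dense open set and hence everywhere, and combined with the invariance of the $D_{ij}$ we conclude that each $I_{2i}$ is a homogeneous element of $\mathcal R_6$ of degree $2i$.
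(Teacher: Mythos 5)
Your proof is correct, and its overall skeleton matches the paper's: invariance of the brackets $D_{ij}$ under $SL_2(k)$, $S_6$-symmetry, homogeneity of the same degree $2i$ in each root pair, and the fundamental theorem of symmetric functions to descend to the coefficients. Where you genuinely diverge is in the step that turns the symmetric function of the roots into a \emph{polynomial} (rather than merely a rational function) in $a_0,\dots,a_6$. The paper uses a localization trick: the symmetric-function theorem only gives $I_{2i}=a_0^{2i}f_i(a_1/a_0,\dots,a_6/a_0)$, a rational function whose denominator is a power of $a_0$; switching the roles of $X$ and $Y$ shows the denominator is also a power of $a_6$, hence constant, and homogeneity of degree $2i$ is then obtained by a separate scaling argument ($f\mapsto cf$ multiplies $I_{2i}$ by $c^{2i}$). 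You instead prove a quantitative refinement of the symmetric-function theorem: if $Q$ is symmetric with partial degree at most $g$ in each $t_i$, then every monomial $e_1^{m_1}\cdots e_6^{m_6}$ occurring in its expansion satisfies $\sum_j m_j\le g$. Your leading-coefficient argument for this is sound: distinct exponent tuples with the same total degree $\sum_j m_j$ give distinct monomials in the algebraically independent $e_1',\dots,e_5'$, so the top $t_1$-degree terms cannot cancel, forcing $\max\sum_j m_j=\deg_{t_1}Q\le g$. This kills the denominator outright and simultaneously yields homogeneity, since each surviving monomial $a_0^{\,2i-\sum_j m_j}a_1^{m_1}\cdots a_6^{m_6}$ has total degree exactly $2i$, so no separate scaling step is needed. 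The paper's route is shorter and slicker; yours is more self-contained (no appeal to the $X\leftrightarrow Y$ symmetry, which the paper leaves to the reader to make precise) and, as a bonus, you verify explicitly the balanced-degree counts for $I_2$, $B$, $C$, $I_{10}$ that the paper only asserts implicitly when writing $I_{2i}=(y_1\cdots y_6)^{2i}\,\widetilde I_{2i}(x_1/y_1,\dots,x_6/y_6)$. Both arguments are valid.
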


\begin{proof}
Each $I_{2i}$  can be written  as
$$(y_1 \dots y_6)^{2i} \cdot {\w I}_{2i} (\frac{x_1} {y_1},  \dots  ,   \frac{x_6} {y_6})$$
with ${\w I}_{2i}$ a symmetric  polynomial  in  $\frac{x_1}{y_1},  \frac{x_1}{y_2},   \dots   ,   \frac{x_6}{y_6}$
for $i$ = 1, 2, 3, 5.  Therefore  by the  fundamental theorem of elementary symmetric functions  we have
$$I_{2i} = a_0 ^{2i} \cdot f_i(\frac{a_1}{a_0},   \dots   ,     \frac{a_6}{a_0}),$$
where  $f_i$  is  a polynomial in 6 variables and hence $I_{2i}$ is a rational function in $a_0$,   \dots  $a_6$
with denominator  a power  of $a_0$.  Switching the roles $X$ and $Y$  we also  see that  the denominator  is a
power of $a_6$.  Thus $I_{2i}  \in k\, [a_0,   \dots   ,   a_6]$. Clearly $I_{2i}$  are $SL_2(k)$-invariants  and
hence lie in  $\mathcal  R_6$.  Further, replacing  $f$ by $cf$  with $  c \in  k^{*}$, multiplies  $I_{2i}$ by
$c^{2i}$. Hence, $I_{2i}$ are homogeneous of degree $2i$.
\end{proof}

Note  that $I_2$ is  the $SL_2(k)$-invariant  quadratic form  on $V_6$ (see  {\it  Remark  \ref{rem1}}) and
$I_{10} $ is the discriminant  of  the sextic.  $I_{10}$   vanishes  if  and   only  if  two  of   the  roots
coincide. Also note that if for a sextic all its roots are equal, then all the basic invariants vanish. These
basic invariants when evaluated on a sextic $f(X,Y)  = a_0 X^6 + a_1 X^5 Y +  \dots   a_6 Y^6$ with a root at
$(1,0)$, i.e., with $a_0 = 0$, take the following form.

\begin{small}
\begin{equation}
\begin{split}
I_2 = &-20a_1a_5+8a_2a_4-3a_3^2 \\
I_4 = & -24000a_1^2a_4a_6+10000a_1^2a_5^2+14400a_1a_3a_2a_6-1800a_1a_3^2a_5-3200a_1a_4a_2a_5\\
& +960a_1a_3a_4^2-3840a_2^3a_6+960a_2^2a_3a_5+256a_2^2a_4^2-432a_2a_4a_3^2+81a_3^4\\
I_6 = & 100a_1a_3^4a_5-40a_1a_3^3a_4^2+6250a_1^3a_3a_6^2-160a_2^4a_4a_6+60a_2^3a_3^2a_6\\
& -40a_2^2a_3^3a_5-8a_2^2a_3^2a_4^2-2500a_2^2a_1^2a_6^2+8a_2a_3^4a_4-2500a_1^2a_3a_6a_2a_5\\
& -100a_2^4a_5^2-24a_2^3a_4^3-350a_1a_3^2a_2a_4a_5+300a_1a_3a_2^2a_4a_6+1000a_2^3a_1a_6a_5\\
& -100a_1^2a_4^4-a_3^6+250a_1^2a_3^2a_6a_4+250a_1^2a_4^2a_3a_5-100a_1a_4^2a_2^2a_5\\
& +250a_1a_3a_2^2a_5^2+140a_2^3a_4a_3a_5-150a_1a_3^3a_2a_6+140a_1a_3a_2a_4^3 \\
\end{split}
\end{equation}
\end{small}

\begin{lem}\label{lem4}
A sextic has a root of multiplicity exactly three if and only if the basic invariants take the form
\begin{equation}\label{eq6}
I_2 = 3r^{2}, \quad I_4 =  81r^{4}, \quad I_6 = r^{6},  \quad I_{10} = 0.
\end{equation}
for some $ r \neq 0 $.
\end{lem}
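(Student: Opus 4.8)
The plan is to recast the four equations in \eqref{eq6} in a manifestly $GL_2(k)$-invariant form and then to normalize the position of a repeated root. Writing any $g\in GL_2(k)$ as $(cI)h$ with $c^2=\det g$ and $h\in SL_2(k)$, and using that $I_{2i}$ is an $SL_2(k)$-invariant homogeneous of degree $2i$ while the scalar $cI$ scales a sextic by $c^6$, one gets $I_{2i}(g(f))=(\det g)^{6i}\,I_{2i}(f)$. Consequently the conditions $I_{10}=0$, $I_2\neq 0$, $I_4=9I_2^2$ and $27I_6=I_2^3$ are each unchanged if $f$ is replaced by $g(f)$. These conditions are moreover equivalent to \eqref{eq6}: given $r\neq 0$ as in \eqref{eq6} they hold at once, while conversely, since $k$ is algebraically closed one may take $r$ with $r^2=I_2/3$, and then $I_4=9I_2^2=81r^4$ and $I_6=I_2^3/27=r^6$. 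So I would reduce to proving that $f$ has a root of multiplicity exactly three if and only if $I_{10}=0$, $I_2\neq 0$, $I_4=9I_2^2$ and $27I_6=I_2^3$, with the freedom to move any chosen root of $f$ to $(1,0)$ by an element of $GL_2(k)$.

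For the forward implication I would move a root of multiplicity exactly three to $(1,0)$, so that $Y^3\mid f$ but $Y^4\nmid f$, i.e.\ $a_0=a_1=a_2=0$ and $a_3\neq 0$. Substituting $a_0=a_1=a_2=0$ into the formulas displayed above for $I_2,I_4,I_6$ (valid whenever $a_0=0$) collapses them to $I_2=-3a_3^2$, $I_4=81a_3^4$ and $I_6=-a_3^6$, which give $I_2\neq 0$, $I_4=9I_2^2$ and $27I_6=I_2^3$; and $I_{10}=0$ since $f$ has a repeated root. By the invariance noted above, the same four conditions then hold for the original $f$.

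For the converse I would assume the four conditions. Since $I_{10}=0$, $f$ has a repeated root; since $I_2\neq 0$, $f$ has no root of multiplicity $\geq 4$, because moving such a root to $(1,0)$ would force $a_0=a_1=a_2=a_3=0$ and hence $I_2=0$. Now move a repeated root to $(1,0)$, so $a_0=a_1=0$. If $a_2=0$ this root has multiplicity $\geq 3$, hence exactly $3$, and we are done. Otherwise $a_2\neq 0$ and I would factor $f=Y^2g$ with $g=a_2X^4+a_3X^3Y+a_4X^2Y^2+a_5XY^3+a_6Y^4$, a binary quartic with $g(1,0)=a_2\neq 0$. Let $S=a_4^2-3a_3a_5+12a_2a_6$ and $T$ be the basic quadratic and cubic invariants of $g$. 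Evaluating $I_2,I_4,I_6$ on $f=Y^2g$ one finds
\begin{equation*}
9I_2^2-I_4=320\,a_2^2\,S,\qquad I_2^3-27I_6\equiv c\,a_2^3\,T \pmod{S}
\end{equation*}
with $c\neq 0$. Hence $I_4=9I_2^2$ forces $S=0$, and then $27I_6=I_2^3$ forces $T=0$ (using $a_2\neq 0$). Thus $g$ lies in the null cone $N_4$ of $V_4$, so it has a root of multiplicity $\geq 3$; this root is distinct from $(1,0)$ since $a_2\neq 0$, and it is a root of $f$ of the same multiplicity, which by the bound above is exactly $3$.

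The hard part will be the second displayed identity: the relation $9I_2^2-I_4=320\,a_2^2S$ is a short substitution (I have checked it directly), but verifying that $I_2^3-27I_6$ is congruent modulo $S$ to a nonzero multiple of $a_2^3T$ requires expanding $I_6$ on $f=Y^2g$ and matching it against the quartic invariants $S,T$; the degree count ($I_2^3-27I_6$ and $a_2^3T$ are both homogeneous of degree $6$ in $a_2,\dots,a_6$, while $S$ has degree $2$) makes the shape of the congruence plausible but does not by itself pin down the constant. I would also invoke the classical description of the null cone of binary quartics---namely $g\in N_4$ if and only if $S=T=0$, equivalently $g$ has a root of multiplicity greater than $2$---to turn the vanishing of $S$ and $T$ into the existence of the required triple root.
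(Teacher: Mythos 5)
Your proposal is correct; the forward direction coincides with the paper's, but your converse takes a genuinely different route. The paper uses $I_6\neq 0$ to get a root besides the repeated one, normalizes the repeated root to $(1,0)$ and the extra root to $(0,1)$ so that $a_0=a_1=a_6=0$, and then eliminates $a_4$, $a_2$, $a_5$ by hand from the three equations \eqref{eq7}, arriving at $(a_3^{2}-r^2)^3(a_3^{2}-(3r)^2)=0$ and exhibiting the triple root in each case. You normalize only the repeated root, factor $f=Y^2g$, and convert the relations $I_4=9I_2^2$, $27I_6=I_2^3$ into the vanishing of the quartic invariants of the cofactor $g$. The step you flagged as unverified does hold: with $a_0=a_1=0$ and $T=72a_2a_4a_6+9a_3a_4a_5-27a_2a_5^2-27a_3^2a_6-2a_4^3$ one has $9I_2^2-I_4=320\,a_2^2S$ and $I_2^3-27I_6\equiv -100\,a_2^3T \pmod{S}$, and the constants $320$ and $-100$ are nonzero since $char(k)\neq 2,5$; so your argument goes through. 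Your route buys several things: it explains structurally what the numerical relations in \eqref{eq6} mean (the cofactor quartic lies in $N_4$); your weight formula $I_{2i}(g(f))=(\det g)^{6i}I_{2i}(f)$ makes the $GL_2(k)$-normalizations legitimate, a point the paper passes over silently (alternatively its normalizations can be done inside $SL_2(k)$, which acts transitively on ordered pairs of distinct points of $\mathbb{P}^1$); and your sign-free reformulation avoids a slip in the paper, where setting $a_3=r$ actually gives $I_2=-3r^2$ and $I_6=-r^6$, so the paper's $r$ is really $\pm\sqrt{-1}\,a_3$. The price is the appeal to the classical description of the null cone of binary quartics, which the paper's self-contained elimination does not need; to keep the lemma self-contained you should include its short proof: if $g$ had four distinct roots one could normalize them to $0,1,\infty,\lambda$, so that $g=XY(X-Y)(X-\lambda Y)$ has $S=\lambda^2-\lambda+1$ and $T=(1+\lambda)(2\lambda^2-5\lambda+2)$, which when $S=0$ equals $-3\lambda(1+\lambda)$, and these cannot vanish together since $char(k)\neq 3$; hence $S=T=0$ forces a repeated root, and moving it to $(1,0)$ turns $S$ into the square of the middle coefficient, so $S=0$ yields a root of multiplicity at least $3$.
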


\begin{proof}
Let $f(X,Y) =  a_0 X^6 + a_1 X^5Y  +  \dots  + a_6 Y^6$ be  a sextic with  triple root. Let the triple root be at
$(1,0)$. Then $a_0 = a_1 =  a_2  = 0$. Set  $ a_3 = r  $. Then $I_{2i}$ for $  i $ = 1, 2,  3 take the form
mentioned in the lemma. Conversely assume Eq.~\eqref{eq6}. Since $I_{10} = 0 $, the sextic has a multiple  root.
Since $I_6 \neq 0$, there is at least one more  root. We assume the multiple root is  at $(1,0) $ and other root
is $ ( 0,1) $. Then the sextic takes the form
$$ a_{2}X^4 Y^2 + a_{3}X^3 Y^3 + a_{4}X^2 Y^4 + a_{5}X Y^5 $$
and  Eq.~\eqref{eq6} becomes
\begin{small}
\begin{equation}\label{eq7}
\begin{split}
-8a_2a_4+3a_3^2 = & \,  3r^2\\
960a_2^2a_3a_5+256a_2^2a_4^2-432a_2a_4a_3^2+81a_3^4 = & \, 81r^4\\
40a_2^2a_3^3a_5+8a_2^2a_3^2a_4^2-8a_2a_3^4a_4+24a_2^3a_4^3+100a_2^4a_5^2-140a_2^3a_4a_3a_5+a_3^6 = & \,  r^6
\end{split}
\end{equation}
\end{small}
Now eliminating $ a_4 $ from Eq.~\eqref{eq7}, we have,
$$ 2^{6} a_{2}^2 a_3{a_5} = 3(a_3^{2} - r^2)^2 \quad  \textit{  and } \quad  2^{9}{a_2}^4{a_5}^2 = (a_3^{2} - r^2)^3.$$
Eliminating $ a_2$ and $ a_5 $ from these equations we get
$$ (a_3^{2} - r^2)^3({a_3}^2 - (3r)^2) = 0.$$
If ${a_3}^2 = r^2 $, then $ {a_2} \, a_4 = a_2 \, a_5 = 0 $. In this case either $(0,1)$ or $ (1,0) $ is a triple
root. On the other hand if we have $ {a_3}^2 = (3r)^2 $, then $a_2\, a_4 = 3r^2$ and $ {a_2}^2\, a_5 = r^3 $ or $
-r^3$. Hence, either $(r{a_2}^{-1}, 1)$ or $(-r{a_2}^{-1}, 1)$ is a triple root.

\end{proof}

\begin{lem}\label{lem5}
A sextic has a root of multiplicity at least four if and only if the basic invariants vanish simultaneously.
\end{lem}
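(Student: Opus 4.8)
The plan is to prove both implications by reduction to normal forms, mirroring the computation in Lemma~\ref{lem4}. Since each $I_{2i}$ is carried to a nonzero scalar multiple of itself under the $GL_2(k)$-action (a power of $\det$ times its value, by the homogeneity argument in Lemma~\ref{lem3}), the simultaneous vanishing of $I_2,I_4,I_6,I_{10}$ is a $GL_2(k)$-invariant condition; hence I may freely move any chosen root to a convenient point of $\mathbb P^1$ before evaluating. For the forward implication I would place a root of multiplicity at least four at $(1,0)$, which forces $a_0=a_1=a_2=a_3=0$, and then read off from the explicit formulas that every monomial of $I_2,I_4,I_6$ contains one of $a_1,a_2,a_3$, so all three vanish, while $I_{10}=0$ because there is a multiple root. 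This direction is routine inspection.

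The substance is the converse. Assuming $I_2=I_4=I_6=I_{10}=0$, the vanishing of the discriminant $I_{10}$ produces a multiple root, which I move to $(1,0)$ so that $a_0=a_1=0$. I then split on whether $a_2=0$. If $a_2=0$ the form is divisible by $Y^3$, so $(1,0)$ has multiplicity at least three; it cannot be exactly three, for Lemma~\ref{lem4} would then give $I_2=3r^2\neq 0$ (using $\mathrm{char}(k)\neq 3$), contradicting $I_2=0$, so the multiplicity is at least four and we are done.

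In the remaining case $a_2\neq 0$, so $(1,0)$ is exactly a double root and $f=Y^2 q(X,Y)$ with $q$ a quartic of nonzero leading coefficient. Here I would apply the shear $X\mapsto X-(a_3/4a_2)Y$, $Y\mapsto Y$ --- legitimate since $\mathrm{char}(k)\neq 2$ and $a_2\neq 0$ --- to depress $q$ and arrange $a_3=0$ while preserving the factor $Y^2$ (hence $a_0=a_1=0$). The formulas then collapse in cascade: $I_2=8a_2a_4$ forces $a_4=0$; next $I_4=-3840\,a_2^3a_6$ forces $a_6=0$; finally $I_6=-100\,a_2^4a_5^2$ forces $a_5=0$. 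Thus $f=a_2X^4Y^2$, which has a root of multiplicity four at $(0,1)$, completing the proof.

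The main obstacle is bookkeeping rather than conceptual: I must verify that the three explicit polynomials really do reduce to the single monomials above once $a_1=a_3=0$ (and successively $a_4=0$), and I must check that each normalization is effected by a $GL_2(k)$-element that both fixes the double root at $(1,0)$ and preserves the vanishing of the invariants. The degenerate subcase $a_2=0$ must be isolated precisely because the shear used to kill $a_3$ divides by $a_2$; routing it through Lemma~\ref{lem4} avoids any division and is where the hypothesis $\mathrm{char}(k)\neq 3$ is genuinely used.
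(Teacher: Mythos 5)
Your proof is correct, and while it shares the paper's overall skeleton (normalize the multiple root to $(1,0)$, then exploit the explicit formulas for $I_2, I_4, I_6$ with $a_0=0$, with a case split on $a_2$), the mechanics of your converse are genuinely different. After placing the multiple root at $(1,0)$, the paper also moves a second root to $(0,1)$ (first disposing of the trivial case where no second root exists), so that $a_0=a_1=a_6=0$; it then eliminates among the three equations: $I_2=0$ gives $a_2a_4 = 2^{-3}\cdot 3\, a_3^2$, substituting this into $I_4=0$ gives $a_2^2a_3a_5 = 2^{-6}\cdot 3\, a_3^4$, and feeding both into $I_6=0$ yields $a_2a_3=0$, after which the two cases finish quickly. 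You instead leave the remaining roots alone and, when $a_2\neq 0$, normalize by the unipotent shear killing $a_3$ (which lies in $SL_2(k)$, fixes $(1,0)$, and so preserves the invariants exactly); that is what makes the invariants collapse to the single monomials $8a_2a_4$, $-3840\,a_2^3a_6$, $-100\,a_2^4a_5^2$, so the cascade $a_4=a_6=a_5=0$ is read off by inspection with no elimination. Your degenerate branch $a_2=0$ is routed through Lemma \ref{lem4} (a sextic with a root of multiplicity exactly three has $I_2 = 3r^2 \neq 0$), which is legitimate since Lemma \ref{lem4} precedes this lemma, but it is heavier than necessary: with $a_1=a_2=0$ the formula gives $I_2=-3a_3^2$ directly, so $I_2=0$ forces $a_3=0$ and multiplicity at least four at $(1,0)$, which is exactly how the paper argues. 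The trade is fair --- your shear replaces the paper's substitution computation by inspection, at the cost of the shear bookkeeping and the appeal to the previous lemma. One correction to your closing commentary: the hypotheses $\mathrm{char}(k)\neq 3$ and $\neq 5$ are also needed in your main case, since $3840 = 2^8\cdot 3\cdot 5$ and $100 = 2^2\cdot 5^2$, so it is not true that the assumption on $3$ is used only in the $a_2=0$ branch.
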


\begin{proof}
Suppose $(1, 0)$ is a root of multiplicity 4. Then $a_1 = a_2 = a_3 = 0 $. Therefore $I_2 = I_4 = I_6 = I_{10} =
0$. For the converse, since $I_{10} = 0$, there is a multiple root. If there is no root other than the multiple
root, we are done. Otherwise, let the multiple root be at (1,0) and the other root be at (0, 1). Then as in the
previous lemma, the sextic becomes
$$ a_{2}X^4 Y^2 + a_{3}X^3 Y^3 + a_{4}X^2 Y^4 + a_{5}X Y^5 $$
Now $ I_2 = 0 $ implies $ a_2\, a_4 = 2^{-3} \cdot 3 \cdot  {a_3}^2 $ and hence $ I_4 = 0 $ implies
$$ {a_2}^2\, a_3\, a_5 = 2^{-6}\cdot 3 \cdot {a_3}^4 .$$
Using these two equations in $ I_6 = 0 $ we find $ a_2\, a_3 = 0 $. Let $a_2 \neq 0$. This implies $a_3 =  a_4 =
a_5 = 0 $ and the sextic has a  root of multiplicity four at $(0,1)$. If $ a_2 = 0$, then $ I_2 = 0 $ implies $a_3
= 0 $ and therefore the sextic has a root of multiplicity four at $(1,0)$.
\end{proof}

\subsection{The Null Cone of $V_6$ and Algebraic Dependencies}
\begin{lem}\label{lem6}
$\mathcal R_6$ is finitely generated as a module over $k\, [I_2, I_4, I_6, I_{10}]$.
\end{lem}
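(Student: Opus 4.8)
The plan is to invoke Theorem~\ref{thm2}, which says that $\mathcal R_6$ is finitely generated as a module over $k\,[I_1,\dots,I_s]$ whenever the homogeneous invariants $I_1,\dots,I_s$ have common zero set equal to the null cone $\mathcal N_6$. With $s=4$ and the four basic invariants $I_2,I_4,I_6,I_{10}$ in hand, it therefore suffices to prove the single set-theoretic statement
\begin{equation*}
\{\, f \in V_6 : I_2(f)=I_4(f)=I_6(f)=I_{10}(f)=0 \,\} = \mathcal N_6.
\end{equation*}
The inclusion $\supseteq$ is immediate: by Lemma~\ref{lem3} the $I_{2i}$ are homogeneous elements of $\mathcal R_6$ of positive degree, and by definition $\mathcal N_6$ is the zero set of \emph{all} such elements, so in particular every point of $\mathcal N_6$ annihilates these four. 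The content is the reverse inclusion, namely that the common vanishing of just these four invariants already forces a point into the null cone.

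First I would pin down the geometric meaning of the null cone. The standard description (a consequence of the Hilbert--Mumford numerical criterion for $SL_2$) is that a binary sextic $f$ lies in $N_6$ precisely when it has a root of multiplicity greater than $d/2=3$, i.e.\ a root of multiplicity at least $4$. Granting this characterization, the proof reduces entirely to facts already established in the preceding lemmas. Indeed, Lemma~\ref{lem5} asserts that a sextic has a root of multiplicity at least four if and only if $I_2,I_4,I_6,I_{10}$ vanish simultaneously. Combining the two statements gives exactly
\begin{equation*}
f \in N_6 \iff f \text{ has a root of multiplicity} \ge 4 \iff I_2(f)=I_4(f)=I_6(f)=I_{10}(f)=0,
\end{equation*}
which is the desired equality of zero sets. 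Then Theorem~\ref{thm2} applies verbatim and yields the claim.

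The main obstacle I anticipate is justifying the identification of $N_6$ with the locus of sextics having a root of multiplicity $\ge 4$. Lemma~\ref{lem5} already supplies one half of this (it equates simultaneous vanishing of the four invariants with a root of multiplicity $\ge 4$), so what really remains is to show that these four invariants generate the nullcone ideal up to radical, or equivalently that their common zero locus is contained in $\mathcal N_6$. One clean way to close this gap without appealing to the Hilbert--Mumford criterion is to argue directly: if $f$ has a root of multiplicity $\ge 4$, place it at $(1,0)$ so that $f = X^2(a_4 X^2 Y^2 \cdots)$ has $a_0=a_1=a_2=a_3=0$, and then exhibit a one-parameter subgroup $g_t=\mathrm{diag}(t,t^{-1})$ under which $g_t(f)\to 0$; this shows $0$ is in the closure of the $SL_2(k)$-orbit, so every homogeneous invariant of positive degree vanishes on $f$, placing $f\in\mathcal N_6$. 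The converse containment, that a point of $\mathcal N_6$ has such a root, then follows since $I_2,I_4,I_6,I_{10}$ are themselves among the positive-degree invariants defining $\mathcal N_6$ and Lemma~\ref{lem5} forces the multiplicity condition. With the set equality in place, the module-finiteness is a direct citation of Theorem~\ref{thm2}.
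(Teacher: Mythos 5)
Your proposal is correct and is essentially the paper's own proof: both reduce via Theorem~\ref{thm2} to showing $V(I_2,I_4,I_6,I_{10})=\mathcal N_6$, note that one inclusion is automatic since the $I_{2i}$ are positive-degree homogeneous invariants, and obtain the other by combining Lemma~\ref{lem5} with the diagonal one-parameter-subgroup argument (the paper phrases your orbit-closure limit algebraically, observing that $I(f^{g(\lambda)})$ is a polynomial in $\lambda$ with no constant term yet constant in $\lambda$ by invariance, so $I(f)=0$). The only blemishes are cosmetic: the normal form should read $f=(a_4X^2+a_5XY+a_6Y^2)\,Y^4$ rather than $X^2(\cdots)$, and with the paper's conventions the matrix $\mathrm{diag}(t,t^{-1})$ drives such an $f$ to $0$ as $t\to\infty$ rather than $t\to 0$ (equivalently, use $\mathrm{diag}(t^{-1},t)$).
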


\begin{proof}
By Theorem \ref{thm2} we only have to prove $\mathcal N_6 = V(I_2, I_4, I_6 ,  I_{10}) $. For $\lambda \in k^{*}$,
set $ g(\lambda):= \left(\begin{pmatrix} \lambda ^{-1} &0\\ 0 &\lambda
\end{pmatrix} \right)$. Suppose $I_2$, $I_4$, $I_6$ and $I_{10}$ vanish on a sextic $f \in V_6$.
Then  we know from Lemma \ref{lem5} that $f$ has a root of multiplicity at least 4. Let this multiple root be
$(1,0)$. Then $f$ is of the form
$$f(X,Y) = ({a_4}X^2 + {a_5}XY + {a_6}Y^2)Y^{4}.$$
If $I \in \mathcal R_6$ is homogeneous of degree $s > 0$, then
$$I(f^{g(\lambda)}) = \lambda ^{2s}I_s (a_4 X^2 Y^4 + a_5 \lambda ^2 XY^5  + a_6 \lambda ^4 Y^6).$$
Thus $I(f^{g(\lambda)})$ is a polynomial in $\lambda$ with no constant term. But since $I$ is an
$SL_2(k)$-invariant, we have $I(f^{g(\lambda)}) = I(f)$ for all $\lambda$. Thus $I(f) = 0$. This proves the null
cone $\mathcal N_6 = V(I_2, I_4, I_6, I_{10})$.
\end{proof}

\begin{rem}\label{rem2}
(a) Lemma \ref{lem6} implies $I_2$, $I_4$, $I_6$ and $I_{10}$ are algebraically independent over $k$ because
$\mathcal R_6$ is the coordinate ring of the four dimensional variety $V_6$ // $SL_2(k)$.

(b) The quotient of two homogeneous elements in $k\, [I_2, I_4, I_6, I_{10}]$ of same degree in $A_0$, $A_1$,
\dots ,  $A_6$ is a $GL_2(k)$-invariant. In particular the following elements are $GL_2(k)$-invariants.
\begin{equation*}
T_1 := \frac{I_4}{I_2 ^{2}}, \quad T_2 := \frac{I_6}{I_2 ^{3}}, \quad T_3 := \frac{I_{10}}{I_2 ^{5}}
\end{equation*}

(c) Assertion (a) implies $T_1$, $T_2$ and $T_3$ are algebraically independent over $k$. For if there exists an
equation
\begin{equation}\label{eq3.3}
 \sum  a_{e f g}{T_1^{e} T_2^{f} T_3^{g}} = 0.
\end{equation}
Multiplying Eq.~\eqref{eq3.3}  by $I_2 ^{h}$ gives
\begin{equation}\label{eq3.4}
\sum a_{efg} I_4 ^{e} I_6 ^{f} I_{10}^{g} I_2 ^{h-2e-3f-5g} = 0.
\end{equation}
For large $h$, Eq.~\eqref{eq3.4} is a nontrivial polynomial relation between $I_2$, $I_4$, $I_6$ and $I_{10}$.
This contradicts (a).
\end{rem}
Further define the following
\begin{small}
\begin{equation}
\begin{split}
U_1 & := \frac{I_2^{5}}{I_{10}}   = \frac{1}{T_3}, \quad    U_2:= \frac{I_2 ^{3}I_4}{I_{10}} = \frac{T_1}{T_3},
\quad U_3:= \frac{I_2 ^{2}I_6}{I_{10}}=\frac{T_2}{T_3}, \quad
U_4:= \frac{I_4 ^{5}}{I_{10} ^{2}} = \frac{T_1 ^{5}}{T_3 ^{2}} \\
U_5 & :=\frac{I_4 I_6}{I_{10}}=\frac{T_1 T_2 }{T_3}, \quad U_6:= \frac{I_6^{5}}{I_{10} ^{3}}=\frac{T_2 ^{5}}{T_3
^{3}}, \quad U_7:= \frac{I_2 I_4 ^{2}}{I_{10}}=\frac{T_1 ^{2}}{T_3}, \quad U_8:=  \frac{I_2 I_6 ^{3}}{I_{10}
^{2}}=\frac{T_2 ^{3}}{T_3 ^{2}}.\\
\end{split}
\end{equation}
\end{small}

\begin{rem}\label{rem3}
From the definitions of $U_1$, $U_2$ and $U_3$ it is clear that $k\, (U_1, U_2, U_3)$ $ = k\, (T_1, T_2, T_3)$.
Therefore $U_1$, $U_2$ and $U_3$ are also algebraically independent over $k$.
\end{rem}

\begin{lem}\label{lem7}
Let $a$, $b$, $c$ and $d$ be non-negative integers such that $a+2b+3c=5d$. Then,
$${\bf m} = \frac{I_2^{a}\cdot  I_4^{b} \cdot I_6^{c}} {I_{10}^{d}}  \in  k \, [ U_1, U_2,  \dots  ,   U_8 ] $$
\end{lem}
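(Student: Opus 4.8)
The plan is to recast the statement as a question about a subsemigroup of $\mathbb Z^4$. Each $U_i$, as well as $\mathbf m$ itself, is a Laurent monomial in the invariants $I_2,I_4,I_6,I_{10}$, so I attach to each its exponent vector $v_i=(e_2,e_4,e_6,e_{10})\in\mathbb Z^4$ (recording the powers of $I_2,I_4,I_6,I_{10}$). Explicitly,
\[
v_1=(5,0,0,-1),\ v_2=(3,1,0,-1),\ v_3=(2,0,1,-1),\ v_4=(0,5,0,-2),
\]
\[
v_5=(0,1,1,-1),\ v_6=(0,0,5,-3),\ v_7=(1,2,0,-1),\ v_8=(1,0,3,-2),
\]
while $\mathbf m$ has exponent vector $w=(a,b,c,-d)$. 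Since a product $\prod_i U_i^{n_i}$ with $n_i\in\mathbb Z_{\ge0}$ is again a Laurent monomial, with exponent vector $\sum_i n_i v_i$, it suffices to write $w$ as a nonnegative integer combination of $v_1,\dots,v_8$: this exhibits $\mathbf m$ as a genuine monomial in the $U_i$, hence as an element of $k[U_1,\dots,U_8]$.

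Next I observe that the fourth coordinate is redundant. Applying the linear functional $e_2+2e_4+3e_6+5e_{10}$ to each $v_i$ gives $0$ (for instance $5+5(-1)=0$ for $v_1$, and $15+5(-3)=0$ for $v_6$), and the same functional vanishes on $w$ precisely by the hypothesis $a+2b+3c=5d$. Consequently, any nonnegative combination of the $v_i$ agreeing with $w$ in the first three coordinates automatically agrees in the fourth. The whole problem therefore reduces to the purely combinatorial claim that every $(a,b,c)\in\mathbb Z_{\ge0}^3$ with $a+2b+3c\equiv0\pmod 5$ is a nonnegative integer combination of the eight projected generators $(5,0,0),(3,1,0),(2,0,1),(0,5,0),(0,1,1),(0,0,5),(1,2,0),(1,0,3)$.

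I would prove this claim by induction on $a+b+c$, the base case $(0,0,0)$ giving $\mathbf m=1$. The step rests on the following key Lemma: every $(a,b,c)\in\mathbb Z_{\ge0}^3$ with $a+b+c>0$ and $a+2b+3c\equiv0\pmod5$ dominates (is componentwise $\ge$) at least one of the eight projected generators. Granting this, I subtract a dominated generator from $(a,b,c)$; the difference again lies in $\mathbb Z_{\ge0}^3$, still satisfies $a+2b+3c\equiv0\pmod5$ (each generator has weighted sum in $\{5,10,15\}$), and has strictly smaller coordinate sum, so the induction hypothesis applies and I multiply the corresponding $U_i$ onto the result.

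The one real obstacle is the domination Lemma, which I expect to settle by a finite check. If $a\ge5$, $b\ge5$, or $c\ge5$ the point dominates $(5,0,0)$, $(0,5,0)$, or $(0,0,5)$ respectively, so I may assume $a,b,c\in\{0,\dots,4\}$. A short case analysis over this finite box shows that the points dominating none of the mixed generators $(3,1,0),(2,0,1),(0,1,1),(1,2,0),(1,0,3)$ form a short explicit list, namely $(0,b,0)$, $(0,0,c)$, $(a,0,0)$ together with $(1,1,0),(1,0,1),(1,0,2),(2,1,0)$; and a direct computation confirms that every nonzero entry of this list has $a+2b+3c\not\equiv0\pmod5$. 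Hence no nonzero point of the semigroup escapes domination, the Lemma holds, and the induction goes through. The delicate point is solely the exhaustiveness of this table: I must verify that the eight chosen $U_i$ leave no gaps, which is exactly what the case check certifies.
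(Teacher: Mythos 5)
Your proposal is correct and is essentially the paper's own argument written out in full: the paper likewise uses the pure fifth-power generators $U_1=I_2^{5}/I_{10}$, $U_4=I_4^{5}/I_{10}^{2}$, $U_6=I_6^{5}/I_{10}^{3}$ to reduce to exponents $a,b,c,d<5$ and then concludes ``by inspection.'' Your induction via the domination lemma, with the finite check over the box $\{0,\dots,4\}^{3}$, is exactly that inspection made precise --- and in a somewhat cleaner form, since you only verify that the points dominating no generator violate the congruence $a+2b+3c\equiv 0 \pmod 5$, rather than exhibiting an explicit factorization for each admissible tuple.
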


\begin{proof}
From first column in the above table we see that it is enough to prove the lemma for non-negative integers $a$,
$b$, $c$, $d$ $ < 5$. The proof is now by inspection.
\end{proof}

\begin{lem}\label{lem8}
$\mathcal R := k\, [U_1, U_2, U_3, U_4, U_5, U_6, U_7, U_8]$ is normal
\end{lem}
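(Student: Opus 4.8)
The plan is to recognize $\mathcal R$ as a monomial (affine semigroup) subalgebra of the rational function field $k(T_1,T_2,T_3)$ and to prove normality through the standard correspondence between normal semigroup rings and saturated semigroups. Writing each generator as a monomial in $T_1,T_2,T_3$ one has
\begin{gather*}
U_1=T_3^{-1},\quad U_2=T_1T_3^{-1},\quad U_3=T_2T_3^{-1},\quad U_4=T_1^{5}T_3^{-2},\\
U_5=T_1T_2T_3^{-1},\quad U_6=T_2^{5}T_3^{-3},\quad U_7=T_1^{2}T_3^{-1},\quad U_8=T_2^{3}T_3^{-2},
\end{gather*}
so that $\mathcal R=\bigoplus_{m\in S}k\,T^{m}$, where $T^{(i,j,\ell)}:=T_1^{i}T_2^{j}T_3^{\ell}$ and $S\subseteq\mathbb Z^{3}$ is the additive semigroup generated by the eight exponent vectors $v_1=(0,0,-1),\dots,v_8=(0,3,-2)$ read off above. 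Since $v_2-v_1=(1,0,0)$, $v_3-v_1=(0,1,0)$ and $v_1=(0,0,-1)$ form a $\mathbb Z$-basis of $\mathbb Z^{3}$, the group generated by $S$ is all of $\mathbb Z^{3}$, and hence the fraction field of $\mathcal R$ is $k(T_1,T_2,T_3)$, in agreement with Remark \ref{rem3}.

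First I would determine the rational polyhedral cone $C=\mathbb R_{\ge 0}S$ spanned by the $v_r$. Normalizing each generator so that its $T_3$-exponent equals $-1$ and taking the convex hull of the resulting points, one finds that the extreme rays are $v_1,v_4,v_6$ and that $C$ is cut out by exactly three facet inequalities, namely $i\ge 0$, $j\ge 0$ and $2i+3j+5\ell\le 0$; a direct check confirms that all eight $v_r$ satisfy these, with $v_4,v_5,v_6$ lying on the last facet. Thus the saturation $\bar S:=C\cap\mathbb Z^{3}$ consists of the integer triples $(i,j,\ell)$ with $i,j\ge 0$ and $2i+3j+5\ell\le 0$. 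The crucial point is that $\bar S\subseteq S$, and this is precisely Lemma \ref{lem7}: given such a triple set $d=-\ell$, $b=i$, $c=j$, $a=-(2i+3j+5\ell)$, which are non-negative integers with $a+2b+3c=5d$, so that $T^{(i,j,\ell)}=T_1^{b}T_2^{c}T_3^{-d}=I_2^{a}I_4^{b}I_6^{c}/I_{10}^{d}\in\mathcal R$ by Lemma \ref{lem7}; since $\mathcal R$ is the direct sum of the distinct monomials $T^{m}$ with $m\in S$, the membership of this single monomial forces $(i,j,\ell)\in S$. As $S\subseteq\bar S$ is immediate, we get $S=\bar S$, i.e. $S$ is saturated.

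It then remains to deduce normality of $\mathcal R=k[\bar S]$. The cleanest route is to write $C$ as the intersection of its three defining half-spaces, with primitive integral functionals $u_1=(1,0,0)$, $u_2=(0,1,0)$, $u_3=(-2,-3,-5)$, so that $\bar S=\bigcap_{t=1}^{3}\sigma_t$ with $\sigma_t=\{m\in\mathbb Z^{3}:\langle u_t,m\rangle\ge 0\}$, and correspondingly $\mathcal R=\bigcap_{t=1}^{3}k[\sigma_t]$ inside $k(T_1,T_2,T_3)$. Because each $u_t$ is primitive it extends to a $\mathbb Z$-basis of the dual lattice, and in the associated coordinates $k[\sigma_t]\cong k[y_1,y_2^{\pm 1},y_3^{\pm 1}]$, a localization of a polynomial ring and hence a normal domain with fraction field $k(T_1,T_2,T_3)$. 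An intersection of normal subdomains of a field sharing a common fraction field is again normal, whence $\mathcal R$ is normal. I expect the main obstacle to be the convexity step: one must verify that the three inequalities are genuinely the facets of $C$ rather than merely valid inequalities, and, in tandem, that Lemma \ref{lem7} accounts for \emph{every} lattice point of $C$; once the cone is pinned down, the normality of $k[\bar S]$ is formal.
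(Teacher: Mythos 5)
Your proof is correct, and it takes a genuinely different route from the paper's. The paper argues by a direct integrality computation: if $J$ in the fraction field of $\mathcal R$ is integral over $\mathcal R$, multiplying the integral equation by $I_{10}^{ne}$ shows that $I_{10}^{e}J$ is integral over $k[I_2,I_4,I_6,I_{10}]$, which is normal because it is a polynomial ring (Remark \ref{rem2}(a)); hence $I_{10}^{e}J$ lies in that ring, and since it is homogeneous of degree $10e$ in $k[A_0,\dots,A_6]$, dividing by $I_{10}^{e}$ writes $J$ as a $k$-linear combination of the monomials $\mathbf m$ of Lemma \ref{lem7}, so $J\in\mathcal R$. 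You instead identify $\mathcal R$ with the affine semigroup ring $k[S]\subset k[T_1^{\pm1},T_2^{\pm1},T_3^{\pm1}]$ (your direct-sum decomposition rests on the algebraic independence of $T_1,T_2,T_3$, i.e.\ Remark \ref{rem2}(c), the same input the paper uses in the guise of Remark \ref{rem2}(a)) and show that $S$ is the set of lattice points obeying $i\ge0$, $j\ge0$, $2i+3j+5\ell\le0$: one containment by checking the eight exponent vectors, the other being precisely Lemma \ref{lem7}; normality then follows from the standard toric fact that such a semigroup ring is an intersection of rings $k[\sigma_t]\cong k[y_1,y_2^{\pm1},y_3^{\pm1}]$, all normal with common fraction field $k(T_1,T_2,T_3)$. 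Both proofs pivot on Lemma \ref{lem7}, and the normal ambient rings play parallel roles ($k[I_2,I_4,I_6,I_{10}]$ there, the $k[\sigma_t]$ here); the paper's version is shorter and entirely elementary, while yours is more structural: it exhibits $\mathcal R$ as a saturated (normal toric) semigroup ring, which yields more than normality (e.g.\ Cohen--Macaulayness by Hochster's theorem) and never leaves $k(T_1,T_2,T_3)$ or invokes homogeneity in the $A_i$. Finally, the ``main obstacle'' you flag at the end is illusory: you never need the three inequalities to be facets of $C$. Writing $P$ for the set of integer triples satisfying them, validity on the eight generators gives $S\subseteq P$, Lemma \ref{lem7} gives $P\subseteq S$, hence $S=P$, and your half-space intersection argument applies verbatim to $P$ without any reference to the cone or its facet structure; so your proof is complete as written (and the facet claim happens to be true anyway, since $v_1,v_4,v_6$ are visibly the extreme rays of that region).
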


\begin{proof}
Suppose an element $J$ in the field of fractions of $\mathcal R$ is integral over $\mathcal R$. Then we have an
equation
\begin{equation}\label{eq8}
J^n + p_{n-1}(U_1,  \dots ,   U_8) J^{n-1} +  \dots  + p_0(U_1,  \dots  ,  U_8) = 0
\end{equation}
where $p_i$ is a polynomial in 8 variables over $k$. Let $e$ be a positive integer such that $I_{10}^{e}\,  p_i\in
k\, [I_2, I_4, I_6, I_{10}]$ for all $i$. Then multiplying Eq.~\eqref{eq8} by $I_{10}^{n\, e}$, we see that $
I_{10}^e \, J$ is integral over $k\, [I_2, I_4, I_6, I_{10}]$. By {\it Remark 2 (a) } we know that $k\, [I_2, I_4,
I_6, I_{10}]$ is a polynomial ring. Also the field of fractions of $\mathcal R$ is contained in $k\, (I_2, I_4,
I_6, I_{10})$. Therefore $I_{10}^e J \in k\, [I_2, I_4, I_6, I_{10}]$. Since $I_{10}^{e} \, J $ is a homogeneous
element of degree $10e$ in $k\, [A_0,  \dots  ,  A_6]$, $J$ is a $k$- linear combination of elements of the form
${\bf m}$ in Lemma \ref{lem7}. Therefore $J \in \mathcal R$. Hence the claim.

\end{proof}

\subsection{ The Field of Invariants of $GL_2(k)$ on $ k ( A_0,  \dots  ,   A_6 ) $}

Let $K$ denote the invariant field under the $GL_2(k)$ action on $k(A_0,  \dots  ,   A_6)$.

\begin{thm}\label{thm3}
The field $K$ of $GL_2(k)$ invariants in $ k(A_0,  \dots  ,   A_6)$ is a rational functional field, namely $K =
k(T_1, T_2, T_3) = k(U_1, U_2, U_3)$.
\end{thm}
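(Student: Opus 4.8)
The inclusion $k(T_1,T_2,T_3)\subseteq K$ is immediate: by Remark \ref{rem2}(b) each $T_i$ is a $GL_2(k)$-invariant, and by Remark \ref{rem3} we have $k(T_1,T_2,T_3)=k(U_1,U_2,U_3)$. So the whole content is the reverse inclusion $K\subseteq k(T_1,T_2,T_3)$, and my plan is to obtain it by separating the $SL_2(k)$-part and the scalar part of the $GL_2(k)$-action. Write $L:=k(A_0,\dots,A_6)^{SL_2(k)}$ and $M:=k(I_2,I_4,I_6,I_{10})$. Since a rational function is $GL_2(k)$-invariant exactly when it is $SL_2(k)$-invariant and fixed by the residual torus $\mathbb{G}_m=GL_2(k)/SL_2(k)$ acting through the determinant, the first step is the formal identity $K=L^{\mathbb{G}_m}$.

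The second step is a weight computation for this torus. Testing on a scalar matrix $\mu I$, which acts on $V_6$ by $f\mapsto \mu^6 f$, one finds that a homogeneous $SL_2(k)$-invariant of degree $s$ in the $A_i$ is a $\mathbb{G}_m$-eigenvector of weight $3s$ in $t=\det$; in particular $I_2,I_4,I_6,I_{10}$ carry weights $6,12,18,30$. Hence every $\mathbb{G}_m$-semi-invariant inside $M$ has weight in $6\mathbb{Z}$, and a Laurent monomial $I_2^aI_4^bI_6^cI_{10}^d$ of weight $0$ is forced (using $I_2$ to absorb the leftover exponent) to be a monomial in $T_1=I_4/I_2^2$, $T_2=I_6/I_2^3$, $T_3=I_{10}/I_2^5$. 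This gives $M^{\mathbb{G}_m}=k(T_1,T_2,T_3)$.

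It then remains to upgrade $M^{\mathbb{G}_m}=k(T_1,T_2,T_3)$ to $L^{\mathbb{G}_m}=K$, i.e. to check that the residual torus produces no invariants in $L$ beyond those already lying in $M$. For this I would invoke the classical structure of the sextic invariants, available here because $\mathrm{char}\,k>5$: one has $L=\mathrm{Frac}(\mathcal R_6)=M(I_{15})$, a degree-two extension, where $I_{15}$ is the invariant of degree $15$ with $I_{15}^2\in k[I_2,I_4,I_6,I_{10}]$. Thus $L=M\oplus M\,I_{15}$ as a $\mathbb{G}_m$-graded $M$-module, and since $I_{15}$ has weight $45\equiv 3\pmod 6$, writing any $F\in L^{\mathbb{G}_m}$ as $F=F_1+F_2 I_{15}$ with $F_1,F_2\in M$ and comparing weights forces $F_1\in M^{\mathbb{G}_m}$ and $F_2$ to be a semi-invariant of weight $-45\notin 6\mathbb{Z}$, whence $F_2=0$. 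Therefore $F=F_1\in k(T_1,T_2,T_3)$, which is the desired inclusion.

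The step I expect to be the real obstacle is precisely this last input: that the $SL_2(k)$-invariant field is generated over $M$ by a single invariant of \emph{odd} degree. Geometrically this is the assertion that $T_1,T_2,T_3$ separate generic $GL_2(k)$-orbits, the point being that the two $SL_2(k)$-orbits in a generic fibre of $(I_2,I_4,I_6,I_{10})$, which are interchanged by $f\mapsto -f$ and distinguished by the sign of $I_{15}$, are fused into a single $GL_2(k)$-orbit by the scalar $\mu I$ with $\mu$ a primitive $12$th root of unity. As a consistency check, $\mathrm{trdeg}_k K=3$: this is one less than $\mathrm{trdeg}_k L=4$ (Lemma \ref{lem6} and Remark \ref{rem2}(a)) because the central torus acts nontrivially on $L$, so by the algebraic independence of $T_1,T_2,T_3$ (Remark \ref{rem2}(c)) the extension $K/k(T_1,T_2,T_3)$ is finite, and the weight argument above shows it is trivial.
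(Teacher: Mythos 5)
Your reduction to the residual torus is sound: since $k$ is algebraically closed, $GL_2(k)$ is generated by $SL_2(k)$ and scalars, so $K=L^{\mathbb{G}_m}$ where $L=\mathrm{Frac}(\mathcal R_6)$; the weight computation (a degree-$s$ homogeneous invariant has weight $3s$ in $\det$) is correct; and the identification $M^{\mathbb{G}_m}=k(T_1,T_2,T_3)$ for $M=k(I_2,I_4,I_6,I_{10})$ follows from Remark \ref{rem2}(a) exactly as you say. The gap is the step you yourself flag as the obstacle: the claim that $L=M(I_{15})$ is a degree-two extension generated by an odd-degree invariant with $I_{15}^2\in k[I_2,I_4,I_6,I_{10}]$. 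This is not a lemma you can quote inside this paper's framework --- it \emph{is} the classical Clebsch--Bolza--Igusa structure theorem for $\mathcal R_6$, obtained in characteristic $0$ by the ``complicated symbolic calculations'' of Clebsch and Bolza and in characteristic $p$ by Igusa's ``difficult techniques of algebraic geometry,'' i.e.\ precisely the results this paper is written to reprove by elementary means. Nothing available here produces it: Lemma \ref{lem6} gives only that $\mathcal R_6$ is a finite module over $k[I_2,I_4,I_6,I_{10}]$, hence $[L:M]<\infty$, with no control on the degree and no odd-degree generator. Pinning that degree down to $2$ amounts to showing that a generic fiber of $(I_2,I_4,I_6,I_{10})$ on $V_6/\!/SL_2(k)$ consists of exactly two $SL_2(k)$-orbits interchanged by scalars --- a separation statement essentially equivalent in depth to Theorem \ref{thm3} and Corollary \ref{cor1} themselves. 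So, as written, the argument imports a theorem at least as hard as the one being proved.

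The paper fills exactly this hole by a different, self-contained mechanism. Instead of analyzing $L$ over $M$, it restricts invariants to the normalized sextics $XY(X-Y)(X-c_1Y)(X-c_2Y)(X-c_3Y)$, obtaining $K\subseteq F\subset k(B_1,B_2,B_3)$ where $F$ is the fixed field of the $S_6$-action on $k(C_1,C_2,C_3)$ (Lemma \ref{lem9}), with $[k(B_1,B_2,B_3):F]=120$. It then shows $F=k(T_1,T_2,T_3)$ by a degree count: $N:=[k(B_1,B_2,B_3):k(T_1,T_2,T_3)]$ is a multiple of $120$, and the embeddings of $k(B_1,B_2,B_3)$ over $k(T_1,T_2,T_3)$ give projective solutions of the system $T_1J_2^2-J_4=T_2J_2^3-J_6=T_3J_2^5-J_{10}=0$, which together with the extra solution $(0,0,0,1)$ (Lemma \ref{lem5}) is bounded by Bezout: $N+1\le 4\cdot 6\cdot 10=240$, forcing $N=120$. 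That Bezout bound is the elementary substitute for knowing $I_{15}$; if you want to salvage your torus-weight approach, you would need to supply an independent, equally elementary proof that $[L:M]=2$ with an odd-weight generator, and no such proof is in sight short of redoing something like the paper's counting argument.
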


\noindent  Remark \ref{rem3} implies we only have to show $K = k(T_1, T_2, T_3)$. The proof occupies the remainder
of this section.

\begin{rem}\label{rem4}
If $\frac{R}{S} \in K$ with $R$ and $S$ coprime polynomials, then $\frac{R}{S} = \frac{R^{g}}{S^{g}}$ for every $g
\in GL_2(k)$. Since $R$ and $S$ are coprime we have $R = {c_g} R^g$ and $S = {c_g} S^{g}$ with $c_g \in k^{*}$ for
every $g \in GL_2(k)$. Hence $R$ and $S$ are homogeneous of same degree. The map $g \mapsto c_g$ is a group
homomorphism $GL_2(k) \to k^{*}$. Since $SL_2(k)$ is a perfect group, it is in its kernel. Thus $R$, $S \in
\mathcal R_6$.
\end{rem}

We introduce the following notations.
\begin{equation}
\begin{split}
& \mathcal U ^{(6)}   := \{ (p_1, p_2,  \dots  ,  p_6) : p_i \in \mathbb P^{1}, p_i \neq p_j \hspace{0.03in}
\forall i, j
\}\\
& \cA   := \{ f \in V_6 : I_{10}(f) \neq  0 \} \\
& \mathcal C := \{ (0, 1, \infty, c_1, c_2, c_3) : c_i \in k  - \{0,1 \}, c_i \neq c_j \hspace{0.03in} \forall i,j
\} \subseteq \mathcal U^{(6)}\\
& \cB := \{ f= X Y ( X - Y ) f_3 :
 f_3 = X^3-{b_1}X^2 Y+{b_2}XY^2-b_3 Y^3 = \\
 &  \quad  (X- c_1 Y)(X- c_2 Y)(X- c_3 Y), (0, 1, \infty, c_1, c_2, c_3)
\in \mathcal C  \}\\
\end{split}
\end{equation}
Then we have $k(\cB) = k(B_1, B_2, B_3)$ where $B_i$ is the function mapping $X Y (X-Y)(X^3-{b_1}X^2 Y+{b_2}
XY^2-b_3 Y^3)$ to $b_i$. Similarly $k(\mathcal C) = k(C_1, C_2, C_3)$.

$S_6$ acts on $\mathcal U^{(6)}$ by $(p_1, p_2,  \dots  ,  p_6) {\buildrel \tau \over  \mapsto} (p_{\tau(1)},
\dots ,  p_{\tau(6)})$ and $GL_2(k)$ acts on $\mathcal U^{(6)}$ by $(p_1, p_2,  \dots  ,  p_6) {\buildrel g \over
\mapsto} (g^{-1}(p_1),  \dots  ,  g^{-1}(p_6) )$.  These actions commute. This induces an action of $S_6$ on
$\mathcal U^{(6)}$ / $PGL_2(k)$. Each $PGL_2(k)$ orbit meets $\mathcal C$ in precisely one point. Therefore $
\mathcal U^{(6)}$ / $PGL_2(k) \cong \mathcal C $ and we have an action of $S_6$ on $\mathcal C$ and hence on
$k(C_1, C_2, C_3)$. If $\tau_{ij}$ is the transposition $(i,j)$, the $S_6$ action on $k ( C_1, C_2, C_3 )$ is
explicitly given as follows.

\medskip

(a). $ ( C_1, C_2, C_3) {\buildrel \tau_{12} \over  \longmapsto} ( 1- C_1, 1- C_2, 1- C_3)$\\

(b). $ (C_1, C_2, C_3) {\buildrel \tau_{23} \over  \longmapsto} ( \frac{C_1}{C_1 -1}, \frac{C_2}{C_2 -1},
\frac{C_3}{C_3 - 1})$.\\

(c). $ (C_1, C_2, C_3) {\buildrel \tau_{34} \over  \longmapsto} ( 1- C_1, \frac{C_2 (1 - C_1)}{C_2 - C_1},
\frac{C_3 (1- C_1)}{C_3 - C_1} ) $.\\

(d). $ (C_1, C_2, C_3) {\buildrel \tau_{45} \over  \longmapsto} ( C_2, C_1, C_3)$.\\

(e). $ (C_1, C_2, C_3) {\buildrel \tau_{56} \over  \longmapsto} ( C_1, C_3, C_2) $.\\

\smallskip

\noindent Let $F$ denote the fixed field of $S_6$ action on $k(C_1, C_2, C_3)$. The natural map $\mathcal C \to
\cB$ given by $$(0, 1, \infty, c_1, c_2, c_3) \mapsto X Y( X-Y )( X-c_1 Y )( X-c_2 Y )( X-c_3 Y)$$ induces a
Galois extension $C ( C_1, C_2, C_3 )$ / $k ( B_1, B_2, B_3 )$ with Galois group $S_3 < S_6$, where $S_3$ is
embedded as the subgroup of $S_6$ permuting the letters 4, 5, 6 and fixing 1, 2, 3.
\begin{figure}[ht!]
$$
\xymatrix{
& & k(C_1, C_2, C_3) \ar@{-}[d]^{S_3} & &\\
& & k(B_1, B_2, B_3) \ar@{-}[d]^{120} & & \\
& & F } $$
\end{figure}
\begin{lem}\label{lem9}
The inclusion $ \cB \subset V_6$ induces an embedding  $$K \subseteq F \subset k(B_1, B_2, B_3).$$
\end{lem}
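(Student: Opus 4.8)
The plan is to realize $K$ inside $k(B_1,B_2,B_3)$ by restricting invariant functions to the slice $\cB$, and then to identify the image with the $S_6$-fixed field $F$. First I would make the restriction map precise. By Remark \ref{rem4} every element of $K$ is of the form $R/S$ with $R,S \in \mathcal R_6$ homogeneous of the \emph{same} degree; in particular it is invariant under the scalar matrices and so depends on a sextic only up to scalar. Since $\cB \subseteq \cA$ consists of sextics with distinct roots, the locus $I_{10}\neq 0$ meets $\cB$, so restriction of rational functions from $V_6$ to the subvariety $\cB$ is defined as a rational map, giving a field homomorphism $K \to k(\cB) = k(B_1,B_2,B_3)$, $\phi \mapsto \phi|_{\cB}$.

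Next I would prove this map is an embedding, i.e.\ injective. Suppose $\phi \in K$ restricts to $0$ on $\cB$. Because $\phi$ is $GL_2(k)$-invariant it then vanishes on $g(\cB)$ for every $g \in GL_2(k)$. As every tuple of six distinct points is $PGL_2(k)$-equivalent to a point of $\mathcal C$, every sextic in $\cA$ is, up to a scalar, a $GL_2(k)$-translate of one in $\cB$; hence $\phi$ vanishes on the dense open set $\cA \subseteq V_6$, forcing $\phi = 0$. Thus $\phi \mapsto \phi|_{\cB}$ is an embedding $K \hookrightarrow k(B_1,B_2,B_3)$.

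The heart of the argument is to show the image lies in $F$. I would pull $\phi$ back along the normalization map $\mathcal C \to \cB \subset V_6$, $(0,1,\infty,c_1,c_2,c_3) \mapsto XY(X-Y)(X-c_1Y)(X-c_2Y)(X-c_3Y)$, obtaining $\Phi \in k(C_1,C_2,C_3)$; by construction $\Phi$ is the pullback of $\phi|_{\cB}$ along the $S_3$-quotient $\mathcal C \to \cB$, so it already lies in $k(C_1,C_2,C_3)^{S_3} = k(B_1,B_2,B_3)$. I then claim $\Phi$ is in fact $S_6$-invariant. Recall the $S_6$-action on $\mathcal C$ is defined through $\mathcal U^{(6)}/PGL_2(k) \cong \mathcal C$: for $\sigma \in S_6$ the point $\sigma\cdot c$ is the unique representative in $\mathcal C$ of the $PGL_2(k)$-orbit of the reordered tuple. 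Reordering the six roots leaves the associated sextic literally unchanged, and replacing a tuple by another in the same $PGL_2(k)$-orbit changes its sextic by an element of $GL_2(k)$ together with a scalar; since $\phi$ is invariant under both $GL_2(k)$ and scalars, $\Phi(\sigma\cdot c) = \Phi(c)$. Hence $\Phi \in k(C_1,C_2,C_3)^{S_6} = F$, and as $S_3 \le S_6$ gives $F \subseteq k(B_1,B_2,B_3)$, this shows $\phi|_{\cB} \in F$.

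Combining the three steps yields the embedding $K \subseteq F \subset k(B_1,B_2,B_3)$. The step I expect to require the most care is the $S_6$-invariance of $\Phi$: the action on $\mathcal C$ is only defined after a $PGL_2(k)$-normalization, and carrying a permuted tuple back to its normalized sextic introduces a scalar factor, so the argument genuinely uses that elements of $K$ are ratios of equal-degree invariants and hence scalar-invariant, not merely $SL_2(k)$-invariant. The explicit transposition formulas (a)--(e) are not needed for this argument; the abstract symmetry of the root-to-sextic map suffices.
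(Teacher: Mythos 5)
Your overall route is the same as the paper's: restrict invariant functions to the slice $\cB$, pull back along $\mathcal C \to \cB$, and prove $S_6$-invariance of the pullback using the $PGL_2(k)$-normalization together with the fact that elements of $K$ are ratios of equal-degree homogeneous invariants and hence scalar-invariant. Your third paragraph is precisely the paper's displayed computation, with the scalar subtlety made commendably explicit.

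The flaw is in your first step. That the restriction map $K \to k(\cB)$ is well defined does not follow from ``the locus $I_{10}\neq 0$ meets $\cB$'': the polar locus of $\phi = R/S \in K$ is $\{S=0\}$, not $\{I_{10}=0\}$, and it can very well meet $\cA$ (e.g.\ $I_2^{2}/I_4 \in K$ has poles inside $\cA$); moreover, for a general rational function on $V_6$ restriction to $\cB$ can genuinely fail --- $1/A_0$ has no restriction to $\cB$, since $A_0 \equiv 0$ there. What well-definedness actually requires is exactly the argument you spend on ``injectivity'': by Remark \ref{rem4} the denominator satisfies $S = c_g S^{g}$ for all $g \in GL_2(k)$, so if $S$ vanished identically on $\cB$ it would vanish on every $GL_2(k)$-translate of $\cB$, hence on the open dense set $\cA$, forcing $S \equiv 0$, a contradiction. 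This is how the paper proves the restriction exists. Injectivity, by contrast, needs no argument at all: any homomorphism of fields is injective. Note also that your injectivity paragraph cannot simply be relocated verbatim, because the hypothesis ``$\phi$ restricts to $0$ on $\cB$'' already presupposes that the restriction exists; the density argument must be applied to the polynomial $S$, not to $\phi$. With that single rearrangement your proof is correct and coincides with the paper's.
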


\begin{proof}
$\cB \subset \cA$ and every element in $\cA$ is $GL_2(k)$- conjugate to a unique element in $\cB$. Recall by Remark
\ref{rem4}, if $\frac{R}{S} \in K$ with $R$ and $S$ coprime polynomials, then $S = c_g{S^g}$ for all $g \in
GL_2(k)$. If $S$ vanishes on $\cB$, it also vanishes on $\cA$. But $\cA$ is open in $ k ^{6}$ and so  $S \equiv 0$
which is a contradiction. Therefore $S$ does not vanish on $\cB$ and hence the restriction map $K \to k(\cB)$ is
well defined. Thus we have $K \subset k(\cB) \subset k(\mathcal C)$. Let $I \in K$ and $\bar I $ its image in
$k(\mathcal C) = k(k_1, C_2, C_3)$. Denote $ p = (0 ,   1, \infty, c_1, c_2, c_3) \in \mathcal U ^{(6)} $ by $(p_1,
\dots  ,  p_6)$. For $\tau \in S_6$ we have
\begin{equation*}
\begin{split}
\bar I (p^{\tau}) & =  \bar I ( g(p_{\tau(1)}),  \dots  ,  g(p_{\tau(6)})) \\
 & = I ((X-g(p_{\tau(1)})Y)  \dots (X-g(p_{\tau(6)})Y))  \\
 &  = I( (X-p_{\tau(1)}Y)  \dots  (X-p_{\tau(6)}Y)) = \bar I(p)
\end{split}
\end{equation*}
for some $g \in GL_2(k)$ and so the lemma follows.
\end{proof}

Let us now see how the elements  $T_i$ of $K$ embed in $k(B_1, B_2, B_3)$. Evaluating $I_{2i}$ on sextics of the
form $XY(X-Y)(b_0X^3-b_1X^2 Y+b_2XY^2-b_3 Y^3)$ yields the following homogeneous polynomials $J_{2i}$ in $B_0,
 \dots  ,   B_3$ of degree $2 i$.

\begin{tiny}
\begin{equation}
\begin{split}
J_2 &    = -12B_0B_3+8B_0B_2+2B_1B_2+8B_1B_3-3B_1^2-3B_2^2 \\
J_4 & = -432B_0B_2B_1^2+608B_3B_1^2B_2-312B_0B_3B_1^2-1728B_0^2B_3B_2+960B_0B_2B_3^2-432B_1B_3B_2^2 -312B_0B_3B_2^2 -1728B_0B_3^2B_1 +608B_0B_1B_2^2\\
& +960B_3B_0^2B_1-2800B_0B_3B_1B_2+7056B_0^2B_3^2+528B_0B_2^3+528B_3B_1^3+256B_0^2B_2^2-122B_1^2B_2^2+256B_1^2B_3^2
-108B_1^3B_2-108B_1B_2^3+81B_1^4+81B_2^4  \\
J_6 &= -36B_3B_1^3B_2^2+118B_0^3B_3^2B_2-24B_0B_1^3B_2^2-8B_1^2B_2^2B_3^2-36B_0B_1^2B_2^3-24B_0^3B_2^3
-124B_0^3B_3^3-24B_1^3B_3^3-136B_3B_0^2B_2^3+8B_1B_2^4B_3+52B_1^3B_2B_3^2\\
& +36B_0B_1B_2^4-32B_3B_0^3B_2^2-32B_0B_1^2B_3^3-100B_3^2B_0^4-B_1^6-B_2^6
 -40B_0B_2^3B_3^2-10B_0^3B_3^2B_1+28B_0B_2^4B_3+8B_0B_1^4B_2-8B_0^2B_1^2B_2^2-38B_0^2B_2^2B_3^2\\
&
+140B_3B_0^3B_2B_1-100B_0^2B_3^4+36B_3B_1^4B_2-136B_0B_1^3B_3^2-38B_0^2B_1^2B_3^2-40B_3B_0^2B_1^3+52B_0^2B_1B_2^3-10B_0^2B_2B_3^3
 +118B_0^2B_1B_3^3-24B_3B_1^2B_2^3+28B_3B_0B_1^4\\
&-32B_0B_2^5-32B_3B_1^5+2B_1^5B_2+9B_1^4B_2^2-12B_1^3B_2^3 +9 B_1^2B_2^4+2B_1B_2^5+32B_0^2B_2^4+32B_1^4B_3^2+150B_0B_1^3B_2B_3-72B_0B_1^2B_2^2B_3-178B_0B_1^2B_2B_3^2\\
& +150B_0B_1B_2^3B_3-66B_0B_1B_2^2B_3^2-66B_3B_0^2B_1^2B_2-178B_3B_0^2B_1B_2^2+508B_0^2B_1B_2B_3^2+140B_0B_1B_2B_3^3\\
J_{10} & =
-37540800B_0^4B_3^5B_1-37540800B_0^5B_3^4B_2+148500B_0^3B_3^3B_2^4+148500B_0^3B_3^3B_1^4
-4028400B_0^4B_3^4B_1^2-860400B_0^2B_3^4B_1^4+5308200B_0^3B_3^4B_1^3\\
& +6696000B_0^5B_3^4B_1+6696000B_3^5B_0^4B_2+5308200B_3^3B_0^4B_2^3
-860400B_3^2B_0^4B_2^4-27000B_0^3B_3^4B_2^3-27000B_0^3B_3^2B_2^5
-25600B_0^2B_3^5B_1^3-100800B_0^3B_3^5B_1^2\\
& -44287200B_0^4B_3^4B_1B_2 -100800B_0^5B_3^3B_2^2-25600B_0^5B_3^2B_2^3-27000B_0^2B_3^3B_1^5
-27000B_0^4B_3^3B_1^3-4028400B_0^4B_3^4B_2^2-1854600B_0^3B_3^3B_1^2B_2^2\\
&
-543600B_0^3B_3^3B_1B_2^3+7719000B_0^3B_3^4B_1^2B_2+7719000B_0^4B_3^3B_1B_2^2-19800B_0^3B_3^2B_1^3B_2^2-543600B_0^3B_3^3B_1^3B_2
 +72600B_0^3B_3^2B_1^2B_2^3+142200B_0^3B_3^2B_1B_2^4\\
& -1225800B_0^3B_3^4B_1B_2^2+142200B_0^2B_3^3B_1^4B_2+351734400B_0^5B_3^5 +72600B_0^2B_3^3B_1^3B_2^2
-19800B_0^2B_3^3B_1^2B_2^3+146400B_0^4B_3^2B_1B_2^3+146400B_0^2B_3^4B_1^3B_2\\
& -18400B_0^2B_3^2B_1^3B_2^3+3600B_0^2B_3^4B_1^2B_2^2+3600B_0^4B_3^2B_1^2B_2^2+3600B_0^2B_3^2B_1^4B_2^2+3600B_0^2B_3^2B_1^2B_2^4-1225800B_0^4B_3^3B_1^2B_2\\
& -1080000B_3^6B_0^4-1080000B_0^6B_3^4+216000B_0^3B_3^5B_1B_2+216000B_0^5B_3^3B_2B_1\\
\end{split}
\end{equation}
\end{tiny}

Now the $T_i$ embed in $k(B_1, B_2, B_3)$ as follows
\begin{equation}
T_1  = \frac{J_4 (1,B_1, B_2, B_3)}{J_2 ^{2} (1,B_1, B_2, B_3)}, \, \, T_2 = \frac{J_6 (1,B_1, B_2, B_3)}{J_2 ^{3}
(1,B_1, B_2, B_3)}, \, \, T_3  = \frac {J_{10} (1,B_1, B_2, B_3)} {J_2 ^{5} (1,B_1, B_2, B_3)}
\end{equation}
Since $T_1$, $T_2$ and $T_3$ are independent variables over $k$ and $k(T_1,T_2,T_3,B_i) \subseteq k(B_1, B_2,
B_3)$ for $i$ = 1, 2 and 3, it follows $k(B_1, B_2, B_3)$ / $k(T_1, T_2, T_3)$ is a finite algebraic extension.
Also note $k(C_1, C_2, C_3)$ / $F$ is Galois  with group $S_6$ and $[k(B_1, B_2, B_3): F] = 120 $.

\medskip

{\it Proof of Theorem \ref{thm3}.} We know $k (T_1, T_2, T_3) \subseteq K \subseteq F$. The claim follows if $F =
k(T_1, T_2, T_3)$. Also $N:= [k(B_1,B_2,B_3): k(T_1,T_2,T_3)] $ is a multiple of $ [k(B_1,B_2,B_3) : F] = 120$.
Therefore,  if $N = 120 $, we are done. Let $\Omega$ be the algebraic closure of $ k(T_1, T_2, T_3)$. Then $N$ is
the number of embeddings $\alpha$ of $k(B_1, B_2, B_3)$ into $\Omega$ with $\alpha_{|k(T_1, T_2, T_3)} = id$.
Therefore,  the tuples $$(1, \alpha (B_1), \alpha (B_2), \alpha (B_3))$$ constitute $N$ distinct projective
solutions for the following system of homogeneous equations in $S_0, S_1, S_2, S_3$.
\begin{equation*}
\begin{split}
 T_1{J_2^{2} (S_0, S_1, S_2, S_3)} - {J_4 (S_0, S_1, S_2, S_3)} = 0\\
 T_2{J_2^{3} (S_0, S_1, S_2, S_3)} - {J_6 (S_0, S_1, S_2, S_3)} = 0  \\
 T_3{J_2^{5} (S_0, S_1, S_2, S_3)} - {J_{10}(S_0, S_1, S_2, S_3)} = 0\\
\end{split}
\end{equation*}
Besides these $N$ solutions there is the additional solution (0,0,0,1) by Lemma \ref{lem5}. Recall $J_{2i}$ are
homogeneous polynomials of degree $2i$. Therefore by Bezout's theorem,  $ N+1 \leq 4\cdot 6\cdot 10 = 240$. Hence,
$ N $ being a multiple of 120, must equal 120. This proves $ F = k(T_1,T_2,T_3) $.

\subsection{The Ring of Invariants of $GL_2(k)$  in $ k [A_0,  \dots  ,  A_6, I_{10}^{-1} ]$ }
\begin{thm}\label{thm4}
$\mathcal R = k\, [U_1, U_2,  \dots  ,  U_8]$ is the ring of $ \, GL_2(k)$-invariants \\
in  $k\, [A_0,  \dots  , A_6, I_{10}^{-1}]$.
\end{thm}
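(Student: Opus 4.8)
The plan is to identify the $GL_2(k)$-invariants in $k[A_0,\dots,A_6,I_{10}^{-1}]$ with a single graded piece of the localized invariant ring $\mathcal R_6[I_{10}^{-1}]$, and then to pin that piece down by an integrality argument resting on the normality of $\mathcal R$ (Lemma \ref{lem8}). First I would dispose of the easy inclusion $\mathcal R \subseteq (k[A_0,\dots,A_6,I_{10}^{-1}])^{GL_2(k)}$: each $U_i$ is a ratio of products of the $SL_2(k)$-invariants $I_2,I_4,I_6$ by a power of $I_{10}$, it is homogeneous of degree $0$ in $A_0,\dots,A_6$, and it is therefore fixed both by $SL_2(k)$ and by the scalar matrices, hence by all of $GL_2(k)$.

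For the reverse inclusion the first step is to describe the target precisely. Since $k$ is algebraically closed, every $g\in GL_2(k)$ factors as an element of $SL_2(k)$ times a scalar matrix, so $GL_2(k)=SL_2(k)\cdot Z$ with $Z$ the center. A $GL_2(k)$-invariant $J\in k[A_0,\dots,A_6,I_{10}^{-1}]$ is in particular $SL_2(k)$-invariant; writing $J=P/I_{10}^m$ and using that $I_{10}$ is $SL_2(k)$-invariant forces $P\in\mathcal R_6$, so $J\in\mathcal R_6[I_{10}^{-1}]$. The ring $\mathcal R_6[I_{10}^{-1}]$ carries a $\mathbb Z$-grading induced from the degree grading on $k[A_0,\dots,A_6]$ (with $\deg I_{10}^{-1}=-10$), and invariance under $Z$, whose elements scale every $A_i$ by a common factor, says exactly that $J$ lies in the degree-$0$ component $(\mathcal R_6[I_{10}^{-1}])_0$. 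Thus the theorem reduces to the single claim $(\mathcal R_6[I_{10}^{-1}])_0=\mathcal R$.

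The key step is then an integrality argument. By Lemma \ref{lem6}, $\mathcal R_6$ is a finite module over $k[I_2,I_4,I_6,I_{10}]$, so after inverting $I_{10}$ the ring $\mathcal R_6[I_{10}^{-1}]$ is a finite, hence integral, extension of $A:=k[I_2,I_4,I_6,I_{10},I_{10}^{-1}]$. I would pass to degree-$0$ parts: if a degree-$0$ element $t$ satisfies a monic equation over $A$, taking the degree-$0$ homogeneous component of that equation produces a monic equation for $t$ with coefficients in $A_0$, so $(\mathcal R_6[I_{10}^{-1}])_0$ is integral over $A_0$. Now $A_0$ is spanned by the monomials $I_2^aI_4^bI_6^cI_{10}^{-d}$ with $a+2b+3c=5d$, which all lie in $\mathcal R$ by Lemma \ref{lem7}; since conversely $\mathcal R\subseteq A_0$, we get $A_0=\mathcal R$. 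Hence every $J\in(\mathcal R_6[I_{10}^{-1}])_0$ is integral over $\mathcal R$.

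Finally I would invoke normality. Such a $J$ is a $GL_2(k)$-invariant rational function, so $J\in K=k(U_1,U_2,U_3)$ by Theorem \ref{thm3} and Remark \ref{rem3}; as $U_1,U_2,U_3$ are algebraically independent and the remaining $U_j$ lie in $k(U_1,U_2,U_3)$, this field is exactly $\mathrm{Frac}(\mathcal R)$. Thus $J$ is an element of $\mathrm{Frac}(\mathcal R)$ integral over $\mathcal R$, and $\mathcal R$ is integrally closed by Lemma \ref{lem8}, so $J\in\mathcal R$, completing the proof. The step I expect to be the main obstacle is the first reduction rather than the integrality endgame: one must correctly match the scalar (center) action with the degree grading and confirm that the $SL_2(k)$-invariants of the localization really are the localization of $\mathcal R_6$, since everything afterwards depends on having cleanly identified the target with $(\mathcal R_6[I_{10}^{-1}])_0$.
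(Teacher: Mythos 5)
Your proposal is correct and takes essentially the same approach as the paper: both arguments reduce the theorem to showing that the $GL_2(k)$-invariants of the localization are integral over $\mathcal R$ (using Lemma \ref{lem6} for integrality over $k[I_2,I_4,I_6,I_{10}]$ and Lemma \ref{lem7} to place the degree-zero coefficients in $\mathcal R$), and then conclude via $K = \mathrm{Frac}(\mathcal R)$ (Theorem \ref{thm3}) and normality of $\mathcal R$ (Lemma \ref{lem8}). The only difference is organizational: you package the integrality step as passing to degree-zero graded pieces of the localized finite extension, while the paper writes each invariant as $I/I_{10}^{e}$ via Remark \ref{rem4} and divides a homogenized monic equation by $I_{10}^{en}$ --- the same computation.
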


\begin{proof}
Let ${\mathcal R}_0 = k\, [A_0,  \dots  ,   A_6, I_{10}^{-1}]^{GL_2(k)}$. If $\frac{R}{S} \in \mathcal R_0$ with
$R$ and $S$ coprime polynomials, then $R$ and $S$ are homogeneous elements of same degree in $\mathcal R_6$ by
Remark \ref{rem4}. Since $S$ divides $I_{10} ^{e}$ (for some $e$) in $k\, [A_0,  \dots  ,   A_6]$, we have $SS^{'}
= I_{10}^{e}$ with $S^{'} \in \mathcal R_6$. Thus $\frac{R}{S} = \frac{RS^{'}}{I_{10}^{e}}= \frac{I}{I_{10}^{e}}$
with $I \in \mathcal R_6$.

We have $\mathcal R_{0} \subset K$. By Theorem \ref{thm3} we know $K$ is the field of fractions of $\mathcal R$.
By Lemma \ref{lem8} we know $\mathcal R$ is normal. Since  $\mathcal R \subseteq \mathcal R_{0} \subset K$, it
only remains to prove ${\mathcal R}_{0}$ is integral over $\mathcal R$. Let $u \in \mathcal R_0$. Then by the
preceding paragraph, $u= \frac{I}{I_{10}^{e}} $ with $I \in \mathcal R_6$. Thus $deg(I) = 10e$. Lemma \ref{lem6}
implies we have an equation
$$I^n + p_{n-1} I^{n-1} +  \dots  + p_0 = 0$$
where $p_i \in k\, [I_2,  \dots  ,   I_{10}]$. By dropping all terms of $degree \neq deg(I^n)$, we may assume
$p_i$ are homogeneous. Dividing by $I_{10}^{en}$ we have
$$u^n + \frac{p_{n-1}}{I_{10}^{e}} u^{n-1} +   \dots  + \frac{p_0} {I_{10}^{en}} = 0$$
where the coefficients lie in $\mathcal R$ by Lemma \ref{lem7}. This proves $\mathcal R_{0}$ is integral over
$\mathcal R$.
\end{proof}

\begin{cor}\label{cor1} (\textbf{Clebsch-Bolza-Igusa})
Two binary sextics $f$ and $g$ with $I_{10} \neq 0$ are $GL_2(k)$ conjugate if and only if there exists an $r \neq
0$ in $k$ such that for every $i$ = 1, 2, 3, 5 we have
\begin{equation}\label{eq9}
I_{2i}(f) = r^{2i} \, I_{2i}(g)
\end{equation}
\end{cor}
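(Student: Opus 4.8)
The plan is to derive the corollary from Theorem~\ref{thm4}, which identifies $\mathcal R = k\,[U_1,\dots,U_8]$ as the full ring of $GL_2(k)$-invariants in $k\,[A_0,\dots,A_6,I_{10}^{-1}]$. First I would observe that the forward direction is immediate: if $f = g^h$ for some $h \in GL_2(k)$, then by Lemma~\ref{lem3} each $I_{2i}$ is homogeneous of degree $2i$ and transforms under the coordinate change by a power of $\det(h)$; writing $r^2 = \det(h)^{\text{(appropriate power)}}$ one checks that $I_{2i}(f) = r^{2i}I_{2i}(g)$ holds simultaneously for $i=1,2,3,5$ with a single scalar $r \neq 0$ (the constraint linking the four equations is exactly that the same $r$ works for all $i$, reflecting the single parameter $\det h$).

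For the converse, suppose $f,g$ have $I_{10}\neq 0$ and satisfy $I_{2i}(f) = r^{2i}I_{2i}(g)$ for $i=1,2,3,5$ with $r\neq 0$. The key point is that these relations force every element of $\mathcal R$ to agree on $f$ and $g$. Indeed, each generator $U_j$ is a ratio of monomials in $I_2,I_4,I_6$ over a power of $I_{10}$, built so that numerator and denominator have the same degree in $A_0,\dots,A_6$; consequently the factors of $r^{2i}$ cancel and $U_j(f) = U_j(g)$ for every $j$. Hence every $GL_2(k)$-invariant regular function on the locus $I_{10}\neq 0$ takes the same value at $f$ and at $g$.

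The heart of the argument is then to conclude that two sextics in $\cA = \{I_{10}\neq 0\}$ lying in the same fibre of all $GL_2(k)$-invariants must actually be in the same $GL_2(k)$-orbit. Here I would invoke the geometric picture already set up in Lemma~\ref{lem9}: every element of $\cA$ is $GL_2(k)$-conjugate to a \emph{unique} normalized form in $\cB$, so the orbit space of $\cA$ under $GL_2(k)$ is parametrized by $\cB \cong \{(B_1,B_2,B_3)\}$ modulo the residual $S_6$-action, and Theorem~\ref{thm3} shows the invariant field $K = k(T_1,T_2,T_3)$ separates these orbits generically. Since $\mathcal R_0 = k\,[A_0,\dots,A_6,I_{10}^{-1}]^{GL_2(k)} = \mathcal R$ by Theorem~\ref{thm4}, and since $\mathcal R$ has $\mathrm{Frac}(\mathcal R)=K$ of transcendence degree $3$ matching $\dim \cA /\!/ GL_2(k)$, the invariants in $\mathcal R$ separate the closed orbits in $\cA$; as all orbits in $\cA$ are closed (the stabilizers are finite and $I_{10}\neq 0$ keeps us off the null cone), agreement of all $U_j$ on $f$ and $g$ yields $f = g^h$ for some $h\in GL_2(k)$.

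The main obstacle I anticipate is making the orbit-separation step fully rigorous: one must verify that the invariants in $\mathcal R$ genuinely separate points of $\cA$ and not merely generic points, i.e.\ that distinct orbits cannot be collapsed by the $U_j$. The cleanest route is to reduce to the normalized family $\cB$, where by Lemma~\ref{lem9} the restriction map $K \to k(B_1,B_2,B_3)$ is injective with image $F$, the fixed field of $S_6$; two tuples $(B_1,B_2,B_3)$ giving the same values of $T_1,T_2,T_3$ then lie in one $S_6$-orbit, which corresponds precisely to relabeling the six roots and hence to a single $GL_2(k)$-conjugacy class of sextics. Translating the scalar condition~\eqref{eq9} into equality of the $U_j$ and then into equality in $F$ is the technical crux, but it follows from the explicit degree-homogeneity of the $U_j$ together with the already-established equality $\mathcal R_0 = \mathcal R$.
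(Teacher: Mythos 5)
Your forward direction and your observation that \eqref{eq9} forces $U_j(f)=U_j(g)$ for all $j$ are correct and agree with the paper. The genuine gap is the orbit-separation step, which is the entire content of the converse, and neither of your two justifications for it is a proof. First, ``the invariants in $\mathcal R$ separate the closed orbits in $\mathcal A$'' does not follow from $\mathrm{Frac}(\mathcal R)=K$ having transcendence degree $3$: matching dimensions at best makes the quotient map generically finite and never by itself implies that invariants separate points. If you want this route, you must quote the actual separation theorem of geometric invariant theory (invariants of a geometrically reductive group separate disjoint closed invariant subsets of an affine variety) and actually prove that $GL_2(k)$-orbits are closed in $\mathcal A$; your parenthetical about finite stabilizers gestures at this but is not carried out, and it would in any case be far heavier machinery than the paper needs. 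Second, your ``cleanest route'' claim --- that two tuples in $\mathcal C$ taking the same values of $T_1,T_2,T_3$ lie in one $S_6$-orbit --- is exactly the statement to be proven, and it does not follow from $F=k(T_1,T_2,T_3)$. Field equality gives only \emph{generic} separation: an element of $F$ separating two given $S_6$-orbits is a rational function in $T_1,T_2,T_3$ whose denominator may vanish at precisely the two points in question. (There is also a smaller defect: when $I_2(f)=0$ the $T_i$ are not even defined at $f$; the hypothesis \eqref{eq9} yields equality of the $U_j$, whose denominators are powers of $I_{10}$, not equality of the $T_i$.)

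What is missing is the paper's explicit construction bridging generic and pointwise separation. Since the two $S_6$-orbits of the normalized root tuples $\mathbf{a},\mathbf{b}\in\mathcal C$ are \emph{finite} sets, there is a polynomial $p$ with $p\equiv 0$ on the orbit of $\mathbf{a}$ and $p\equiv 1$ on the orbit of $\mathbf{b}$; averaging over $S_6$ (legitimate because $\mathrm{char}(k)\neq 2,3,5$, so $|S_6|=720$ is invertible) produces $s\in F$ with $s(\mathbf{a})=0$ and $s(\mathbf{b})=1$. The crucial regularity step is to check that the denominators of all $S_6$-translates of $p$ divide a power of $J_{10}(1,B_1,B_2,B_3)$, so that $s\in k[B_1,B_2,B_3,J_{10}^{-1}]$; consequently the preimage of $s$ in $K$ (via the isomorphism $K\cong F$ of Theorem~\ref{thm3}) is defined at every point of $\mathcal B$, hence by $GL_2(k)$-invariance at every point of $\mathcal A$, hence lies in $k[A_0,\dots,A_6,I_{10}^{-1}]^{GL_2(k)}=k[U_1,\dots,U_8]$ by Theorem~\ref{thm4}. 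Only at this point does your correct observation that all $U_j$ agree on $f$ and $g$ force $s(\mathbf{a})=s(\mathbf{b})$, the desired contradiction. Without this averaging-plus-regularity argument (or, alternatively, a properly cited GIT separation theorem together with a proof that orbits in $\mathcal A$ are closed), your proposal assumes the separation property it is supposed to establish.
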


\begin{proof}
The only if part is clear.  Now assume Eq.~\eqref{eq9} holds. First note that we can assume the sextics to be of
the form $f(X,Y) = XY(X-Y)(X - a_1 Y)( X - a_2  Y)( X - a_3  Y) $ and $g (X,Y)  = XY(X-Y)(X - b_1  Y)(X - b_2 Y)(X
- b_3 Y)$ because every element in  $\cA$ is $GL_2(k)$ conjugate to  a  element in  $\cB$.  Now suppose  that they
are not  $GL_2(k)$ conjugate. Then  $ {\bf a} :=  ( a_1, a_2, a_3  ) $ and ${\bf  b} := ( b_1, b_2, b_3 )$ belong
to different $S_6$ orbits on $\mathcal C$ and these orbits are finite subsets of $ k^{3} $. Therefore there exists
a polynomial $ p(C_1, C_2, C_3) $ such that for all $ \tau \in S_6 $, we have $ p({\bf  a}^ \tau) =0 $ and $
p({\bf b}^ \tau) =  1 $. Consider the element  $s(C_1, C_2,  C_3) \in k\, [\mathcal  C] = k\, [C_1,  C_2, C_3,
\frac{1}{C_i}, \frac{1}{C_i - 1},  \frac{1}{C_i - C_j}]$ $(i,j = 1,2,3 \hspace{0.03in} and \hspace{0.03in} i \neq
j)$ given as
$$ s= \frac{1}{|S_6|} \sum_ {\substack {\tau \in S_6}} p({(C_1, C_2, C_3)}^ \tau).$$
Then $s$ takes the value 0 on ${\bf a}$ and 1 on ${\bf b}$. Clearly $s \in F = k(T_1, T_2, T_3) = k (U_1, U_2,
U_3) $. Let $q$ be a rational function in the  $S_6$ orbit of $p$. Then  from the explicit formulas for the $S_6$
action described earlier, we see  that the denominator of $q$ is a  product of the factors $C_i$, $C_i -1$,  $C_i
- C_j$ for all  $i$, $j$  =  1,2,3 and  $i  \neq j$.

The  sum  $ Q  = \sum_{\sigma \in S_3} q((C_1, C_2, C_3)^{\sigma})$ can be written as a quotient of  two symmetric
polynomials in $C_1$,  $C_2$, $C_3$. The denominator  is  a  product  of  factors mentioned  in  the  previous
paragraph and hence divides a  power of $J_{10} (1, B_1,$ $ B_2, B_3)$ in the ring  $k\, [B_1, B_2, B_3]$ ;  this
is because  $J_{10}(1, B_1, B_2, B_3)$ factors in $k\, [C_1, C_2, C_3]$ as
$$C_1^2C_2^2C_3^2(C_1-1)^2(C_2-1)^2(C_3-1)^2(C_1-C_2)^2(C_2-C_3)^2(C_3-C_1)^2.$$
Thus $Q \in k\, [B_1, B_2, B_3, J_{10}^{-1}]$ and hence $s \in k\, [B_1, B_2, B_3, J_{10}^{-1}]$.

Since $K = k(A_0,  \dots   ,   A_6)^{GL_2(k)} \cong k(C_1, C_2, C_3)^{S_6} = F$  by Theorem  \ref{thm3},  the
inverse image of $s$  in $K$  is a  rational function in $A_0$,  \dots  ,   $A_6$ which is defined at each point
of $\cB$ by the previous paragraph. Thus it is defined at  each point of $\cA$ because   it is
$GL_2(k)$-invariant. Therefore it lies   in $k\, [\cA]^{GL_2(k)}  =  k\, [A_0,    \dots   ,    A_6,
I_{10}^{-1}]^{GL_2(k)}  = \mathcal R$. But $\mathcal R = k\, [U_1,  \dots  ,   U_8]$ by Theorem 4. On the other
hand Eq.~\eqref{eq9}  implies that each $U_i$ takes the same value on $f$ and $g$. This implies $s$ takes  the
same value on ${\bf a}$ and ${\bf b}$, contradicting $s({\bf a}) = 0$ and $s({\bf b}) = 1$. This proves the claim.

\end{proof}

\section{Projective Invariance of unordered pairs of binary cubics}

\subsection{Null Cone of $ V_3 \bigoplus V_3$.}

In this  chapter $k$  is an algebraically  closed field  with $char(k) \neq 2, 3$. The Representation (see section
2.1) of $GL_2(k)$ in $V_3$ induces  a representation  of $GL_2(k)$  in $V_3  \bigoplus  V_3$. Let $\Gamma _0 $
$(\cong k^{*})$be the group of  maps $(f,g) \mapsto (cf, c^{-1}g)$, $c \in k^{*}$ on $V_3  \bigoplus V_3$. Let
$\Gamma $ be the semi-direct product  of $\Gamma _0$  and $< \nu  >$, where $\nu  : V_3 \bigoplus V_3  \to V_3
\bigoplus  V_3$ is $(f,g) \mapsto  (g,f)$. Then $\Gamma$ centralizes the $GL_2(k)$ action. Therefore we have an
action of $GL_2(k) \times \Gamma$ on $V_3 \bigoplus V_3$. The coordinate ring of $V_3 \bigoplus V_3 $ can be
identified with $k\, [{ A}_0,  \dots ,   { A}_3, {B}_0,   \dots ,   {  B}_3]$  where ${  A}_i$  and ${  B}_i$ are
coordinate functions  on  $V_3  \bigoplus  V_3$.  Let  $D_f$  and  $D_g$  be  the discriminants of the cubics
$f(X,Y) = A_0 X^3 + A_1 X^2Y + A_2 XY^2 + A_3  Y^3$ and  $g(X,Y) =  B_0 X^3  +  B_1 X^2Y  + B_2  XY^2 +B_3  Y^3$
respectively. Let $R$ be their resultant.

This  gives  the following  $SL_2(k)  \times \Gamma_0$-invariants  in  $k\, [A_0, \dots , A_3,  B_0, \dots , B_3]$
of   degree  4,6  and  8  respectively. $I = I_2(fg)$, $R$ and $D = D_f  D_g$. Further the skew symmetric form on
$V_3$ yields a $SL_2(k) \times \Gamma _0$-invariant  $H$ of degree 2. These are listed below.

\begin{small}
\begin{equation}
\begin{split}
H &= 3 {A}_0{{ B}_3} - { A}_1{{ B}_2} + { A}_2{{ B}_1} - 3 { A}_3{{ B}_0}\\
I &= 228{{ A}_0}{{ B}_0}{{ A}_3}{{ B}_3}-52{{ A}_1}{{ B}_0}{{ A}_3}{{ B}_2}-24{{ A}_1}{{ B}_0}{{ A}_2}{{ B}_3}-24{{ A}_0}{{ B}_1}{{ A}_3}{{ B}_2}-52{{ A}_0}{{ B}_1}{{ A}_2}{{ B}_3}\\
& +4{{ A}_2}{{ B}_0}{{ A}_3}{{ B}_1}+16{{ A}_2}^2{{ B}_0}{{ B}_2}+16{{ A}_1}{{ B}_1}^2{{ A}_3}+4{{ A}_1}{{ B}_1}{{ A}_2}{{ B}_2}+16{{ A}_1}^2{{ B}_1}{{ B}_3}\\
& +16{{ A}_0}{{ B}_2}^2{{ A}_2}+4{{ A}_0}{{ B}_2}{{ A}_1}{{ B}_3}-6{{ A}_3}^2{{ B}_0}^2-6{{ A}_2}^2{{ B}_1}^2-6{{ A}_1}^2{{ B}_2}^2-6{{ A}_0}^2{{ B}_3}^2 \\
R & = 3B_0^2A_0B_3A_3^2-B_0^3A_3^3+2B_0^2A_3^2B_2A_1-B_2^2B_0A_1^2A_3-A_0^2B_2^3A_3\\
& + B_0^2A_2B_1A_3^2-B_0^2A_2^2B_2A_3-B_1^2B_0A_1A_3^2+A_0B_1^3A_3^2-3B_0A_0^2B_3^2A_3\\
& -B_0A_1^3B_3^2+A_0^3B_3^3+B_0^2A_2^3B_3-B_0A_0B_3B_2A_1A_3+3A_0^2B_3B_2B_1A_3+B_0A_3B_3A_0B_1A_2\\
& +3B_0A_0B_3^2A_1A_2-2A_0^2B_3^2B_1A_2-3B_0A_3^2B_2A_0B_1-3B_0^2A_3B_3A_1A_2-B_2A_1A_0^2B_3^2\\
& +B_2^2A_1A_0B_1A_3+B_2B_0A_1^2B_3A_2-B_2A_1B_3A_0B_1A_2+A_0^2B_2^2A_2B_3+2B_0A_0B_2^2A_2A_3\\
& -2B_0A_0B_2B_3A_2^2-2B_1^2A_1A_3A_0B_3+B_1A_1^2B_3^2A_0+2B_1B_0A_1^2B_3A_3+B_1B_0A_1B_2A_2A_3\\
& -B_1 B_0 A_1 B_3 A_2^2-A_0 B_1^2 B_2 A_2 A_3+A_0 B_1^2 B_3 A_2^2\\
D & = (-27A_0^2A_3^2+18A_0A_3A_2A_1+A_1^2A_2^2-4A_1^3A_3-4A_2^3A_0)(-27B_0^2B_3^2+18B_0B_3B_2B_1\\
&+B_1^2B_2^2-4B_1^3B_3-4B_2^3B_0)\\
\end{split}
\end{equation}
\end{small}

Note that $R$ and  $ H$ change by a sign if  the cubics are switched (i.e.,   they  are   not  $\nu$-invariant)
but  $I$   and   $D$  are $\nu$-invariant.

\begin{defn}
Let ${\mathcal R}_{(3,3)}$ denote  the ring of $SL_2(k) \times \Gamma_0$-invariants in
$$k\, [A_0,  \dots  ,   A_3,  B_0,  \dots  ,   B_3].$$
The  null cone  $\mathcal  N_{(3,3)}$  is the  common  zero  set  of all  homogeneous  elements of positive degree
in $\mathcal R_{(3,3)}$
\end{defn}

\begin{lem}\label{lem10}
(i).  Let $f,g \in V_3$. Then $fg=0$ or has a root of multiplicity at least 4 if and only if $D$, $R$, $ H$ and
$I$ vanish simultaneously on the pair $(f,g)$.

(ii). The null cone $\mathcal N_{(3,3)}$ is the common zero set of $D$, $R$, $I$ and $H$.

(iii).  ${\mathcal R}_{(3,3)}$ is finitely generated as a module over $k\, [D, R, H, I]$.
\end{lem}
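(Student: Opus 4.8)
The plan is to deduce (iii) from (ii) and (ii) from (i), so that the real content lies in (i). Throughout I would use the dictionary furnished by the classical meaning of the four invariants: $R(f,g)=0$ iff $f$ and $g$ share a root in $\mathbb P^1$; $D=D_fD_g=0$ iff $f$ or $g$ has a repeated root; $I=I_2(fg)$ by construction; and the product formula $\mathrm{disc}(fg)=\mathrm{disc}(f)\,\mathrm{disc}(g)\,\mathrm{Res}(f,g)^2$, which gives $I_{10}(fg)=c\,D\,R^2$ with $c\in k^{\ast}$. For the forward implication of (i) I would argue thus. If $fg=0$, say $f=0$, then every monomial of $H,I,R$ contains some $A_i$ and $D$ has $D_f$ as a factor, so all four vanish. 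If instead $fg$ has a root $p$ of multiplicity $\ge 4$, then, since each cubic contributes at most $3$ at $p$, both $f$ and $g$ vanish at $p$ (whence $R=0$) and at least one of them to order $\ge 2$ (whence $D=0$); Lemma \ref{lem5} applied to the sextic $fg$ gives $I=I_2(fg)=0$; and after moving $p$ to $(1,0)$ by $SL_2(k)$ one has $A_0=B_0=0$, so $H=-A_1B_2+A_2B_1$, whose surviving terms would require $m_f+m_g\le 3$, contrary to $m_f+m_g\ge4$. Hence $H=0$ as well.

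For the converse of (i) I would suppose $D=R=H=I=0$ with $fg\neq 0$ and, aiming at a contradiction, assume $fg$ has no root of multiplicity $\ge 4$. Since $D=0$ forces a repeated root, the maximal multiplicity of $fg$ is $2$ or $3$; the value $3$ is impossible because Lemma \ref{lem4} would then give $I=I_2(fg)=3r^2$ with $r\neq 0$, against $I=0$. So every repeated root of $fg$ is double; in particular the common root $p$ (from $R=0$) is simple in each factor, and, after applying $\nu$ if necessary, the repeated root from $D=0$ is a double root $q\neq p$ of $f$ with $g(q)\neq 0$. Normalizing $p=(1,0)$ and $q=(0,1)$ by $SL_2(k)$ turns this into $f=X^2Y$ and $g=Y(X-\sigma Y)(X-\tau Y)$ with the remaining roots $\sigma,\tau\neq 0$, and a direct computation gives
\begin{equation*}
H=\sigma+\tau,\qquad I=I_2(fg)=8\sigma\tau-3(\sigma+\tau)^2 .
\end{equation*}
Then $H=0$ forces $\tau=-\sigma$, whereupon $I=-8\sigma^2=0$ forces $\sigma=0$, contradicting $\sigma\neq0$; this completes (i).

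Part (ii) I would prove along the lines of Lemma \ref{lem6}. The inclusion $\mathcal N_{(3,3)}\subseteq V(D,R,H,I)$ is trivial, since the four are homogeneous invariants of positive degree. For the reverse, given $(f,g)\in V(D,R,H,I)$, part (i) places a root of multiplicity $\ge 4$ of $fg$ at $(1,0)$, so $f$ and $g$ vanish there to orders $m_f,m_g$ with $m_f+m_g\ge4$; combining the torus $\mathrm{diag}(t^{-1},t)\in SL_2(k)$ with the scaling $c=t^{b}\in\Gamma_0$ sends $A_i\mapsto t^{\,2i-3+b}A_i$ and $B_j\mapsto t^{\,2j-3-b}B_j$, and the admissible interval $3-2m_f<b<2m_g-3$ is nonempty precisely because $m_f+m_g\ge4$. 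For such $b$ every surviving exponent is positive, so this one-parameter subgroup $\mu(t)$ contracts $(f,g)$ to $0$ as $t\to0$ (the case $fg=0$ being handled by the $\Gamma_0$-factor alone). Consequently any homogeneous $\phi\in\mathcal R_{(3,3)}$ of positive degree is constant along the orbit, $\phi(f,g)=\phi(\mu(t)\cdot(f,g))$, while the right side tends to $\phi(0)=0$; hence $\phi(f,g)=0$ and $(f,g)\in\mathcal N_{(3,3)}$. Finally (iii) follows from the analogue of Theorem \ref{thm2} for the reductive group $SL_2(k)\times\Gamma_0$, whose hypothesis is exactly the equality of common zero set and null cone established in (ii).

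I expect the main obstacle to be the converse of (i). The conditions $H=0$ and $I=0$ carry no direct root-theoretic meaning, and since $I_4(fg)$ and $I_6(fg)$ need not be polynomials in $D,R,H,I$, I cannot simply feed the vanishing into Lemma \ref{lem5}. The key is therefore to use $I=I_2(fg)$ together with Lemmas \ref{lem4} and \ref{lem5} to force the maximal multiplicity down to $2$, collapsing the whole locus onto the single two-parameter family above, where the explicit values of $H$ and $I$ finish the argument.
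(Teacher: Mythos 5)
Your proof is correct, and its skeleton is the paper's: part (i) carries the content, (ii) follows from (i) by a contraction argument, and (iii) is quoted from the analogue of Theorem \ref{thm2}, exactly as in the paper. The interesting divergence is in the converse half of (i). The paper argues directly: using $D_f=0$ it normalizes $f$ to $X^3$ or $X^2Y$, uses $R=0$ to force $X$ or $Y$ to divide $g$, and then the explicit values $H=-c$, resp.\ $H=-b$ and $I=16ac$, kill enough coefficients of $g$ that $X^4$ or $Y^4$ visibly divides $fg$. You instead argue by contradiction: $I=I_2(fg)=0$ together with Lemma \ref{lem4} caps the maximal multiplicity of $fg$ at $2$, which collapses everything onto the single family $f=X^2Y$, $g=Y(X-\sigma Y)(X-\tau Y)$ with $\sigma\tau\neq 0$, where the computed $H$ and $I$ give the contradiction. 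Both work; the paper's is shorter and self-contained, while yours makes the role of $I$ more conceptual by routing it through the sextic lemmas. In part (ii) your treatment is actually more careful than the paper's one-line ``rest is as in Lemma \ref{lem5}'': the pure torus $\mathrm{diag}(t^{-1},t)$ of Lemma \ref{lem6} does \emph{not} contract $(f,g)$ when the multiplicities split as $(m_f,m_g)=(1,3)$, since the coefficient $A_1$ then has negative weight; your twist by $c=t^{b}\in\Gamma_0$ with $3-2m_f<b<2m_g-3$, an interval containing an integer precisely because $m_f+m_g\ge 4$, is exactly the needed repair, and is a genuine gain in rigor over the paper's sketch. Two cosmetic points: over a general algebraically closed field the limit language should be replaced by the paper's own polynomial argument ($\phi(\mu(t)\cdot(f,g))$ is a polynomial in $t$ with no constant term and is constant in $t$ by invariance, hence identically zero); and Lemmas \ref{lem4} and \ref{lem5} are stated under $char(k)\neq 2,3,5$ while Section 4 allows characteristic $5$ --- the directions you invoke are just evaluations of the explicit formulas and remain valid, and the paper commits the same imprecision by citing Lemma \ref{lem5} here.
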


\begin{proof}
(i) If $f g$ has a root of multiplicity four then $f$ and $g$ must have a common root. Therefore $R = 0$. Moreover
this common root must be of multiplicity at least 2 in either $f$ or $g$ and hence $D = 0$. Also from Lemma
\ref{lem5} we know $I = 0$. One also checks that $H = 0$. Conversely, let $D = R = H = I = 0$. Recall $D = {D_f}
D_g $ where $D_f$ and $D_g$ are discriminants of $f$ and $g$ respectively. We may assume $f \neq 0 \neq g$. Say
$D_f = 0$. Then we may assume $f = X^3$  or $X^2Y$.

\smallskip

{\it Case(1):} $f = X^3$. Since $R = 0$, we get $X$ divides $g$ and hence $X^4$ divides $fg$.

\smallskip

{\it Case(2):} $f = X^2 Y$. Since $R = 0$, either $X$ or $Y$ divides $g$. Thus $g = X(aX^2 + bXY + cY^2)$ or $g =
Y(aX^2 + bXY + cY^2)$.

(2a): Let $ g = X(aX^2 + bXY + cY^2)$. Then $H = -c$. Therefore $H = 0$ implies $c = 0 $ and hence $X^4$ divides
$fg$.

(2b): Let $g = Y(aX^2 + bXY + cY^2)$. Then $H = -b$ and $I = 16ac$. Therefore $H = I = 0$ implies $a = b = 0$ or $
b = c = 0$ and hence $Y^4$ divides $fg$ or $X^4$ divides $fg$.

\smallskip

(ii) : Suppose $I \in \mathcal R_{(3,3)}$ is homogeneous of degree $s > 0$. We know $I(f,g) = I(cf, c^{-1}g)$ for
every $c \in k^{*}$. Then  $I(f,0) = I(cf,0)$  for every $c \in k^{*}$, so  $I(f,0)$ viewed as a  polynomial  in
$A_0$,   \dots  ,    $A_3$ is  constant and  hence is  0 (by  taking $f = 0$). Rest is as in Lemma \ref{lem5}.

\smallskip

(iii) : The claim follows because the analogue of Theorem \ref{thm2} holds here (with the same proof).

\end{proof}


\begin{rem}\label{rem5}
(a) Since $V_3 \bigoplus V_3$ / $SL_2(k) \times \Gamma _0$ is a 4 dimensional variety, Lemma \ref{lem10} (iii)
implies $D$, $R$, $H$ and $I$ are algebraically independent over $k$.

(b) The  quotient of two homogeneous elements  in $\mathcal R_{(3,3)}$ of  the same  degree is  $GL_2(k) \times
\Gamma$-invariant if and only if   it is $\nu$-invariant.  In particular  the following  elements  are $GL_2(k)
\times \Gamma$-invariants.
$$ R_1 := \frac{{H}^2}{I}, \quad R_2 := \frac{{H}^3}{R}, \quad R_3 := \frac{{H}^4}{D} $$

(c)  Assertion  (a)  implies  $R_1$, $R_2$,  $R_3$  are  algebraically independent over $k$. The proof of this
fact is similar to   Remark \ref{rem2} (c) (for $\frac{1}{R_1}$, $\frac{1}{R_2}$, $\frac{1}{R_3}$).
\end{rem}
Further   define the following
\begin{equation}
\begin{split}
V_1 &  := \frac{I H}{R} = \frac{R_2}{R_1}, \quad V_2:= \frac{H ^{3}}{R} = R_2, \quad V_3:= \frac{{H}^4}{D} = R_3,\\
V_4 & := \frac{I ^{2}}{D} = \frac{R_3}{R_1 ^{2}}, \quad V_5:= \frac{I ^{3}}{R^2} = \frac{R_2 ^{2}}{R_1 ^{3}},
  \quad  V_6:= \frac{I H ^{2}}{D} = \frac{R_3}{R_1}\\
\end{split}
\end{equation}

\begin{rem}\label{rem6}
The definitions of $V_1$, $V_2$, $V_3$ imply $k ( R_1, R_2, R_3) = k ( V_1, V_2, V_3)$. Therefore $V_1$, $V_2$ and
$V_3$ are also algebraically independent over $k$.
\end{rem}

\begin{lem}\label{lem11}
Let $a$, $b$, $c$ and $d$ be non-negative integers such that $a+2b = 3c+4d$. Then ${\bf m} = \frac{H ^{a} I
^{b}}{R^c D^d} \in k\, [V_1, V_2,  \dots  ,  V_6]$.
\end{lem}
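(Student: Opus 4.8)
The plan is to mirror the proof of Lemma \ref{lem7}, since $\mathbf{m}$ is its exact analogue, with $(H,I,R,D)$ of degrees $(2,4,6,8)$ playing the roles that $(I_2,I_4,I_6,I_{10})$ played before. First I would record that the hypothesis $a+2b=3c+4d$ is precisely the statement that the numerator $H^aI^b$ and the denominator $R^cD^d$ have equal degree in $k\,[A_0,\dots,A_3,B_0,\dots,B_3]$ (indeed $\deg(H^aI^b)=2a+4b$ and $\deg(R^cD^d)=6c+8d$), so that $\mathbf{m}$ is a homogeneous quotient of degree $0$; this is what makes it eligible to be an invariant built from the $V_i$.

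The substance of the argument is purely combinatorial. Encoding $\mathbf{m}=H^aI^bR^{-c}D^{-d}$ by its exponent vector $(a,b,c,d)$, I would show that the exponent vectors of $V_1,\dots,V_6$,
\[
(1,1,1,0),\ (3,0,1,0),\ (4,0,0,1),\ (0,2,0,1),\ (0,3,2,0),\ (2,1,0,1),
\]
generate the additive monoid $M=\{(a,b,c,d)\in\Z_{\geq 0}^4 : a+2b=3c+4d\}$. Granting this, $\mathbf{m}$ is literally a product of the $V_i$ and so lies in $k\,[V_1,\dots,V_6]$.

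To prove the monoid is generated I would run a descent on the quantity $c+d$. The key point is that whenever $c+d>0$ at least one of the six generators can be split off while leaving all four remaining exponents non-negative, and each such step strictly decreases $c+d$ (the six generators lower $c+d$ by $1,1,1,1,2,1$ respectively); iterating therefore drives $(a,b,c,d)$ to $(0,0,0,0)$, i.e. $\mathbf{m}=1$, with no residual cases to inspect. To exhibit an available generator I would split into the two regimes dictated by the denominators. If $d\geq 1$, the weight identity forces enough numerator to be present: when $b\geq 2$ factor off $V_4$; when $b=1$ the identity gives $a=3c+4d-2\geq 2$, so factor off $V_6$; when $b=0$ it gives $a=3c+4d\geq 4$, so factor off $V_3$. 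If instead $d=0$ and $c\geq 1$, then $a+2b=3c$: if $a,b\geq 1$ factor off $V_1$; if $b=0$ then $a=3c\geq 3$ and factor off $V_2$; if $a=0$ then $2b=3c$ forces $c\geq 2$ and $b\geq 3$, so factor off $V_5$.

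The routine part is checking that each splitting preserves the defining relation $a+2b=3c+4d$ (it does, since every generator satisfies it) and leaves the numerator exponents non-negative. The one place demanding care, and what I expect to be the main obstacle, is the exhaustiveness of the case split: one must verify that the listed cases genuinely cover every $(a,b,c,d)\in M$ with $c+d>0$. This is exactly where the identity $a+2b=3c+4d$ is indispensable, since it rules out the apparent gaps (for instance $b=1$ with $a<2$, or $d=0,\,a=0$ with $c=1$) by forcing the needed numerator exponents to be large enough.
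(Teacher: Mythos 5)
Your proof is correct and is essentially the paper's own argument: the paper also treats $\mathbf{m}$ as an element of the exponent monoid and extracts generators, using powers of $V_2, V_3$ to reduce to $a \le 3$ and powers of $V_4, V_5$ to reduce to $b \le 1$, after which the relation $a+2b=3c+4d$ leaves only finitely many residual cases ($1$, $V_1$, $V_2$, $V_6$) settled by inspection. Your descent on $c+d$ is the same extraction idea with a different induction quantity, and is worked out in more detail than the paper's two-line sketch.
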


\begin{proof}
Extracting powers of $V_2$ and $V_3$ we may assume $a \leq 3$ and extracting powers of $V_4$ and $V_5$ we may
assume $b \leq 1$. This gives six possibilities for the pair $(a,b)$ and this leads to $V_1,  \dots  , V_6$.
\end{proof}

\begin{lem}\label{lem12}
The ring $ \mathcal S = k\, [V_1, V_2, V_3, V_4, V_5, V_6]$ is normal.
\end{lem}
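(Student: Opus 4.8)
The plan is to follow the strategy of Lemma \ref{lem8} closely, replacing the single denominator $I_{10}$ used there by the two denominators $R$ and $D$ occurring here, and replacing Lemma \ref{lem7} by Lemma \ref{lem11}. First I would record the two structural facts on which everything rests. By Remark \ref{rem5}(a) the invariants $H$, $I$, $R$, $D$ are algebraically independent over $k$, so $\mathcal{P} := k\,[H, I, R^{-1}, D^{-1}]$ is a genuine polynomial ring in the four algebraically independent elements $H$, $I$, $R^{-1}$, $D^{-1}$; in particular $\mathcal{P}$ is integrally closed in its field of fractions $k(H, I, R, D)$. The second fact is the inclusion $\mathcal{S} \subseteq \mathcal{P}$: inspecting the definitions, each generator $V_i$ is a monomial in $H$ and $I$ with non-negative exponents times a non-positive power of $R$ and of $D$, hence lies in $\mathcal{P}$. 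Consequently the field of fractions of $\mathcal{S}$ is contained in $k(H, I, R, D)$.

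Next I take an element $J$ in the field of fractions of $\mathcal{S}$ that is integral over $\mathcal{S}$, satisfying a monic equation $J^n + p_{n-1}J^{n-1} + \dots + p_0 = 0$ with $p_i \in \mathcal{S}$. Since $\mathcal{S} \subseteq \mathcal{P}$, the same equation exhibits $J$ as integral over $\mathcal{P}$, and $J$ lies in the fraction field of $\mathcal{P}$; as $\mathcal{P}$ is normal this forces $J \in \mathcal{P}$, i.e. $J$ is a polynomial in $H$, $I$, $R^{-1}$, $D^{-1}$. This is the step that does the real work, and it is exactly where the present lemma diverges from Lemma \ref{lem8}: with two denominators one cannot simply clear by a power of a single invariant and then read off an exponent bound from the degree, since a monomial $H^a I^b R^c D^d$ of the correct total degree need not have $c$ and $d$ both bounded by the cleared exponent. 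Passing instead to the polynomial ring $\mathcal{P}$ in the inverted variables rules out, at one stroke, any positive power of $R$ or $D$ appearing in $J$. I expect this to be the only genuine obstacle; the remainder is a degree computation.

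Finally I would exploit that $\mathcal{S}$, being generated by the degree-$0$ elements $V_1, \dots, V_6$, is concentrated in degree $0$, so its field of fractions, and hence $J$, consists of elements homogeneous of degree $0$ in the grading of $k\,[A_0,\dots,A_3,B_0,\dots,B_3]$. Writing $J = \sum \mu_{a,b,c,d}\, H^a I^b R^{-c} D^{-d}$ with $a,b,c,d \ge 0$ and recalling $\deg H = 2$, $\deg I = 4$, $\deg R = 6$, $\deg D = 8$, homogeneity of degree $0$ means every monomial with $\mu_{a,b,c,d} \neq 0$ satisfies $2a + 4b = 6c + 8d$, that is $a + 2b = 3c + 4d$. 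Each such monomial is precisely an element ${\bf m} = \dfrac{H^a I^b}{R^c D^d}$ as in Lemma \ref{lem11}, hence lies in $\mathcal{S} = k\,[V_1, \dots, V_6]$. Therefore $J$, being a $k$-linear combination of such monomials, lies in $\mathcal{S}$, which proves that $\mathcal{S}$ is integrally closed in its field of fractions, i.e. normal.
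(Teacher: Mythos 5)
Your proof is correct, and although it shares the paper's overall skeleton (reduce to normality of a four-variable polynomial ring built from $H$, $I$, $R$, $D$, then descend to $\mathcal S$ via Lemma \ref{lem11} and degree-zero homogeneity), it diverges at the decisive step, and the divergence is substantive. The paper clears denominators: it multiplies the integral equation by $(RD)^{en}$ so that $(RD)^e U$ is integral over $k[H,I,R,D]$, concludes $(RD)^e U \in k[H,I,R,D]$ by normality of that polynomial ring, and then simply asserts that Lemma \ref{lem11} finishes the proof. With two denominators that last assertion is not automatic: writing $(RD)^e U$ as a sum of monomials $H^a I^b R^{c'} D^{d'}$ of degree $14e$, the degree count does not force $c' \le e$ and $d' \le e$ (for instance $H R^2$ has degree $14$, yielding for $U$ the degree-zero monomial $HR/D$, which is not of the shape $H^a I^b / (R^c D^d)$ with $c,d \ge 0$ covered by Lemma \ref{lem11}). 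In Lemma \ref{lem8}, with the single denominator $I_{10}$, the analogous bound $d' \le e$ \emph{does} follow from the degree count, which is why the paper's template works verbatim there but leaves a gap here. Your move --- observing $\mathcal S \subseteq \mathcal P := k[H,I,R^{-1},D^{-1}]$, a polynomial ring since $H,I,R,D$ are algebraically independent by Remark \ref{rem5}(a), so that any element of the fraction field of $\mathcal S$ integral over $\mathcal S$ already lies in $\mathcal P$ --- rules out positive powers of $R$ and $D$ at one stroke, after which the degree-zero condition $a+2b=3c+4d$ on the surviving monomials hands you Lemma \ref{lem11} exactly as stated. So your route not only works; it supplies the justification that the paper's final sentence leaves implicit, and you correctly identified that exponent-bound issue as the one genuine obstacle.
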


\begin{proof}
Suppose an element $U$ in the field of fractions of $\mathcal S$ is integral over $\mathcal S$. Then we have an
equation.
\begin{equation}
U^n + p_{n-1}(V_1,  \dots  ,   V_6)U^{n-1} +  \dots  + p_0( V_1,  \dots  ,   V_6) = 0
\end{equation}
where $p_i$ are polynomials in 6 variables over $k$. Let $e$ be a positive integer such that $(R D)^{e}p_i \in k\,
[H,I, R, D]$. Then multiplying the above equation  by $(RD)^{en}$, we see that $(R D)^e U$ is integral over $k\,
[H, I, R, D]$. By  Remark  \ref{rem5}, (a) we know that $k\, [H, I, R, D]$ is a polynomial ring. Also the field of
fractions of $\mathcal S$ is contained in $k(H, I, R, D)$. Therefore $(RD)^e U \in k\, [H, I, R, D]$. Lemma
\ref{lem11} implies $U \in \mathcal S$.
\end{proof}

\subsection{The Field of Invariants of $GL_2(k) \times \Gamma$ in $k(A_0, \dots , A_3, B_0, \dots , B_3)$.}

\begin{thm}\label{thm5}
The  field  $L$  of  $GL_2(k)  \times  \Gamma$-invariants  in  $k(A_0,  \dots  , A_3, B_0,  \dots , B_3)$ is a
rational function field ,   namely $ L = k\, ( R_1, R_2, R_3) = k\, (V_1, V_2, V_3)$.
\end{thm}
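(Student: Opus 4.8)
The plan is to mirror the strategy already used for the sextic case in Theorem \ref{thm3}, since the algebraic situation is structurally identical. We have the chain $k(R_1,R_2,R_3) \subseteq L$ of fields, and by Remark \ref{rem6} it suffices to prove $L = k(R_1,R_2,R_3)$; equivalently $L = k(V_1,V_2,V_3)$. The idea is a Bezout degree count. First I would pass to a slice of $V_3 \bigoplus V_3$ analogous to $\cB$: using the $GL_2(k) \times \Gamma$ action, a generic pair $(f,g)$ with $D,R \neq 0$ can be normalized so that the six roots (three of $f$, three of $g$) are placed in a standard configuration, fixing three of them at $0,1,\infty$ by $GL_2(k)$ and using $\Gamma_0$ to scale. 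This produces a concrete rational coordinate model, say $k(W_1,W_2,W_3)$, on the normalized slice, playing the role that $k(B_1,B_2,B_3)$ played before.

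Next I would evaluate $H$, $I$, $R$, $D$ on this slice to obtain homogeneous polynomials $\tilde H, \tilde I, \tilde R, \tilde D$ of degrees $2,4,6,8$ in the slice coordinates, so that $R_1,R_2,R_3$ embed as explicit rational functions (the analogues of the $T_i$ formulas). Since $R_1,R_2,R_3$ are algebraically independent over $k$ (Remark \ref{rem5}(c)) and the slice has transcendence degree $3$, the extension $k(W_1,W_2,W_3)/k(R_1,R_2,R_3)$ is finite algebraic. The crucial step is then to bound its degree $N$: letting $\Omega$ be an algebraic closure of $k(R_1,R_2,R_3)$, the number of embeddings $\alpha$ of the slice field into $\Omega$ fixing $k(R_1,R_2,R_3)$ equals the number of projective solutions $(\,\alpha(W_0):\alpha(W_1):\alpha(W_2):\alpha(W_3)\,)$ of the homogeneous system
\begin{equation*}
R_1\,\tilde H^2 - \tilde I = 0, \quad R_2\,\tilde H^3 - \tilde R = 0, \quad R_3\,\tilde H^4 - \tilde D = 0,
\end{equation*}
in which the three forms have degrees $4$, $6$, $8$. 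By Bezout's theorem the total number of projective solutions is at most $4 \cdot 6 \cdot 8 = 192$, and one must subtract the spurious solutions coming from the null cone (the points where $fg$ has a root of multiplicity $\geq 4$, by Lemma \ref{lem10}(i)), which lie off the generic slice.

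The main obstacle, as in the sextic argument, is the matching of this Bezout bound against a lower bound forcing $N$ to equal the intrinsic Galois degree. I would identify a finite Galois group acting on $k(W_1,W_2,W_3)$ whose fixed field is exactly $L$ — here the combined action of the residual symmetries: the symmetric groups permuting the roots of $f$ and of $g$, together with the involution $\nu$ swapping $f$ and $g$. Computing the order of this group gives a number $M$ with $M \mid N$, and $N$ is a multiple of $[k(W_1,W_2,W_3):L]$. If the Bezout count pins $N$ down to exactly this order after discarding the null-cone solutions, then $L = k(R_1,R_2,R_3)$ follows, and Remark \ref{rem6} upgrades this to $L = k(V_1,V_2,V_3)$. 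The delicate points are the correct bookkeeping of the null-cone solutions to be excised and verifying that the normalized slice genuinely realizes every $GL_2(k)\times\Gamma$-orbit of generic pairs exactly once up to the residual finite symmetry, so that the degree identity is sharp rather than merely an inequality.
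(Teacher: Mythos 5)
Your overall strategy is the same as the paper's (normalize to the slice $\bar\cB$ where $f = XY(X-Y)$, embed $L$ into the slice field $k(B_1,B_2,B_3)$ below the fixed field $M$ of $S_3 \wr \Z_2$, then pin down the degree by Bezout), but your degree bookkeeping contains an error that breaks the argument at its decisive step. The forms $H, I, R, D$ have degrees $2,4,6,8$ as polynomials in $A_0,\dots,A_3,B_0,\dots,B_3$ jointly, being of bidegree $(1,1)$, $(2,2)$, $(3,3)$, $(4,4)$. On the slice, however, the first cubic is \emph{frozen}: $f = XY(X-Y)$ has constant coefficients $(0,1,-1,0)$, so the restricted polynomials $\w H, \w I, \w R, \w D$ are homogeneous of degrees $1,2,3,4$ in $B_0,\dots,B_3$ — half of what you claim. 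Consequently the homogeneous system
\begin{equation*}
\w H^2 - R_1 \w I = 0, \qquad \w H^3 - R_2 \w R = 0, \qquad \w H^4 - R_3 \w D = 0
\end{equation*}
has degrees $2,3,4$, and Bezout gives at most $2 \cdot 3 \cdot 4 = 24$ projective solutions. Since the embeddings contribute $m$ distinct solutions, the single extra null-cone solution $(0,0,0,1)$ (i.e.\ $g = Y^3$, where all four restricted forms vanish) gives $m + 1 \leq 24$, and $12 \mid m$ forces $m = 12$, which is exactly what closes the proof.

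With your degrees $4,6,8$ the bound is $192$, and knowing only that $N$ is a multiple of $12$ you cannot conclude $N = 12$: it could a priori be any multiple of $12$ up to $191$. Your proposed repair — excising "spurious solutions coming from the null cone" — cannot bridge this gap, because there is essentially only the one degenerate solution $(0,0,0,1)$ to discard, not the $168$ or more that your bound would require; nothing in the proposal produces such a count. A secondary confusion: the group $S_3 \wr \Z_2$ (order $72$) acts on $k(C_1,C_2,C_3)$, not on the coefficient field $k(B_1,B_2,B_3)$, and the relevant index is $[k(B_1,B_2,B_3):M] = 12$ (the slice field already sits above $M$, being the fixed field of the $S_3$ permuting the roots of $g$); also one only knows $L \subseteq M$ a priori — the equality $L = M$ is a consequence of the Bezout count, not an input to it. The essential missing insight is precisely that restriction to the slice halves the degrees, which is what makes the Bezout bound sharp enough for the divisibility argument to bite.
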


By   Remark \ref{rem6} we only  have to show  $L = k(R_1, R_2,  R_3)$. The rest of this section occupies the
proof.

\begin{rem}\label{rem7}
If $\frac {T}{S} \in  L$ with $T$ and $S$  coprime polynomials, then it follows as in   Remark \ref{rem4} that $T
= c_g T^{g}$, $S = c_g S^{g}$ for every $g \in GL_2(k) \times \Gamma$ and  $c_g = 1$ for $ g \in SL_2(k) \times
\Gamma _0$,  the   commutator  subgroup  of  $GL_2(k)  \times \Gamma$. Thus $T$, $S \in \mathcal R_{(3,3)}$.
Further $T$ and $S$ are homogeneous of the same degree.
\end{rem}

\noindent We introduce the following notations.
\begin{equation}
\begin{split}
{\bar {\mathcal A}}  & := \{ (f,g) \in V_3 \bigoplus V_3: \, R(f,g) \cdot D(f,g) \neq 0 \}\\
{\bar {\mathcal B}}  &:= \{ (XY(X-Y), f_3): f_3 = X^3+b_1 X^2 Y+b_2 X Y^2+b_3  Y^3 \\
 & = (X-c_1 Y)(X-c_2 Y)(X-c_3 Y), \, (0, 1,\infty, c_1, c_2, c_3)\in {\mathcal C} \}
\end{split}
\end{equation}
Let $B_i$ be functions on $\bar {\cB} $ mapping $(\, XY(X-Y), \, X^3 Y + { b}_1 X^2 Y + { b}_2 XY^2 +{ b}_3 Y^3 )
\mapsto { b}_i $. Then $k (\bar {\cB}) = k \, ({ B}_1, { B}_2, { B}_3) \subset k(\mathcal C)$ . Let $M$ denote the
fixed field of the action of $(S_3 \times S_3) \rtimes \Z _2 = S_3 \wr \Z _2 < S_6$ on $k\, ( C_1, C_2, C_3)$.
Here $S_3 \wr \Z_2$ denotes the wreath product.

\begin{figure}[ht!]
$$
\xymatrix{ &&  k(C_1, C_2, C_3)\ar@{-}[d]^{6} &&\\&&  k(B_1, B_2, B_3) \ar@{-}[d]^{12}&& \\ &&M}$$
\end{figure}

\begin{lem}\label{lem13}
The inclusion $\bar \cB \subset V_3 \bigoplus V_3$ yields an embedding
$$L \subseteq M \subset k(B_1, B_2, B_3).$$
\end{lem}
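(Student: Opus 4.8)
The plan is to follow the proof of Lemma \ref{lem9} step by step, substituting $\bar{\cA}, \bar{\cB}, L, M$ and the wreath product $S_3 \wr \Z_2$ for $\cA, \cB, K, F$ and $S_6$. The first ingredient is a normal-form statement: every element of $\bar{\cA}$ is $GL_2(k) \times \Gamma$-conjugate to an element of $\bar{\cB}$. Indeed, for $(f,g) \in \bar{\cA}$ the condition $D(f,g) \neq 0$ forces each cubic to have three distinct roots and $R(f,g) \neq 0$ forces $f$ and $g$ to have no root in common, so the six roots are distinct points of $\mathbb P^1$. Since $PGL_2(k)$ is sharply $3$-transitive, I would pick $h \in GL_2(k)$ carrying the three roots of $f$ to $\{0,1,\infty\}$, so that $h$ sends $f$ to a scalar multiple of $XY(X-Y)$; the $\Gamma_0$-action $(f,g) \mapsto (cf, c^{-1}g)$ then rescales this to $XY(X-Y)$ exactly, placing the pair in $\bar{\cB}$. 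The involution $\nu$ provides the freedom to normalize by the roots of $g$ in place of those of $f$.

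Next I would verify that restriction to $\bar{\cB}$ gives an embedding $L \hookrightarrow k(\bar{\cB})$. Writing an element of $L$ as $T/S$ with $T, S$ coprime, Remark \ref{rem7} says $T, S \in \mathcal R_{(3,3)}$ are homogeneous of equal degree with $S = c_g S^g$ for all $g \in GL_2(k) \times \Gamma$. A semi-invariant of this kind vanishes on an entire orbit as soon as it vanishes at one point, so if $S$ vanished on $\bar{\cB}$ it would vanish on every $GL_2(k)\times\Gamma$-conjugate of a point of $\bar{\cB}$, hence on all of $\bar{\cA} = \{RD \neq 0\}$ by the first step. As $\bar{\cA}$ is open and dense in $V_3 \oplus V_3 = k^8$, this forces $S \equiv 0$, a contradiction. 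Thus the restriction map is well defined, hence an embedding of fields, and $L \subseteq k(\bar{\cB}) = k(B_1,B_2,B_3) \subset k(\mathcal C) = k(C_1,C_2,C_3)$.

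Finally I would show the image lands in $M$, the fixed field of $S_3 \wr \Z_2$. Take $I \in L$ with image $\bar I \in k(C_1,C_2,C_3)$ and set $p = (0,1,\infty,c_1,c_2,c_3)$; it suffices to check that $\bar I$ is fixed by the three kinds of generators of $S_3 \wr \Z_2 < S_6$. Permutations of the block $\{4,5,6\}$ act trivially, because $\bar I$ already lies in $k(B_1,B_2,B_3)$ and the $B_i$ are symmetric in $c_1,c_2,c_3$. A permutation of the block $\{1,2,3\}$ is realized by an anharmonic Möbius map $h$ fixing $\{0,1,\infty\}$ as a set; after the $\Gamma_0$-rescaling that returns $h\cdot(XY(X-Y),f_3)$ to $\bar{\cB}$, the $GL_2(k)\times\Gamma_0$-invariance of $I$ yields, exactly as in Lemma \ref{lem9},
\begin{equation*}
\bar I(p^{\tau}) = I\bigl( h \cdot (XY(X-Y),\, f_3) \bigr) = I\bigl( XY(X-Y),\, f_3 \bigr) = \bar I(p).
\end{equation*}
(One checks that $c_i \mapsto h(c_i)$ reproduces the formulas (a),(b) recorded before Lemma \ref{lem9}.) The generating $\Z_2$ that swaps the two blocks is realized by $\nu$ followed by the Möbius renormalization sending the roots of $g$ to $\{0,1,\infty\}$; since $I$ is $\nu$-invariant, $\bar I$ is fixed here too. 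Hence $\bar I \in M$; and since $M$ is fixed in particular by the copy of $S_3$ permuting $\{4,5,6\}$, whose fixed field is $k(B_1,B_2,B_3)$, we obtain the asserted chain $L \subseteq M \subset k(B_1,B_2,B_3)$.

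I expect the main obstacle to be the bookkeeping of the third step: one must confirm that the geometric normalizations (the anharmonic Möbius maps for $\{1,2,3\}$ and the $\nu$-swap of the two blocks) realize precisely the block-preserving wreath subgroup $S_3 \wr \Z_2$ of $S_6$ and nothing larger. The genuinely new feature relative to Lemma \ref{lem9} is that $V_3 \oplus V_3$ records the six roots as two labeled triples rather than an unordered sextuple, so the relevant symmetry group is the stabilizer of this block partition together with the $\nu$-swap, giving a wreath product in place of the full $S_6$. Matching this partition against the explicit action formulas is routine but must be carried out with care.
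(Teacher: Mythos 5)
Your proposal follows the paper's proof of Lemma \ref{lem13} essentially step by step: the normal-form statement (every point of $\bar\cA$ is $GL_2(k)\times\Gamma$-conjugate to a point of $\bar\cB$), the well-definedness of the restriction map via the coprimality and semi-invariance argument of Remark \ref{rem7} together with density of $\bar\cA$, and the verification that the image of $L$ is fixed by $S_3 \wr \Z_2$ using the block structure together with $GL_2(k)$-invariance and $\nu$-invariance. Your organization of the last step by generators of the wreath product is only a presentational variant of the paper's argument for a general $\tau$, which rests on the single observation that $\{\tau(1),\tau(2),\tau(3)\}$ equals $\{1,2,3\}$ or $\{4,5,6\}$ and that $I$ is symmetric in the two cubics.

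One inaccuracy in your first step should be repaired: the $\Gamma_0$-action alone only permits the coupled rescaling $(f,g)\mapsto(cf,c^{-1}g)$, so after the M\"obius normalization you can make the first cubic exactly $XY(X-Y)$, but the second cubic then becomes $c^{-1}g$, which need not be monic; the pair is therefore not yet in $\bar\cB$ as you claim. To rescale the two cubics independently you must also use the scalar matrices of $GL_2(k)$, which multiply each cubic by the cube of the scalar; combining these with $\Gamma_0$ yields, for any $c \in k^{*}$, elements $\gamma_1, \gamma_2 \in GL_2(k)\times\Gamma$ with $(f,g)^{\gamma_1}=(cf,g)$ and $(f,g)^{\gamma_2}=(f,cg)$, which is exactly the observation the paper makes at this point. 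This is a one-line repair and does not affect the rest of your argument.
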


\begin{proof}
Note that any $(f,g) \in V_3 \bigoplus V_3 $ with $R(f,g) \cdot D(f,g) \neq 0$ is $GL_2(k) \times
\Gamma$-conjugate to an element in $\bar \cB$. Indeed, using $SL_2(k)$ we can move the roots of $f$ to $(1,0)$,
$(0,1)$ and $(1,1)$. Then $f$ becomes a scalar multiple of $XY (X-Y)$. Further we can replace $f$ and $g$ by
scalar multiples because given $c \in k^{*}$, there are elements $\gamma_1$, $\gamma_2 \in GL_2(k) \times \Gamma$
such that $(f,g)^{\gamma_1} = (c f, g)$ and $(f,g)^{\gamma_2} = (f, \, c g)$.

If $\frac{T}{S} \in L$ with $T$ and $S$ coprime polynomials, then $S$ does not vanish on $\bar \cB$ by the
previous paragraph. Therefore the restriction map $ L \to k(\bar \cB) \subset k(\mathcal C)$ is well defined. Let
$I \in L$ and $\bar I$ its image in $k(\mathcal C)$. Denote $p = (0, 1, \infty, c_1, c_2, c_3) \in \mathcal C$ by
$(p_1, p_2,  \dots  ,  p_6)$. For $\tau \in S_3 \wr \Z _2 < S_6$, we have
\begin{small}
\begin{equation*}
\begin{split}
 \bar I (p^{\tau}) = & \bar I (g(p_{\tau(1)}), \dots  ,  g(p_{\tau(6)})) = I ( (X-g(p_{\tau(1)})Y)
(X-g(p_{\tau(2)})Y) (X-g(p_{\tau(3)})Y),\\
 & (X-g(p_{\tau(4)})Y) (X-g(p_{\tau(5)})Y) (X-g(p_{\tau(6)})Y) ) \\
  = & I( (X-p_{\tau(1)}Y)(X-p_{\tau(2)}Y)(X-p_{\tau(3)}Y), (X-p_{\tau(4)}Y)(X-p_{\tau(5)}Y)(X-p_{\tau(6)}Y)
\end{split}
\end{equation*}
\end{small}
for some $g \in GL_2(k)$. But $\{ \tau(1), \tau(2), \tau(3) \}$ equals $\{1, 2, 3 \}$ or $\{ 4, 5, 6 \}$ and $I$
is symmetric in $f$ and $g$, it follows $\bar I(p^{\tau}) = \bar I(p)$. Thus $\bar I \in M$.

\end{proof}

The evaluation of $H$, $I$, $R$ and $D$ \ \  on $( XY(X-Y), \, {b_0}X^3 +{b_1}X^2 Y + {b_2}XY^2 + b_3 Y^3 )$ gives
the following homogeneous polynomials of degree 1, 2, 3 and 4 respectively.

\begin{small}
\begin{equation}
\begin{split}
& \w{H} \, (B_0, B_1, B_2, B_3)  = - (B_1+B_2) \\
& \w{I} \, (B_0,B_1,B_2,B_3) = 24{B_3}{B_0}+16{B_2}{B_0}-4{B_1}{B_2}+16{B_1}{B_3}-6{B_1}^2-6{B_2}^2 \\
& \w{R} \, (B_0, B_1, B_2, B_3) = {B_0}{B_3}(B_0 + B_1 + B_2 + B_3)\\
& \w{D} \, (B_0,B_1,B_2,B_3) = -4{B_0}{B_2}^3+{B_1}^2{B_2}^2+18{B_0}{B_1}{B_2}{B_3}-4{B_1}^3{B_3}-27{B_0}^2{B_3}^2\\
\end{split}
\end{equation}
\end{small}

\noindent  Thus the elements $R_1$, $R_2$ and $R_3$ of $L$ embed in $k\, (B_1, B_2, B_3)$ as follows

\begin{small}
$$ R_1 = \frac{{\w{H} ^2} (1,B_1,B_2,B_3)}{\w{I}(1,B_1,B_2,B_3)}, \quad
R_2 = \frac{{\w{H} ^3} (1,B_1,B_2,B_3)}{\w{R}(1,B_1,B_2,B_3)}, \quad R_3 = \frac{{\w{H} ^4}
(1,B_1,B_2,B_3)}{\w{D}(1,B_1,B_2,B_3)}. $$
\end{small}

\medskip

\noindent  {\it Proof of Theorem \ref{thm5} }. We know $k(R_1, R_2, R_3) \subseteq L \subseteq M$. The theorem
follows if $M = k(R_1, R_2, R_3) $. Furthermore $$m := [ k (B_1, B_2, B_3) : k (R_1, R_2, R_3)]$$ is a multiple of
$[ k(B_1, B_2, B_3) : M] = 12$. Therefore the claim follows if $m = 12$. Let $\Lambda$ be the algebraic closure of
$k\, (R_1, R_2, R_3)$. Then $m$ is the number of embeddings $\beta$ of $k(B_1, B_2, B_3)$ into $\Lambda$ with
$\beta_ {| k(R_1, R_2, R_3)} = id $. Therefore the tuples $(1, \beta(B_1), \beta(B_2), \beta(B_3))$ constitute $m$
distinct projective solutions for the following system of homogeneous equations in $S_0$, $S_1$, $S_2$ and $S_3$.
\begin{equation}
\begin{split}
  {\w{H}}^2 (S_0,  \dots  ,   S_3) - R_1 \w{I} (S_0,  \dots  ,   S_3) = 0\\
  {\w{H}}^3 (S_0,  \dots  ,   S_3) - R_2 \w{R} (S_0,  \dots  ,   S_3) = 0\\
  {\w{H}}^4 (S_0,  \dots  ,   S_3) - R_3 \w{D} (S_0,  \dots  ,   S_3) = 0\\
\end{split}
\end{equation}
Besides these $m$ solutions there is the additional solution $(0,0,0,1)$. Therefore by $Bezout's$ theorem, $m+1
\leq 2 \cdot 3 \cdot 4 = 24$. Hence, $m$ being a multiple of 12, must equal 12. This proves $L = k(R_1, R_2,
R_3)$.

\subsection{ The Ring of $GL_2(k) \times \Gamma$-invariants
in $k\, [A_0,  \dots  ,  A_3,B_0,  \dots  ,  B_3,R^{-1},D^{-1}]$}

In this section we prove the following:

\begin{thm}\label{thm6}
$\mathcal S = k\, [V_1, V_2,  \dots  ,   V_6]$ is the ring of $GL_2(k) \times \Gamma$-invariants in

 $k\, [A_0, \dots  , A_3, B_0,  \dots  , B_3, R^{-1}, D^{-1}]$.
\end{thm}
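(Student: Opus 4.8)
The plan is to mirror the proof of Theorem~\ref{thm4} almost line by line, replacing the single discriminant $I_{10}$ by the product $RD$ and using the results already established for pairs of cubics. First I would show that $\mathcal S$ has the correct field of fractions and is normal, so that the theorem reduces to an integral-closure statement. Set $\mathcal S_0 := k\,[A_0,\dots,A_3,B_0,\dots,B_3,R^{-1},D^{-1}]^{GL_2(k)\times\Gamma}$. If $\frac{T}{S}\in\mathcal S_0$ with $T,S$ coprime, then by Remark~\ref{rem7} both $T$ and $S$ are homogeneous of the same degree and lie in $\mathcal R_{(3,3)}$. Since $S$ divides a power $(RD)^e$ in $k\,[A_0,\dots,B_3]$, I can write $SS' = (RD)^e$ with $S'\in\mathcal R_{(3,3)}$, and hence $\frac{T}{S} = \frac{TS'}{(RD)^e} = \frac{J}{(RD)^e}$ with $J\in\mathcal R_{(3,3)}$ homogeneous. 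This puts every element of $\mathcal S_0$ into a normal form entirely analogous to the sextic case.

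Next I would record the chain $\mathcal S\subseteq\mathcal S_0\subset L$. By Theorem~\ref{thm5}, $L = k(V_1,V_2,V_3)$ is the field of fractions of $\mathcal S$, and by Lemma~\ref{lem12} the ring $\mathcal S$ is normal. Consequently the only remaining task is to prove that $\mathcal S_0$ is integral over $\mathcal S$, since a normal domain equals its own integral closure in its fraction field, and any subring of $L$ that is integral over $\mathcal S$ must then be contained in $\mathcal S$. Taking $u\in\mathcal S_0$, write $u = \frac{J}{(RD)^e}$ with $J\in\mathcal R_{(3,3)}$ of degree $\deg J = 2e\cdot\deg(RD)$; more precisely, since $\deg R = 6$ and $\deg D = 8$, the bidegree bookkeeping must be set up so that $u$ is genuinely $\Gamma$-invariant. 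By Lemma~\ref{lem10}(iii), $\mathcal R_{(3,3)}$ is a finitely generated module over $k\,[D,R,H,I]$, so $J$ satisfies a monic polynomial equation
\begin{equation*}
J^n + p_{n-1}J^{n-1} + \dots + p_0 = 0, \qquad p_i\in k\,[H,I,R,D].
\end{equation*}
Dropping all terms of degree $\neq\deg(J^n)$ lets me take the $p_i$ homogeneous, and dividing through by $(RD)^{en}$ converts this into a monic relation for $u$ whose coefficients are monomials of the form $\frac{H^a I^b}{R^c D^d}$ of total degree zero.

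The step I expect to be the main obstacle is verifying that each resulting coefficient $\frac{p_i}{(RD)^{e(n-i)}}$ actually lies in $\mathcal S = k\,[V_1,\dots,V_6]$, which is exactly where Lemma~\ref{lem11} must be invoked. One has to check that homogeneity of $J$ together with the weighted-degree constraint forces the exponents in each monomial $\frac{H^a I^b}{R^c D^d}$ to satisfy the balance condition $a + 2b = 3c + 4d$ of Lemma~\ref{lem11}; this is the analogue of the condition $a+2b+3c = 5d$ used in Lemma~\ref{lem7} for the sextic case. The subtlety here, absent in the sextic setting, is the presence of \emph{two} localizing invariants $R$ and $D$ of different degrees together with the extra symmetry $\nu$, so I would be careful to confirm that the $\Gamma$-invariance of $u$ (its $\nu$-invariance in particular) is preserved under the normal-form manipulations and that the degree and weight counts close up correctly. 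Once Lemma~\ref{lem11} applies, the coefficients lie in $\mathcal S$, establishing that $\mathcal S_0$ is integral over the normal ring $\mathcal S$ and therefore $\mathcal S_0 = \mathcal S$, completing the proof.
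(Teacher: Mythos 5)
Your proposal is correct and follows essentially the same route as the paper's own proof: the normal form $u = J/(RD)^e$ via Remark~\ref{rem7}, the reduction to showing $\mathcal S_0$ is integral over the normal ring $\mathcal S$ (using Theorem~\ref{thm5} and Lemma~\ref{lem12}), and the monic equation supplied by Lemma~\ref{lem10}(iii) with homogeneous coefficients, each of which lies in $\mathcal S$ by Lemma~\ref{lem11}. Two small points: the degree of $J$ is $e\cdot\deg(RD) = 14e$, not $2e\cdot\deg(RD)$; and the $\nu$-invariance concern you flag resolves itself, since the balance condition $a+2b=3c+4d$ forces $a+c$ to be even, so every degree-zero monomial $H^aI^b/(R^cD^d)$ is automatically $\nu$-invariant.
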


\begin{proof}
Let ${\mathcal  S}_{0} =  k\, [A_0,\dots , A_3, B_0, \dots  , B_3, R^{-1}, D^{-1}]^{GL_2(k) \times \Gamma}$. If
$\frac{T}{S}  \in \mathcal S_0$ with  $T$  and  $S$  coprime   polynomials,  then  $T$  and  $S$  are homogeneous
elements  of $\mathcal R_{(3,3)}$  of the same  degree by   Remark \ref{rem7}.  Since  $S$  divides $(RD)^{e}$
(for some $e$)  in $k\, [A_0,  \dots  ,  A_3, B_0,  \dots  ,    B_3]$, we have $SS^{'} = (RD)^{e}$ with $S^{'} \in
\mathcal R_{(3,3)}$.    Thus
$$\frac{T}{S}    = \frac{T S^{'}} {(RD)^{e}}  = \frac{I}{(RD)^{e}}$$
with $I  \in \mathcal R_{(3,3)}$.

We have  $\mathcal S_{0} \subset L$. Further by Theorem \ref{thm5} we  know $L$ is the field of fractions  of
$\mathcal S$. By Lemma \ref{lem12} we know $\mathcal  S$ is normal. Since $\mathcal S \subseteq \mathcal S_{0}
\subset L$, it only remains to prove $\mathcal S_{0}$ is integral over $\mathcal S$. Let $u \in \mathcal S_0$.
Then by the previous paragraph $ u  = \frac{I}{(RD)^{e}}$  with $I  \in \mathcal R_{(3,3)}$. Thus, \ $deg(I) =
14e$. \ \ Lemma \ref{lem10}  (iii) implies
\begin{equation}
I^n + p_{n-1}I^{n-1} +  \dots +p_0 = 0
\end{equation}
where $p_i \in  k\, [H, I, R, D]$. By dropping all  terms of $degree \neq deg(I^n)$,  we   may  assume  $p_i$  are
homogeneous.   Dividing  by $(RD)^{en}$ we have
$$u^n + \frac{p_{n-1}}{(RD)^{e}}  u^{n-1} +  \dots  + \frac{p_0}{(RD)^{ne}} = 0$$
where the coefficients  lie in $\mathcal S$, by  Lemma \ref{lem11}. This proves $\mathcal S_{0}$ is integral over
$\mathcal S$.

\end{proof}

\begin{cor}\label{cor2}
Suppose $\{P, Q\}$  and $\{ P^{'}, Q^{'} \} $  are two unordered pairs of  disjoint  3-sets in  $\mathbb  P^{1}$.
They  are conjugate  under $PGL_2(k)$ if  and only if $V_1$,  \dots   ,   $V_6$ take the  same value on the two
pairs.
\end{cor}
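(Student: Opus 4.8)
The plan is to mirror the proof of Corollary~\ref{cor1} (Clebsch--Bolza--Igusa), replacing $S_6$ by $S_3\wr\Z_2$, the fixed field $F$ by $M$, the discriminant $J_{10}$ by the product $\w R\,\w D$, and Theorems~\ref{thm3},~\ref{thm4} by Theorems~\ref{thm5},~\ref{thm6}. First I would record the dictionary between the geometric statement and the invariant-theoretic one: an unordered pair of disjoint $3$-sets is the datum of a point $(f,g)\in\bar\cA$ modulo $GL_2(k)\times\Gamma$, because $\Gamma_0$ absorbs independent scalings of $f$ and $g$ (which do not affect their roots in $\mathbb P^1$) while $\nu$ effects the interchange $\{P,Q\}\mapsto\{Q,P\}$. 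Thus $PGL_2(k)$-conjugacy of the two unordered pairs is precisely $GL_2(k)\times\Gamma$-conjugacy of the corresponding pairs of cubics, and since the $3$-sets are disjoint with distinct points we have $R\neq 0$ and $D\neq 0$, so both pairs lie in $\bar\cA$. The ``only if'' direction is then immediate, as $V_1,\dots,V_6$ are $GL_2(k)\times\Gamma$-invariant.

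For the ``if'' direction I argue by contradiction. By Lemma~\ref{lem13} every point of $\bar\cA$ is $GL_2(k)\times\Gamma$-conjugate to one in $\bar\cB$, so I may assume the two pairs are $(XY(X-Y),f_3)$ and $(XY(X-Y),f_3')$ with root-tuples ${\bf a}=(a_1,a_2,a_3)$ and ${\bf b}=(b_1,b_2,b_3)$ in $\mathcal C$. If the pairs are not conjugate, then ${\bf a}$ and ${\bf b}$ lie in distinct $S_3\wr\Z_2$-orbits on $\mathcal C$, and these orbits are finite and disjoint subsets of $k^3$. By interpolation I choose a polynomial $p(C_1,C_2,C_3)$ vanishing on the whole orbit of ${\bf a}$ and taking the value $1$ on the whole orbit of ${\bf b}$, and I set $s=\frac{1}{|S_3\wr\Z_2|}\sum_{\tau\in S_3\wr\Z_2}p((C_1,C_2,C_3)^\tau)$, which lies in the fixed field $M=k(R_1,R_2,R_3)$ by Theorem~\ref{thm5} and satisfies $s({\bf a})=0$, $s({\bf b})=1$.

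The crux is to show $s\in k[B_1,B_2,B_3,\w R^{-1},\w D^{-1}]$, and I expect this denominator analysis to be the main obstacle, being the analogue of the factorization of $J_{10}$ in Corollary~\ref{cor1}. Writing $S_3\wr\Z_2$ as a disjoint union of cosets $\rho\,S_3$ of the subgroup $S_3$ permuting the letters $4,5,6$ (equivalently permuting $C_1,C_2,C_3$), each partial sum $\sum_{\sigma\in S_3}p((C_1,C_2,C_3)^{\rho\sigma})$ is symmetric in $C_1,C_2,C_3$, hence lies in $k(B_1,B_2,B_3)$, and summing over the cosets recovers $s$. From the explicit formulas for $\tau_{12},\tau_{23},\tau_{45},\tau_{56}$ together with the block-swap generator of $\Z_2$ — a cross-ratio renormalization, whose coordinate functions again have denominators built only from $C_i$, $C_i-1$, $C_i-C_j$ — the denominator of each such symmetric sum is a product of the factors $C_i$, $C_i-1$, $C_i-C_j$. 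Since $\w R(1,B_1,B_2,B_3)=\pm\prod_i C_i(C_i-1)$ and $\w D(1,B_1,B_2,B_3)=\prod_{i<j}(C_i-C_j)^2$, every such factor divides $\w R\,\w D$, so in the polynomial ring $k[B_1,B_2,B_3]$ the denominator divides a power of $\w R\,\w D$. Hence $s\in k[B_1,B_2,B_3,\w R^{-1},\w D^{-1}]$.

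Finally, since $L=M$ by Theorem~\ref{thm5}, the inverse image of $s$ in $L$ is a $GL_2(k)\times\Gamma$-invariant rational function on $V_3\oplus V_3$ that is defined at every point of $\bar\cB$ (as $\w R$ and $\w D$ do not vanish there), and therefore, by its invariance together with Lemma~\ref{lem13}, at every point of $\bar\cA$. Consequently it lies in $k[\bar\cA]^{GL_2(k)\times\Gamma}=\mathcal S=k[V_1,\dots,V_6]$ by Theorem~\ref{thm6}. But the hypothesis that $V_1,\dots,V_6$ take equal values on the two pairs forces $s$ to take the same value on ${\bf a}$ and ${\bf b}$, contradicting $s({\bf a})=0$ and $s({\bf b})=1$. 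This contradiction shows the two pairs are $PGL_2(k)$-conjugate and completes the proof.
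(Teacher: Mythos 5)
Your proposal is correct and is essentially the paper's own argument: the paper proves Corollary~\ref{cor2} only implicitly, via the proof it attaches to Corollary~\ref{cor3} (reduction to $\bar{\cB}$, the $S_3\wr\Z_2$-averaged separating polynomial, the denominator analysis giving membership in $k[B_1,B_2,B_3,\w{R}^{-1},\w{D}^{-1}]$, and the identification $k[\bar{\cA}]^{GL_2(k)\times\Gamma}=\mathcal S=k[V_1,\dots,V_6]$ from Theorems~\ref{thm5} and~\ref{thm6}), and that is exactly what you reproduce, together with the dictionary between unordered pairs of disjoint $3$-sets modulo $PGL_2(k)$ and points of $\bar{\cA}$ modulo $GL_2(k)\times\Gamma$. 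One minor slip: independent rescalings $(f,g)\mapsto(c_1f,c_2g)$ are not absorbed by $\Gamma_0$ alone (it only provides $(cf,c^{-1}g)$); one needs $\Gamma_0$ together with the scalar matrices of $GL_2(k)$, as the paper notes in the proof of Lemma~\ref{lem13}.
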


\begin{cor}\label{cor3} Two pairs $(f_1, f_2)$, $(g_1, g_2) \in V_3 \bigoplus V_3$ with
$R(f_1, f_2)\cdot D(f_1, f_2) \neq 0$ and $R(g_1, g_2) \cdot D(g_1, g_2) \neq 0$ are $GL_2(k) \times
\Gamma$-conjugate if and only if there exists an $r \neq 0$ in $k$ such that
\begin{equation}\label{eq_4.2}
\begin{split}
 H(f_1, f_2) = r^2 H(g_1, g_2)\\
I(f_1, f_2) = r^4 I(g_1, g_2)\\
 R(f_1, f_2) = r^6 R(g_1, g_2)\\
 D(f_1, f_2) = r^8 D(g_1, g_2) \\
\end{split}
\end{equation}

\end{cor}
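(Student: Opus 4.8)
The plan is to prove Corollary \ref{cor3} by reducing the $GL_2(k) \times \Gamma$-conjugacy question to the equality of the generators $V_1, \dots, V_6$ of the invariant ring, which we have already characterized in Theorem \ref{thm6}. The ``only if'' direction is immediate: if $(f_1,f_2)$ and $(g_1,g_2)$ are conjugate under $GL_2(k) \times \Gamma$, then applying the scaling element $g(\lambda) \in GL_2(k)$ with determinant $\lambda$ multiplies the degree-$s$ invariant $H, I, R, D$ by $\lambda^{2}, \lambda^{4}, \lambda^{6}, \lambda^{8}$ respectively (since these have degrees $2, 4, 6, 8$ as polynomials in the coefficients), which yields Eq.~\eqref{eq_4.2} with $r = \lambda$. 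The substance lies in the converse.

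\textbf{The converse direction.} First I would observe that, by dividing through, the system \eqref{eq_4.2} forces every monomial in $H, I, R, D$ that has \emph{total weighted degree zero} (in the sense that the $r$-powers cancel) to take equal values on the two pairs. Concretely, each $V_i$ is a quotient of monomials in $H, I, R, D$ chosen so that the numerator and denominator have the same degree in the coefficients $A_0, \dots, B_3$; this is exactly the condition $a + 2b = 3c + 4d$ appearing in Lemma \ref{lem11}. Therefore the scaling factor $r$ cancels in each $V_i$, and Eq.~\eqref{eq_4.2} implies
\begin{equation*}
V_i(f_1, f_2) = V_i(g_1, g_2), \qquad i = 1, \dots, 6.
\end{equation*}
By Theorem \ref{thm6}, the $V_i$ generate the full ring of $GL_2(k) \times \Gamma$-invariants in $k[A_0, \dots, B_3, R^{-1}, D^{-1}]$, so this says that \emph{every} invariant regular function on the locus $\{ RD \neq 0 \}$ agrees on the two pairs.

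\textbf{Separating the orbits.} The final step is to promote equality of all invariants into actual conjugacy. This is where the geometric reductivity of $GL_2(k) \times \Gamma$ is needed: because the group is reductive and the categorical quotient separates closed orbits, two points on which every invariant agrees lie in the same fiber of the quotient map. The hypothesis $R \cdot D \neq 0$ guarantees (via Lemma \ref{lem10}(i)) that neither pair lies in the null cone, and in fact that each orbit is closed in the stable locus $\{ RD \neq 0 \}$, so the fibers of the quotient over this locus are single orbits. Hence equality of all the $V_i$ forces $(f_1, f_2)$ and $(g_1, g_2)$ into the same $GL_2(k) \times \Gamma$-orbit, which is the desired conclusion. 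Alternatively, and more in the spirit of the sextic case treated in Corollary \ref{cor1}, one can argue directly: after normalizing both pairs into $\bar{\mathcal B}$ using $GL_2(k) \times \Gamma$, the residual ambiguity is exactly the $S_3 \wr \Z_2$ action on $\mathcal C$, and one separates distinct $S_3 \wr \Z_2$-orbits by an averaged invariant function $s$ built from a separating polynomial, showing that if the orbits differed some $V_i$ would take different values---contradicting the equality just established.

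\textbf{Main obstacle.} The technical heart of the argument is the orbit-separation step. The invariants $V_i$ are defined only away from $RD = 0$, so one must be careful that this localization does not lose the ability to distinguish orbits; the condition $R \cdot D \neq 0$ is precisely what makes the orbits closed and the invariants separating there. I expect the cleanest route is the explicit ``averaging'' argument mirroring Corollary \ref{cor1}: reduce to $\bar{\mathcal B}$, use that $\mathcal C / (S_3 \wr \Z_2) \cong \bar{\mathcal B}/(GL_2(k) \times \Gamma)$, and separate finite $S_3 \wr \Z_2$-orbits by a symmetrized rational function whose denominator divides a power of $J_{10}$ (equivalently, of $\w R \, \w D$), thereby landing in $\mathcal S = k[V_1, \dots, V_6]$ and forcing the contradiction.
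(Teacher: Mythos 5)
Your proposal is correct, and the route you yourself single out as the cleanest is essentially the paper's own proof: observe that the degree matching $a+2b=3c+4d$ makes the $r$-powers cancel so Eq.~\eqref{eq_4.2} forces $V_i(f_1,f_2)=V_i(g_1,g_2)$ for all $i$, normalize both pairs into $\bar{\mathcal B}$, and if the pairs were not conjugate, separate the two (distinct, finite) $S_3 \wr \mathbb{Z}_2$-orbits on $\mathcal C$ by a polynomial averaged over the group, whose symmetrization has denominator dividing a power of $J_{10}$ and hence, via the isomorphism $L \cong M$ of Theorem \ref{thm5} and the identification $\mathcal S = k[V_1,\dots,V_6]$ of Theorem \ref{thm6}, defines an element of $\mathcal S$ taking different values on the two pairs --- contradicting the equality of the $V_i$. (Your alternative GIT argument via closed orbits and geometric reductivity is not the paper's route and its orbit-closedness claim is left unjustified, but since you do not rely on it, the proposal stands.)
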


\begin{proof}
The only if part is clear.  Now assume Eq.~\eqref{eq_4.2} holds. We can assume 
\[ f_1 = g_1  = XY(X-Y),\]
$f_2$ and $g_2$ equals $(X-\alpha_1 Y)(X-\alpha_2 Y)(X-\alpha_3  Y)$ and  $(X-\beta_1 Y)(X-\beta_2  Y)(X-\beta_3 Y)$ respectively. This is  because  every element  in  $\cA$ is  $GL_2(k) \times \Gamma$-conjugate to an element  in $\cB$. Suppose they are not $GL_2(k)  \times \Gamma$-conjugate. Then  ${\bf \alpha}  := (\alpha_1, \alpha_2, \alpha_3)$ and ${\bf \beta} := (\beta_1, \beta_2, \beta_3)$ belong to different $S_3 \wr \Z  _2 $ orbits on $\mathcal C$ and these orbits  are  finite  subsets  of  $k ^3$.  Therefore  there  exists  a polynomial \, \,  $p (C_1, C_2, C_3)$ such that for all $  \tau \in S_3 \wr \Z _2  $, we have $  p({\bf \alpha}^ \tau)  =0 $ and $ p({\bf \beta}^ \tau) = 1 $. Consider the element $t \in k\, [\mathcal C] $ given as
\[ t= \frac{1}{|(S_3 \wr \Z _2|}  \sum_ {\substack {\tau \in S_3 \wr \Z _2}} p({(C_1, C_2, C_3)}^ \tau)\]
Clearly $t \in M$. As in  the proof of Corollary \ref{cor1}, we have
\[ t \in k\, [B_1, B_2, B_3, J_{10}^{-1}] = k\, [B_1, B_2, B_3, R^{-1}, D^{-1}].\]

Since
\[ L = k(A_0,  \dots  ,   A_3, B_0,  \dots  ,   B_3)^{GL_2(k) \times \Gamma} \cong k(C_1, C_2, C_3)^{S_3 \wr\Z _2} = M\]
by Theorem \ref{thm5}, the inverse image of $t$ in $L$ is a rational function in $A_0,  \dots , A_3$, $B_0,  \dots  ,   B_3$ which is defined at each point of $\bar \cB$ by the previous paragraph. Thus it is defined at each point of $\bar \cA$ because it is a $GL_2(k) \times \Gamma$-invariant. Therefore it lies in
\[ k\, [\bar \cA]^{GL_2(k) \times \Gamma} = k\, [A_0, \dots , A_3, B_0,  \dots , B_3, R^{-1}, D^{-1}]^{GL_2(k) \times \Gamma} = \mathcal S.\]
But $S = k\, [V_1,  \dots  ,   V_6]$ by Theorem~\ref{thm6}. On the other hand Eq.~\eqref{eq_4.2} implies each $V_i$ takes the same value on $(f_1, f_2)$ and $(g_1, g_2)$. This implies $t$ takes the same value on ${\bf \alpha}$ and ${\bf \beta}$, 
contradicting $t({\bf \alpha}) = 0$ and $t({\bf \beta}) = 1$. This proves the claim.

\end{proof}

\end{document}